\documentclass[11pt,reqno]{amsproc}

\usepackage[sc]{mathpazo}\renewcommand{\mathbf}{\mathbold}
\normalfont
\usepackage[T1]{fontenc}
\usepackage[ttdefault=true]{AnonymousPro}

\usepackage[margin=0.9in]{geometry}
\usepackage{comment}
\usepackage{scalerel,nicefrac}
\usepackage[all,cmtip]{xy}
\usepackage{longtable}
\usepackage{dirtytalk}
\usepackage[dvipsnames]{xcolor}

\usepackage{rotating, setspace}

\makeatletter
\def\l@subsection{\@tocline{2}{0pt}{2.5pc}{5pc}{}}
\def\l@subsubsection{\@tocline{2}{0pt}{5pc}{7.5pc}{}}

\makeatother

\usepackage{bbm}
\usepackage{listings}
\usepackage{mathtools,amssymb}
\usepackage{bookmark}
\usepackage{tikz-cd}
\usepackage{subfigure}
\usepackage{shuffle}
\usepackage{mathrsfs}
\usepackage{multicol}
\usepackage{booktabs}
\usepackage{makecell}
\usepackage{mleftright}
\mleftright

\makeatletter
\renewcommand{\@secnumfont}{\bfseries}
\makeatother

\hypersetup{
    colorlinks=true,
    linkcolor=BrickRed,
    citecolor=RoyalBlue,
    filecolor=blue,      
    }

\numberwithin{equation}{section}
\usepackage{cite}

\newcommand{\ps}[1]{\mkern-.25mu\mathbin{\left(\mkern-3.5mu\left({#1}\right)\mkern-3.5mu\right)}}

\newcommand{\N}{\mathbb{N}}

\newcommand{\R}{\mathbb{R}}
\newcommand{\C}{\mathbb{C}}

\newcommand{\E}{\mathbb{E}}

\newcommand{\ba}{\mathbf{a}}
\newcommand{\bb}{\mathbf{b}}
\newcommand{\bp}{\mathbf{p}}

\newcommand{\bc}{\mathbf{c}}
\newcommand{\bG}{\mathbf{G}}
\newcommand{\bH}{\mathbf{H}}
\newcommand{\bw}{\mathbf{w}}
\newcommand{\bq}{\mathbf{q}}
\newcommand{\balpha}{{\boldsymbol{\alpha}}}

\newcommand{\bzero}{\mathbf{0}}
\newcommand{\bX}{{\boldsymbol{X}}}
\newcommand{\dif}{\mathrm{d}}

\newcommand{\tbp}{\widetilde{\bp}}
\newcommand{\tmu}{\widetilde{\mu}}

\newcommand{\tG}{\widetilde{G}}

\newcommand{\hW}{\widehat{W}}
\newcommand{\hL}{\widehat{L}}

\newcommand{\cH}{\mathcal{H}}
\newcommand{\cN}{\mathcal{N}}
\newcommand{\cL}{\mathcal{L}}
\newcommand{\cM}{\mathcal{M}}
\newcommand{\cB}{\mathcal{B}}
\newcommand{\cA}{\mathcal{A}}

\newcommand{\cD}{\mathcal{D}}

\newcommand{\fM}{\mathfrak{M}}

\newcommand{\Sh}{\text{Sh}}
\newcommand{\tr}{\intercal}
\newcommand{\Mat}{\operatorname{Mat}}
\newcommand{\Sym}{{\operatorname{Sym}}}
\newcommand{\rank}{\operatorname{rank}}
\newcommand{\im}{\operatorname{im}}
\newcommand{\andd}{\quad \text{and} \quad}
\newcommand*{\pmat}[1]{\begin{pmatrix}#1\end{pmatrix}}

\newcommand{\qshuffle}{\mathbin{\widehat{\shuffle}}}
\def\zero{\texttt{0}}
\newcommand{\one}{\mathbbm{1}}
\newcommand{\tone}{\texttt{1}}

\newcommand{\sbas}{w}

\newcommand{\sbasvec}{\bw}

\newcommand{\gvec}{W}

\newcommand{\tdeg}{\text{tdeg}}
\newcommand{\sdeg}{\text{sdeg}}
\newcommand{\gr}{{\operatorname{gr}}}

\newcommand{\bone}{\mathbf{1}}

\newcommand{\eps}{\varepsilon}
\newcommand{\mfu}{\mathfrak{u}}
\newcommand{\scal}[1]{\langle #1 \rangle}
\newcommand{\mrd}{\mathop{}\!\mathrm{d}}

\usepackage[thmtools-compat]{keytheorems}
\declaretheorem[numberwithin=section, style=remark]{remark}
\declaretheorem[sibling=remark]{conjecture, proposition, theorem,%
lemma, corollary}

\declaretheorem[sibling=remark, style=definition]{definition, example}

\usepackage[nameinlink]{cleveref}
\AddToHook{cmd/appendix/before}{\crefalias{section}{appendix}}

\begin{document}
\title{Orthogonal polynomials on path-space}
\date{February 21, 2026}

\author{Ilya Chevyrev}
\address{SISSA (International School for Advanced Studies), via Bonomea 265, 34136 Trieste, Italy}
\email{ichevyrev@gmail.com}

\author{Emilio Ferrucci}
\address{SISSA (International School for Advanced Studies), via Bonomea 265, 34136 Trieste, Italy}
\email{emilio.ferrucci@sissa.it}

\author{Darrick Lee}
\address{School of Mathematics and Maxwell Institute, University of Edinburgh, Edinburgh EH9 3FD, Scotland}
\email{darrick.lee@ed.ac.uk}

\author{\\ Terry Lyons}
\address{Mathematical Institute, University of Oxford. Woodstock Rd, Oxford OX2 6GG UK.}
\address{Department of Mathematics,
Imperial College London,
180 Queen's Gate,
London, SW7 2AZ
UK}

\email{terry.lyons@maths.ox.ac.uk}

\author{Harald Oberhauser}
\address{Mathematical Institute, University of Oxford. Woodstock Rd, Oxford OX2 6GG UK.}
\email{oberhauser@maths.ox.ac.uk}

\author{Nikolas Tapia}
\address{Weierstrass Institute, Anton-Wilhelm-Amo-Str. 39, 10117, Berlin, Germany.}
\address{Institut für Mathematik, Humboldt-Universität zu Berlin, Rudower Chaussee 25, 12489 Berlin, Germany.}
\email{tapia@wias-berlin.de}

\begin{abstract}
We consider the orthogonalisation of the signature of a stochastic process as the analogue of orthogonal polynomials on path-space. Under an infinite radius of convergence assumption, we prove density of linear functions on the signature in $L^p$ functions on grouplike elements, making it possible to represent a square-integrable function on (rough) paths as an $L^2$-convergent series. By viewing the shuffle algebra as commutative polynomials on the free Lie algebra, we revisit much of the theory of classical orthogonal polynomials in several variables, such as the recurrence relation and Favard's theorem. Finally, we restrict our attention to the case of Brownian motion with and without drift, and prove that dimension-independent orthogonal signature exists with drift but not without. We end with numerical examples of how orthogonal signature polynomials of Brownian motion can be applied for the approximation of functions on paths sampled from the Wiener measure.
\end{abstract}

\maketitle
\vspace{-10pt}
\begin{center}
\begin{small}
    February 21, 2026
\end{small}
\end{center}

{\footnotesize
\tableofcontents 
}

\section{Introduction}\label{sec:intro}
Orthogonal polynomials in one or several variables have applications in numerical analysis \cite{trefethen},
mathematical physics \cite{MQM}, probability theory and stochastic processes \cite{xiu, schoutens}, mathematical finance
\cite{MR4067070}, and in many other areas of mathematics and the applied sciences. One simple example that exhibits
their potential is the following. Take a function $f \colon [-1,1] \to \mathbb R$. If $f$ is analytic at $0$, we may
approximate it with its Taylor polynomials near $0$. Outside its radius of convergence, however, the Taylor
approximation will not converge, and even within, it may converge slowly. On the other hand, as long as $f \in
L^2[-1,1]$, its $L^2$-projections onto the space spanned by polynomials of degree $N$, $\Pi_N f$, will converge to $f$ in $L^2$
as $N \to \infty$ (and even uniformly if $f$ has H\"older regularity greater than $\frac 12$ \cite{sax}).
These projections can be found computing the $L^2$ inner product $(\ell_n, f) = \int_{-1}^1 f(x) \ell_n(x) \dif x$ with the Legendre polynomials, orthogonal w.r.t.\ the Lebesgue measure on $[-1,1]$ and expanding 
\begin{equation}\label{eq:legendre}
f = \sum_{n = 0}^\infty \frac{(\ell_n, f)}{(\ell_n, \ell_n)} \ell_n.
\end{equation}
The series truncated at $N$ coincides with $\Pi_Nf$. A comparison of the two types of approximation is given in \Cref{fig:approximations}.
\begin{figure}[htbp]
    \centering
    \includegraphics[width=\textwidth]{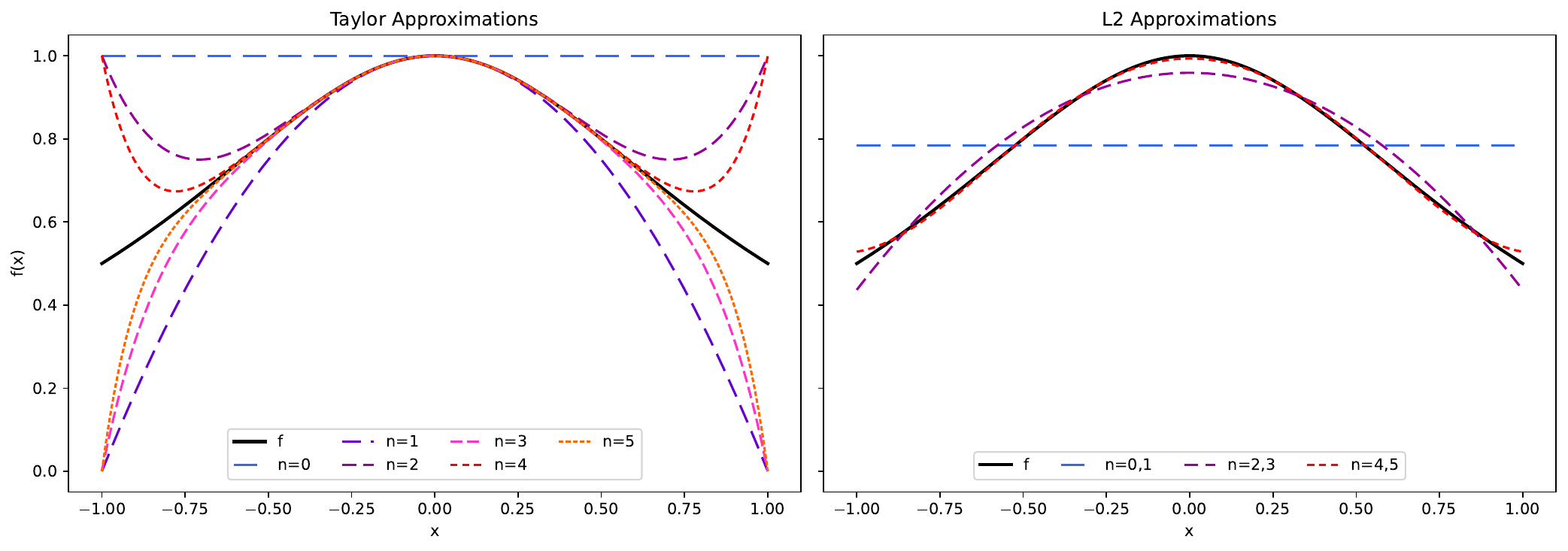}
    \vspace{-4ex}
    \caption{Taylor and $L^2$ approximations (computed with Legendre orthogonal polynomials) of $f(x) = \frac{1}{1 + x^2}$. Since $f$ has poles at $\pm i$, the Taylor approximations do not converge uniformly on $(-1,1)$.}
    \label{fig:approximations}
\end{figure}

Since the work of Chen \cite{chen_iterated_1954}, iterated path integrals
\[
S^n(X)_{0,T} \coloneqq \int_{0 < t_1 < \ldots < t_n < T} \dif X_{t_1} \otimes \cdots \otimes \dif X_{t_n}, \qquad X \colon [0,T] \to \mathbb R^d
\]
have established themselves as the path-space analogue of polynomials. For rougher signals they are ill-defined from analysis alone, but in some cases can be defined probabilistically; specifying the first few subject to certain requirements forms the definition of a
\emph{rough path} \cite{Lyo98}, i.e., a specification of a theory of differential equations. The full stack of iterated integrals, its \emph{signature} $S(X)_{0,T}$, has been shown to characterise the path up to \emph{tree-like equivalence}, a generalised form of reparametrisation which allows \say{retracings}, see \cite{chen_integration_1958, HL10, siguniqueness} for the proof of this statement in the case of $X$ piecewise regular, bounded variation, and rough, respectively. Moreover, since coordinate functions on the signature form an algebra under the shuffle product (see \eqref{eq:shuffle} below), the Stone-Weierstrass theorem guarantees that, roughly speaking, for any continuous function on a compact set of (rough) paths $K$ and any $\varepsilon > 0$ there exists a truncation level $N$ and a linear function $\lambda_N$ of $S_N(X)_{0,T}$ (the signature truncated at degree $N$, see \eqref{eq:sig} below), such that 
\begin{equation}\label{eq:SW}
\sup_{X \in K}|F(X) - \langle \lambda_N, S_N(X)_{0,T} \rangle| < \varepsilon.
\end{equation}
This result proves that functions on paths can be approximately linearised in terms of the signature, but has a couple of shortcomings. First of all, most sets of paths on which interesting functions are defined (e.g.\ the support of the law on path-space of the solution to an SDE \cite{SV72}, or indeed the support of the law of Brownian motion itself) are not compact. Secondly, writing $\lambda_N = \sum_{k = 0}^N \lambda_N^k$ with $\lambda_N^k$ acting on tensor degree $k$, it cannot be expected that a linear function $\lambda_{N'}$ ($N' > N$) that achieves a better approximation ($\varepsilon' < \varepsilon$) should have the same low-order tensor projections: in general $\lambda_N^k \neq \lambda_{N'}^k$ for $k \leq N$, i.e.\ the approximation is not stable under increasing the precision of the approximation.
This latter phenomenon is not specific to approximations of the type \eqref{eq:SW} for signatures and even occurs for $L^2$-approximations of functions in one variable with polynomials: if $f \in L^2[-1,1]$ as above, writing $(\Pi_N f)(x) = \sum_{k = 0}^N \lambda^k_N x^k$, the same instability occurs, simply because monomials are correlated w.r.t.\ the Lebesgue measure on $[-1,1]$ (or indeed w.r.t.\ any measure that is not the Dirac delta $\delta_0$).

Iterated integrals also naturally appear in numerical schemes for stochastic differential equations (SDEs) \cite{KP10}, see also \cite{LO14, dupire2023functional} for similar schemes which deal with path-dependency. These approximations are quite different to \eqref{eq:SW}, since they generally rely on the interval $[0,T]$ being split into many sub-intervals, and on the expansion to be performed on each interval, for convergence to be guaranteed. More specifically, an important example of function on paths is $\varphi(Y_T)$, where $\varphi$ is a smooth function and $Y$ is the solution to the controlled differential equation with smooth vector fields $F_1,\ldots, F_d \in C^\infty(\mathbb R^n, \mathbb R^n)$:
\begin{equation}\label{eq:RDEs}
\dif Y_t = F(Y_t) \dif \bX_t, \qquad Y_0 = y_0 .
\end{equation}
Applying the chain rule $N$ times yields the expansion
\begin{equation}\label{eq:taylor}
\begin{split}
\varphi(Y_T) &= \varphi(y_0) + \sum_{n = 1}^N \langle F_{\alpha_1} \cdots F_{\alpha_n}\varphi(y_0), S(X)_{0,T}^{\alpha_1\ldots\alpha_n} \rangle + R_N(F, \varphi, X)_{0,T} \\
\text{with}\quad R_N(F, \varphi, \bX)_{0,T} &= \int_{0 < t_1 < \ldots < t_N < T} F_{\alpha_1} \cdots F_{\alpha_N}\varphi(Y_{t_1}) \dif X_{t_1}^{\alpha_1} \cdots \dif X_{t_N}^{\alpha_N} .
\end{split}
\end{equation}
Failures of analyticity such as \Cref{fig:approximations}, which a fortiori occur in the path setting (since iterated
integrals embed monomials) show that this error cannot generally be made small without controlling the size of the interval. This type of approximation scheme therefore has the benefit of having explicit identities for the coefficients in the expansion, but the drawbacks of requiring a more explicit form of the function on paths (e.g.\ the vector fields in the differential equation) and of requiring iteration for convergence.

Briefly returning to a polynomial $L^2$-approximations for functions in one variable, observe that a necessary condition for a representation like \eqref{eq:legendre} to hold is that polynomials be dense in $L^2$. This is always true if the measure is compactly supported, but for general measures on the real line the question of density of polynomials in $L^p$ is intimately related to that of moment-determinacy \cite{BC81}, i.e.\ whether the measure is unique given its moments $\mathbb E_\mu x^n$. On path-space, the analogous question asks whether the \emph{expected signature} $\mathbb E S(X)_{0,T}$ of a stochastic process determines its law. The question was answered in the affirmative in \cite{characteristic} under an \say{infinite radius of convergence} assumption, which holds true for many stochastic processes lifted to rough paths considered in the literature \cite{FV10}.

The goal of this paper is to combine orthogonal polynomials with path signatures, forming what we consider the analogue
of orthogonal polynomials on path-space. Our contributions are as follows. In \Cref{sec:L2} we briefly introduce the signature of a stochastic process and show how its canonical coordinates can be (block-)orthogonalised with the familiar Gram-Schmidt procedure, given the knowledge of the expected signature of the process. Next, we prove that, under the same infinite radius of convergence assumption as in \cite{characteristic},
linear functions on the signature are dense in $L^p$ functions on paths (modulo tree-like equivalence) for any $p\in [1,\infty)$.
While this result holds under the same hypotheses as in \cite{characteristic}, it does not follow directly from the latter.
Combined with orthogonalisation, our density result yields an $L^2$-series expansion for a square integrable function on paths $F$. This circumvents both the issues of poor convergence away from the point of expansion of \eqref{eq:taylor} and the issues of stability under increase in precision of \eqref{eq:SW}.
We remark that $L^p$-density of signature functionals was also very recently proved in \cite{lpdensity} using weighted spaces for a class of time-augmented processes. Our density result is, however, stronger\footnote{More specifically,
\cite{lpdensity} requires time-augmentation and integrability of the weight $w(\bX) = \exp(\beta\|\bX\|_{\alpha}^N)$ for some $\beta>0$,
where $N=\lfloor 1/\alpha \rfloor$ and $\|\bX\|_{\alpha}$ is the $\alpha$-H\"older homogeneous rough path norm.
Integrability of $w$ (for any $\beta>0$) implies an infinite radius of convergence of $\E S(\bX)_{0,T}$,
so our \Cref{thm:density} implies \cite[Thm.~3.4]{lpdensity}, but not conversely since we do not require time augmentation and do not assume integrability of $w$.
For example, \Cref{thm:density} applies to fractional Brownian motion with $1/4<H\leq1/3$ while \cite[Thm.~3.4]{lpdensity} does not because $w$ is not integrability as in this case $N=3$.
Moreover, the restriction to time augmented processes seems important in \cite{lpdensity} since the proof relies on
the uniform density result in weighted spaces from \cite{cuchiero_weighted}, which appears to use time augmentation in a crucial way.} and applies without any assumptions beyond an infinite radius of convergence.
Our proof is moreover very different from \cite{lpdensity} and follows more the style of the proof of moment determinacy \cite{characteristic}.

In \Cref{sec:orthPol} we begin by observing the known fact that a change of basis transforms the shuffle algebra, which governs the product of coordinate functions on the signature, into the polynomial algebra over an infinite-dimensional but graded space (the free Lie algebra). This enables us to revisit much of the theory of classical orthogonal polynomials in several variables \cite{dunkl_orthogonal_2014}, with the added difficulty of infinite dimensionality and anisotropic grading on the base variables. We prove versions of the celebrated three-term recurrence relation, which is no longer three-term but nevertheless yields a way of computing the orthogonal signature polynomials alternatively to Gram-Schmidt. We also prove a version of Favard's theorem, which characterises which sequences of polynomials arise as orthogonal polynomials with respect to an inner product. We then study when such inner products arise from probability measures on both the free Lie algebra and path space. 

In \Cref{sec:brownian} we specify our study to what is arguably the most important example of stochastic process, Brownian motion, seeking more explicit descriptions of the orthogonal signature polynomials. We ask the question of \emph{naturality}, i.e.\ roughly speaking whether the orthogonal polynomials associated to a $V$-valued brownian motion can be expressed without reference to a basis or even to the dimension of $V$. While this is true of ordinary Hermite polynomials in several variables, we prove, using symbolic linear algebra code that it is not true for the signature of Brownian motion, unless time is included as a coordinate. This concern is not merely motivated by aesthetics, rather, it speeds up the computation of the orthogonal signature compared to naively performing Gram-Schmidt orthogonalisation. We proceed with the explicit description and computation of orthogonal signature polynomials of time-augmented Brownian motion which are natural (basis- and dimension- free). We also give an independent, short, Wiener chaos-based proof of density of linear functions on the signature in $L^2$ in the time-augmented case. We end with a description of our implementation \cite{orthsig} of these orthogonal polynomials on Wiener space and show how it can be used in approximation problems.

We believe that, in future work, it would be interesting to consider the orthogonal signature associated to the time-augmented stochastic processes (many of which with jumps) associated to the Askey classification \cite{AA85}, see \cite{schoutens}. \medskip

{\footnotesize
\textbf{Acknowledgements.} We thank Cris Salvi for many helpful discussions on the topics of this paper.

IC and EF gratefully acknowledge support from the ERC via the Starting Grant SQGT 101116964.
During the first phase of this project, EF was employed at the University of Oxford and supported by the EPSRC programme grant [EP/S026347/1].
DL was supported by the Hong Kong Innovation and Technology Commission (InnoHK Project CIMDA) during part of this work. 
TL was supported in part by UK Research and Innovation (UKRI) through the Engineering and Physical Sciences Research Council (EPSRC) via Programme Grants [Grant No. UKRI1010: High order mathematical and computational infrastructure for streamed data that enhance contemporary generative and large language models], [Grant No. EP/S026347/1: Unparameterised multi-model data, high order signatures and the mathematics of data science], and the UKRI AI for Science award [Grant No. UKRI2385: Creating Foundational Benchmarks for AI in Physical and Biological Complexity]. He was also supported by The Alan Turing Institute under the Defence and Security Programme (funded by the UK Government) and through the provision of research facilities; by the UK Government; and through CIMDA@Oxford, part of the AIR@InnoHK initiative funded by the Innovation and Technology Commission, HKSAR Government.
HO was supported by [EP/Y028872/1] (An "Erlangen Programme" for AI) and by the Hong Kong Innovation and Technology Commission (InnoHK Project CIMDA).
NT acknowledges funding by the Deutsche Forschungsgemeinschaft (DFG, German Research Foundation) – CRC/TRR 388 ``Rough
Analysis, Stochastic Dynamics and Related Fields'' – Project ID 516748464.

For the purpose of open access, the authors have applied a Creative Commons Attribution (CC BY) license to any author accepted manuscript version arising from this submission.
}

\section{\texorpdfstring{$L^2$}{L²} orthogonal signature expansions}\label{sec:L2}

\subsection{Signature of a stochastic process}


Let $X$ be a continuous stochastic process in a time variable on $[0,T]$ taking values in the finite-dimensional vector space $V$, defined on a probability space $(\Omega, \mathcal F, \mathbb P)$.
Let $X$ be of bounded $p$-variation and let $\bX$ be a $p$-geometric rough path lift of $X$, which we assume to be $\mathcal F$-measurable (this is the case for all the main examples of stochastic rough paths). For a finite-dimensional vector space $U$ we denote
\begin{align}
    T_N(U) \coloneqq \bigoplus_{m=0}^N U^{\otimes m}, \qquad T(U) \coloneqq \bigoplus_{m=0}^\infty U^{\otimes m}, \qquad T\ps{U} \coloneqq \prod_{m=0}^\infty U^{\otimes m}.
\end{align}
We recall that, given the rough path lift $\bX$, its \emph{signature} is canonically defined \cite{Lyo98}, and represents the iterated integrals
\begin{equation}\label{eq:sig}
\begin{split}
S^n(\bX)_{0,T} &= \int_{0 < t_1 \ldots < t_n < T} \dif X_{t_1} \otimes \cdots \otimes \dif X_{t_n} \in V^{\otimes n} \\
S_N(\bX)_{0,T} &= (S^0(\bX)_{0,T}, \ldots, S^N(\bX)_{0,T}) \in T_N(V) \\
S(\bX)_{0,T} &= (S^0(\bX)_{0,T}, \ldots, S^N(\bX)_{0,T},\ldots) \in T\ps{V}
\end{split}
\end{equation}
defined according to the integration theory specified by $\bX$. We write $\langle \, \cdot \, , \, \cdot \, \rangle$ for the canonical dual pairing between $T(V^*)$ and $T\ps{V}$, e.g.\ for a word $w = \alpha_1 \ldots \alpha_n$ representing an elementary tensor in $(V^*)^{\otimes n}$
we have
\[
\langle w , S(\bX)_{0,T} \rangle \coloneqq \int_{0 < t_1 \ldots < t_n < T} \dif X_{t_1}^{\alpha_1} \otimes \cdots \otimes \dif X_{t_n}^{\alpha_n}.
\]
We write $|w| = n$ for the tensor degree of $w$. We recall that signature coordinates satisfy the \emph{shuffle identity}, i.e.\ for $u, v \in T(V^*)$
\begin{equation}\label{eq:shuffle}
\langle u, S(\bX) \rangle\langle v, S(\bX) \rangle = \langle u \shuffle v, S(\bX) \rangle.
\end{equation}
where $\shuffle$ is the shuffle product of the two words. For this reason we sometimes will call $\mathrm{Sh}(V^*) = T(V^*)$ when viewed as an algebra under the shuffle product
and correspondingly write $\mathrm{Sh}_N(V^*) = T_N(V^*)$.
Elements of $T\ps{V}$ satisfying \eqref{eq:shuffle} form a (formal) group $G(V) \subset T\ps{V}$, whose Lie algebra is the space of free Lie series~\cite{reutenauer_free_1993}.
Recall that $A = (A^0,A^1,\ldots) \in T\ps{V}$ has \emph{an infinite radius of convergence} if
\begin{equation}\label{eq:infiniteRadius}
\|A\|_\lambda := \sum_{n = 0}^\infty \|A^n\|_{V^{\otimes n}} \lambda^n < \infty \quad \text{for all } \lambda > 0,
\end{equation}
where we equip $V$ with a norm and $V^{\otimes n}$ with an arbitrary cross norm, e.g. the projective norm (by finite dimensionality of $V$, condition \eqref{eq:infiniteRadius} does not depend on the choice of norms).
Denote $\widetilde T\ps{V}$ the space of these elements and
\[
\widetilde G(V) \coloneqq \widetilde T\ps{V} \cap G(V).
\]
We equip $\widetilde T\ps{V}$ with the (locally convex) topology given by the norms $\|\cdot\|_\lambda$,
and $\tG(V)$ inherits this topology, making it a topological group, and a Polish (completely metrisable and separable) space~\cite{characteristic}.

It is a fact that $S(\bX)_{0,T}$ is an element of $\tG(V)$ for any rough path $\bX$, see e.g. \cite{LCL}.
Therefore, we may view the stochastic process $X$ as defining a probability measure on $\tG(V)$. From now on, unless there is an ambiguity as to the rough path lift $\bX$, we will drop the bold font and simply write $S(X)_{0,T}$ and similar; we will always use $X$ to denote the trace of $\bX$, i.e.\ its projection onto $V$.

The signature encodes almost all the information in the (rough) path, namely up to tree-like equivalence \cite{chen_integration_1958, HL10, siguniqueness}, a generalised form of reparametrisation that includes \say{retracings}. A common way of ensuring the path is tree-reduced, and thus that the full path can be recovered from the signature, is to include time as a coordinate, see e.g.\ \cite{fermanian}.  We let $\mathcal G$ denote the sigma-algebra generated by $S(X)_{0,T}$. 
\begin{remark}[$\mathcal G = \mathcal F$]\label{rem:FG}
If time is included as zero-th coordinate of $X$, it follows from the fact that $\bX$ is tree-reduced (i.e.\ $t \mapsto \bX_{0,t}$ is injective) and measurability of the map that reconstructs the tree-reduced representative from the signature \cite{geng}, that $\mathcal G = \mathcal F$. In many cases of interest, such as Brownian motion \cite{treelikeBm} and the law of certain multidimensional diffusions \cite{treelikeDiffusion}, $\mathcal G = \mathcal F$ even without including time as a coordinate of the process. 
\end{remark}

\subsection{Orthogonalisation of signature features}

We assume that $\langle w, S(X)_{0,T}\rangle \in L^2(\mathcal G)$ for all $w \in \Sh(V^*)$. Define the positive semi-definite symmetric bilinear form
\begin{equation}\label{eq:innerProd}
(u, v) \coloneqq \mathbb E \langle u, S(X)_{0,T} \rangle \langle v,  S(X)_{0,T} \rangle = \langle u \shuffle v , \mathbb E S(X)_{0,T} \rangle .
\end{equation}
This bilinear form is, in general, only positive semi-definite because the natural map $\phi \colon \Sh_N(V^*) \to L^2(\mathcal G)$ is not in general an inclusion. Let $N$ be its nullspace and $W$ a choice of a direct complement to it, so that $\Sh(V^*) = N \oplus W$ and $\phi$ embeds $W$ into $L^2(\mathcal G)$. We may always choose $W$ to be of the form $W = \bigoplus_{n = 0}^\infty W^n$ with $W^n \subset (V^*)^{\otimes n}$:
indeed, let $\mathcal W^n$ be the pre-image through the quotient $\pi \colon \Sh(V^*) \twoheadrightarrow \Sh(V^*)/N$ of some basis of $\pi((V^*)^{\otimes n})$ and take $W^n = \mathrm{span}(\mathcal W^n)$. As usual call $\mathcal W_n = \bigsqcup_{k = 0}^n \mathcal W^k$ the basis of $W_n = \bigoplus_{k = 0}^n W^k$. Let
\begin{equation}\label{eq:PiN}
\begin{split}
\Pi_N \colon L^2(\mathcal G) &\twoheadrightarrow \mathrm{span}\{ \langle \ell, S(X)_{0,T} \rangle \mid \ell \in
\mathrm{Sh}_N(V^*) \} \\
\Pi^n \colon L^2(\mathcal G) &\twoheadrightarrow \mathrm{span}\{ \langle \ell, S(X)_{0,T} \rangle \mid \ell \in (V^*)^{\otimes n} \} \\
\mathbbm 1 - \Pi_N \eqqcolon \Pi_N^\bot \colon L^2(\mathcal G) &\twoheadrightarrow \mathrm{span}\{ \langle \ell,
S(X)_{0,T} \rangle \mid \ell \in \mathrm{Sh}_N(V^*) \}^\bot
\end{split}
\end{equation}
denote orthogonal projections, where $\bot$ denotes the orthogonal complement in $L^2(\mathcal G)$. We consider the \emph{block-orthogonalisation map} $p$
\begin{equation}\label{eq:p}
p \colon W \xrightarrow{\cong} W, \qquad p|_{W^n} = \Pi_{n-1}^\bot|_{W^n}, \qquad w \mapsto p_w.
\end{equation}
Notice that $p|_{W^n}$ takes values in $W_{n-1} \oplus W^n = W_n$ and therefore so does $p|_{W_n}$, i.e.\ $p$ is triangular, and moreover monic, namely $\Pi^n \circ p|_{W^n} = \mathbbm 1$. It can be computed by solving, for $w \in \mathcal W^n$ (and extended linearly to $W^n$)
\begin{equation}
p_w = w - \sum_{u \in \mathcal W_{n-1}} \lambda_u u, \qquad \Big( w - \sum_{u \in \mathcal W_{n-1}} \lambda_u u, v \Big) = 0 \quad \text{for } v \in \mathcal W_{n-1} .
\end{equation}
Let $\mathcal V$ be a basis of $V^*$, then the words $\alpha_1\ldots \alpha_n$ in $\mathcal V^n$ define a basis of $\mathrm{Sh}(V^*)$, and write $\mathcal V^\bullet \coloneqq \bigsqcup_{n \in \mathbb N}(\mathcal V^*)^{\otimes n}$. If the bilinear form
$(\, \cdot \, , \, \cdot \,)$ is positive-definite, then $W = \Sh(V^*)$; in many cases it is possible to realise $W^n$ as the linear span of a subset of $\mathcal V^\bullet$ (see \Cref{subsec:ito} below for an explicit example of this). Assume this is the case, and that  $\mathcal W^n$ is a subset of words in $\mathcal V^n$. We then call the elements $\{p_w\}_{w \in \mathcal W}$ the \emph{block-orthogonal shuffle polynomials} since $(p_v, p_w) = 0$ for $v, w \in \mathcal W$ with $|v| \neq |w|$.

Assume furthermore the basis $\mathcal W$ is totally ordered in a way that respects the grading ($v < w$ if $|v| < |w|$). We may then define a fully orthogonal basis by the Gram-Schmidt orthogonalisation procedure:
\begin{equation}\label{eq:ptilde}
\widetilde p_w \coloneqq w - \sum_{\substack{v \in \mathcal W^\bullet  \\ v < w \\ (\widetilde p_v,\widetilde p_v) > 0}} \frac{(w,\widetilde p_v)}{(\widetilde p_v,\widetilde p_v)}
\widetilde p_v
\end{equation}
where $v < w$ if $|v| < |w|$ or if $|v| = |w|$ and $v < w$ lexicographically. Then $\{\widetilde p_w\}_{w \in \mathcal W^n}$ is an orthogonal basis of $\mathrm{span}\{p_w \mid w \in \mathcal W^n\}$.

In the next subsection we will identify a large class of stochastic processes for which linear functions on $S(X)_{0,T}$ are dense in $L^p(\mathcal G)$. We view the next two results as corollaries of that result.

\begin{corollary}[Learning $F(X)$ by linear regression]
Let $Y = F(X) \in L^2(\mathcal G)$
and $\{(X^i, Y^i = F(X^i) + \varepsilon^i\}_{i = 1}^M$ be an i.i.d.\ sample of input-output pairs, where $\varepsilon^i$ are i.i.d.\ errors independent of $X$. Let $\Phi \in \mathbb R^{M \times D}$ be the data matrix with
\[
\Phi_{iv} = \langle p_v, S_N(X)_{0,T} \rangle, \qquad i = 1,\ldots, M; \ v \in \mathcal V^n
\]
with $n \leq N$ and $D = D(d, N) = \frac{d^{N+1} - 1}{d - 1}$ the dimension of $T_N(\mathbb R^d)$. Then the estimator for ordinary least squares (OLS)
\begin{equation}\label{eq:OLS}
\widehat \beta = (\Phi^\intercal \Phi )^{-1} \Phi^\intercal Y = \Phi^\intercal (\Phi\Phi^\intercal)^{-1} Y
\end{equation}
is such that $\langle \widehat \beta, S_N(X)_{0,T} \rangle$ converges in $L^2$ to $\Pi_N Y$
as $M \to \infty$. Its entries converge (in blocks) to $\Pi^n Y$, and are thus asymptotically stable under increasing degree $N$. Therefore, under the hypotheses of \Cref{thm:density} below, $Y$ can be estimated arbitrarily well by OLS linear regression on the truncated signature.
\end{corollary}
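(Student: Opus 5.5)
The plan is to treat the statement as a finite-dimensional least-squares consistency result, combined with the block-orthogonality of the $p_v$ and the density theorem. I would work with a basis $\mathcal W_N$ of a complement $W_N$, so that the features $\langle p_v, S_N(X)_{0,T}\rangle$, $v \in \mathcal W_N$, are linearly independent in $L^2(\mathcal G)$. If the feature set contains degenerate directions, I would either restrict to such a basis or read $(\Phi^\intercal\Phi)^{-1}$ as a pseudo-inverse. The feature vector $\phi(X) = (\langle p_v, S_N(X)_{0,T}\rangle)_v \in \mathbb R^D$ has population Gram matrix $G = \mathbb E[\phi(X)\phi(X)^\intercal]$ with entries $(p_u,p_v)$. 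This matrix is positive definite on the chosen basis, and it is block diagonal by construction of $p$, since $(p_u,p_v)=0$ whenever $|u|\neq|v|$.

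\textbf{Step 1.} Write $\widehat\beta = \big(\tfrac1M\Phi^\intercal\Phi\big)^{-1}\tfrac1M\Phi^\intercal Y$. By the strong law of large numbers, $\tfrac1M\Phi^\intercal\Phi \to G$ almost surely, which uses the standing assumption that signature coordinates lie in $L^2$. Likewise $\tfrac1M\Phi^\intercal Y \to \mathbb E[\phi(X)(F(X)+\varepsilon)] = \mathbb E[\phi(X)F(X)]$, because $\varepsilon$ is independent of $X$ and I assume it is centred and square integrable. For this I need $F(X)\in L^2$ together with $\phi\in L^2$, which gives integrability of the products. Since $G$ is invertible, the continuous mapping theorem gives $\widehat\beta \to \beta^* := G^{-1}\mathbb E[\phi(X) F(X)]$ almost surely.

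\textbf{Step 2.} Identify $\beta^*$. The normal equations say exactly that $\langle \beta^*, \phi(X)\rangle$ is the $L^2$-orthogonal projection of $F(X)$ onto $\mathrm{span}\{\langle \ell,S(X)_{0,T}\rangle : \ell\in \Sh_N(V^*)\}$, i.e.\ it equals $\Pi_N Y$. Here I use that the $p_v$ span the same space as $W_N$, which holds because $p$ is triangular and monic, and hence an isomorphism of $W_N$. Since $G$ is block diagonal, $G^{-1}$ is block diagonal as well. The degree-$n$ block of $\beta^*$ is therefore $G_{nn}^{-1}\mathbb E[\phi^n(X)F(X)]$, which depends only on $n$ and not on $N$. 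Moreover, $\langle \beta^{*,n},\phi^n\rangle$ is the projection onto $\mathrm{span}\{p_v : |v|=n\}$, which is the orthogonal increment $\Pi_n - \Pi_{n-1}$. This is what "entries converge in blocks" and asymptotic stability mean. $L^2$ convergence of $\langle\widehat\beta,\phi(X)\rangle$ then follows from $\|\langle\widehat\beta-\beta^*,\phi(X)\rangle\|_{L^2}\le |\widehat\beta-\beta^*|\,\|\phi\|_{L^2}$.

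\textbf{Step 3.} For the final claim, \Cref{thm:density} with $p=2$ implies $\Pi_N Y \to Y$ in $L^2(\mathcal G)$ as $N\to\infty$. Given $\epsilon>0$, I choose $N$ with $\|Y-\Pi_NY\|<\epsilon/2$ and then $M$ large enough using Step 1.

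The only delicate point is degeneracy: for finite $M$ the matrix $\Phi^\intercal\Phi$ may be singular, and $G$ may be singular if redundant words are included. I would handle this by working on the basis $\mathcal W_N$ and noting that invertibility holds eventually almost surely. The second expression $\Phi^\intercal(\Phi\Phi^\intercal)^{-1}Y$ is the minimum-norm solution, valid when $M\le D$. It agrees with the first one only when both are defined, which is irrelevant asymptotically.
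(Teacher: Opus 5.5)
Your proof is correct and is the standard OLS consistency argument, which the paper does not write out but defers to the statistical learning literature: the strong law of large numbers for $\frac1M\Phi^\intercal\Phi$ and $\frac1M\Phi^\intercal Y$, continuity of inversion at the invertible Gram matrix, the normal equations identifying the limit as $\Pi_N Y$, block-orthogonality of the $p_v$ for the stability claim, and \Cref{thm:density} with $p=2$ for the final claim. Your explicit assumption of centred errors, and your reading of the block limits as the increments $\Pi_n-\Pi_{n-1}$ (the projections onto $\mathrm{span}\{\langle p_v,S(X)_{0,T}\rangle : |v|=n\}$, which is what the identity $\Pi_N=\sum_{n\le N}\Pi^n$ in \Cref{cor:series} requires), are the right interpretations of the statement.
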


We refer to the standard literature on statistical learning (e.g.\ \cite[Ch.\ 3]{bach}) for more general statements that imply its proof. In most real-world cases in which data is limited it is desirable or even necessary to use ridge or lasso regression; consistency results could be formulated for these too, by letting the regularisation parameter vanish in the large data limit. Consistency of the OLS
estimator and the statement about universality of OLS estimation of course holds true without any orthogonalisation, since the projection $\Pi_N$ does not depend on it. However, as explained in \Cref{sec:intro}, orthogonalisation is necessary to obtain a series representation. The following is classical.

\begin{corollary}[Series expansion of an $L^2$ function on paths]\label{cor:series}
Under the hypotheses of \Cref{thm:density} below, for $Y \in L^2(\mathcal G)$
\[
Y = \sum_{n = 0}^\infty \Pi^n Y = \sum_{v: (\widetilde p_v, \widetilde p_v) > 0}\langle \ell^Y_v, S(X)_{0,T}\rangle, \qquad \ell^Y_v \coloneqq \frac{\mathbb E[Y \langle \widetilde p_v, S(X)_{0,T}\rangle]}{(\widetilde p_v, \widetilde p_v)} \widetilde p_v
\]
with $\Pi^n$ defined in \eqref{eq:PiN}, $\widetilde p_v$ in \eqref{eq:ptilde}, and convergence of the series in $L^2$.
\end{corollary}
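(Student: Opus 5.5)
The plan is to reduce the statement to the abstract Hilbert space fact that an increasing sequence of closed subspaces with dense union yields a convergent orthogonal decomposition. Set $H_N \coloneqq \mathrm{span}\{\langle \ell, S(X)_{0,T}\rangle \mid \ell \in \Sh_N(V^*)\} \subset L^2(\mathcal G)$. Each $H_N$ is finite-dimensional, hence closed, and $H_0 \subset H_1 \subset \cdots$. Under the hypotheses of \Cref{thm:density} (taken with $p=2$), $\bigcup_N H_N$ is dense in $L^2(\mathcal G)$. For $Y \in L^2(\mathcal G)$ and $\varepsilon>0$, pick $Z \in H_{N_0}$ with $\|Y - Z\|_{L^2} < \varepsilon$. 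Since $\Pi_N$ is the orthogonal projection onto $H_N \ni Z$ for $N \ge N_0$, we get $\|Y - \Pi_N Y\| \le \|Y - Z\| < \varepsilon$. Hence $\Pi_N Y \to Y$ in $L^2$.

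Next I would identify the increments $\Pi_N - \Pi_{N-1}$ with the block projections. Here I read $\Pi^n$ as the orthogonal projection onto $H_n \ominus H_{n-1}$. The literal definition in \eqref{eq:PiN}, projection onto the span of degree-$n$ coordinates, must be interpreted this way for the first equality to make sense.

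\begin{itemize}
\item By construction \eqref{eq:p}, $\{\langle p_w, S(X)_{0,T}\rangle\}_{w\in\mathcal W^n}$ spans $H_n \ominus H_{n-1}$. Indeed, $p_w = \Pi_{n-1}^\bot w$, and $W_n$ maps onto $H_n$ because $N$ is the nullspace of $\phi$.
\item So $\Pi_N = \sum_{n\le N}\Pi^n$ by orthogonality of the blocks.
\item Letting $N\to\infty$ gives the first equality $Y=\sum_n \Pi^n Y$.
\end{itemize}

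For the second equality, I would use that Gram–Schmidt \eqref{eq:ptilde} along an order respecting the grading produces an orthogonal family $\{\langle\widetilde p_v,S(X)_{0,T}\rangle\}_{v\in\mathcal W^n}$. After discarding the $v$ with $(\widetilde p_v,\widetilde p_v)=0$, which vanish in $L^2$, this family is an orthogonal basis of $H_n\ominus H_{n-1}$. Then $\Pi^n Y=\sum_{v\in\mathcal W^n}\langle \ell^Y_v,S(X)_{0,T}\rangle$ is the standard finite orthogonal-basis expansion. The coefficient is $\mathbb E[Y\langle\widetilde p_v,S\rangle]/(\widetilde p_v,\widetilde p_v)$, using that the inner product \eqref{eq:innerProd} coincides with the $L^2$ inner product.

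Summing over $n$ gives the series. Its partial sums over complete blocks are exactly $\Pi_N Y$, so it converges in $L^2$. Convergence in any order consistent with the total order on $\mathcal W$ follows from Bessel/Parseval: the terms are pairwise orthogonal with $\sum\|\langle\ell^Y_v,S\rangle\|^2=\|Y\|^2<\infty$, so the series converges unconditionally.

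The only nontrivial input is the density statement, which is \Cref{thm:density} and is assumed. The remaining point needing care is the bookkeeping with the nullspace $N$ and the degenerate vectors. One must check that the images of $\widetilde p_v$ with positive norm still span each block, which holds because Gram–Schmidt preserves spans modulo null vectors.
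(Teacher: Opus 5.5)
Your proposal is correct and follows essentially the same route as the paper. Density (\Cref{thm:density} with $p=2$) gives $\Pi_N Y\to Y$, one writes $\Pi_N=\sum_{n\le N}\Pi^n$, and each block is expanded along the orthogonal Gram--Schmidt vectors $\widetilde p_v$; your reading of $\Pi^n$ as the projection onto your $H_n\ominus H_{n-1}$ is exactly what the paper's identity $\Pi_N=\sum_{n\le N}\Pi^n$ tacitly requires, since the literal span of degree-$n$ coordinates in \eqref{eq:PiN} is generally not orthogonal to lower degrees.
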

\begin{proof}
$\Pi_N = \sum_{n = 0}^N \Pi^n$ and by density $\lVert Y - \Pi_NY \rVert_{L^2} \to 0$. The second equality follows by further decomposing $\Pi^n$ into the projections onto the orthogonal directions spanned by the vectors $\widetilde p_I$.
\end{proof}

\subsection{Density of linear functions on the signature in \texorpdfstring{$L^p$}{Lᵖ}}

The goal of this section is to prove the following result.
We equip $\R^d$ with the $\ell^1$ norm $\|(x_1,\ldots,x_d)\|=\sum_{i=1}^d|x_i|$
and equip all tensor products, including $(\R^d)^{\otimes k}$, with the projective tensor norm.

\begin{theorem}\label{thm:density}
Consider a probability measure $\mu$ on $\tG(\R^d)$
with infinite radius of convergence, i.e.
\begin{equation}\label{eq:inf_radius}
\sum_{k=0}^\infty \lambda^k \| \E_{S\sim \mu} S^k \|_{(\R^d)^{\otimes k}} < \infty \quad \text{for all } \lambda>0\;.
\end{equation}
Then the space of linear coordinate functions on $\tG(\R^d)$ are dense in $L^p(\mu)$ for all $p\in[1,\infty)$.
\end{theorem}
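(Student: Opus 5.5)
By duality, it suffices to show the following. If $g\in L^q(\mu)$ with $q=p/(p-1)\in(1,\infty]$ satisfies $\E_\mu[g(S)\langle w,S\rangle]=0$ for every word $w$, then $g=0$ $\mu$-a.s. First note that linear coordinate functions lie in every $L^r(\mu)$. Indeed, $\langle w,S\rangle^{2m}=\langle w^{\shuffle 2m},S\rangle$, and $w^{\shuffle 2m}$ is a sum of $\binom{2mk}{k,\dots,k}$ words of length $2mk$. Hence $\E|\langle w,S\rangle|^{2m}\le \binom{2mk}{k,\ldots,k}\|\E S^{2mk}\|$, which is finite because of \eqref{eq:inf_radius}. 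So the pairing is well defined.

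The plan is to transfer the problem to a moment problem for a signed measure, in the style of \cite{characteristic}. Split $g=g^+-g^-$ and define finite positive measures $\nu_\pm(\dif S)=g^\pm(S)\mu(\dif S)$, normalised to probability measures. This uses the constant word $w=\emptyset$, which gives $\int g^+\,\dif\mu=\int g^-\,\dif\mu$. The orthogonality hypothesis says exactly that $\nu_+$ and $\nu_-$ have the same expected signature. If both have infinite radius of convergence, the moment-determinacy theorem of \cite{characteristic} gives $\nu_+=\nu_-$. Since the two are mutually singular, they must both vanish, so $g=0$.

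The key step, and the main obstacle, is showing that $\E_{\nu_\pm}S$ has infinite radius of convergence. The difficulty is that we cannot simply bound $\|\E_{\nu}S^k\|$ by $\E_\mu\|S^k\|$, since the condition is on the norm of the expectation, not on the expectation of the norm. Hölder alone does not bound $\E_\mu[|g|\,\|S^k\|]$ by $\|\E_\mu S^k\|$.

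I would handle this with the same trick used in \cite{characteristic}, using symmetrisation via the shuffle product. Work coordinatewise: with the $\ell^1$ and projective norms, $\|A^k\|$ is comparable to $\sum_{|w|=k}|\langle w,A\rangle|$. For each word, apply Hölder with an even exponent $2m\ge p$:
\[
\E_\mu|g\langle w,S\rangle|\le \|g\|_{L^q}\big(\langle w^{\shuffle 2m},\E_\mu S\rangle\big)^{1/2m}.
\]
If $q<\infty$ this needs $g\in L^{q}$ paired with $\langle w,S\rangle\in L^p$. I would pick $m$ with $2m\geq p$ and use monotonicity of $L^r$ norms.

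Next, bound $\langle w^{\shuffle 2m},\E S\rangle\le \binom{2mk}{k,\ldots,k}\|\E S^{2mk}\|\le (2m)^{2mk}\|\E S^{2mk}\|$. Summing over the $d^k$ words of length $k$ gives
\[
\lambda^k\|\E_\nu S^k\|\lesssim \|g\|_{L^q}\, d^k\lambda^k (2m)^k\|\E_\mu S^{2mk}\|^{1/2m}.
\]
Set $c_n=\|\E_\mu S^n\|$. The assumption \eqref{eq:inf_radius} gives $c_n\Lambda^n\to0$ for every $\Lambda>0$, that is, $c_n^{1/n}\to0$. Then $\big(c_{2mk}^{1/2m}\big)^{1/k}=c_{2mk}^{1/(2mk)}\to0$. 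Therefore the series $\sum_k \lambda^k\|\E_\nu S^k\|$ converges for every $\lambda$ by the root test. I need to check that the $\ell^1$/projective norm makes the coordinate comparison exact, or at worst introduces a factor $C^k$, which the root test absorbs.

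The remaining technical points are minor. One is that $\nu_\pm$ are Borel probability measures on the Polish space $\tG(\R^d)$, which is required by the uniqueness theorem. The other is the case $q=\infty$, which is covered by the same estimate with $m$ arbitrary.
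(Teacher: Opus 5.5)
Your proof is correct, and it takes a genuinely different route from the paper's.

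\textbf{The paper's route.} The paper never dualises. It starts from the algebra of unitary matrix coefficients $S\mapsto\langle M(S)u,v\rangle$, which separates points of $\tG(\R^d)$ by \cite[Theorem 4.8]{characteristic}. It then proves directly that a unital point-separating algebra of bounded continuous functions is dense in $L^p$ of any Radon measure. This uses Stone--Weierstrass on a compact set of large mass, together with the damping $e^{-\delta g^2}g$ expanded as a power series. Finally it approximates each matrix coefficient by its truncations $\sum_{k\le n}M^{\otimes k}S^k$, controlling the tail via $\E\|S^k\|^{2p}\le d^{k(2p-1)}(2p)^{2pk}\|\E S^{2pk}\|$.

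\textbf{Your route.} You use Hahn--Banach instead. Your H\"older step rests on the same shuffle-power identity $\langle w,S\rangle^{2m}=\langle w^{\shuffle 2m},S\rangle$ as the paper's tail bound. It shows that infinite radius of convergence passes from $\mu$ to the normalised measures $g^{\pm}\mu$ for any $g\in L^q(\mu)$. The uniqueness theorem \cite[Theorem 6.1]{characteristic} then forces the annihilator to be trivial.

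\textbf{Checks and one fix.} The details hold up. With the $\ell^1$ norm on $\R^d$, the projective norm on $(\R^d)^{\otimes k}$ is exactly $\sum_{|w|=k}|\langle w,x\rangle|$, so no extra $C^k$ factor appears. The root test then goes through because $c_n^{1/n}\to0$. Only the last step needs rephrasing, since normalised probability measures cannot vanish. Argue instead that if $g\neq0$ then $Z=\int g^+\,\dif\mu=\int g^-\,\dif\mu>0$. Then $\nu_+=\nu_-$ contradicts $\nu_+(\{g>0\})=1$ and $\nu_-(\{g>0\})=0$.

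\textbf{What each route buys.} Yours is shorter. It shows that the uniqueness theorem of \cite{characteristic} yields the result quickly once you notice that the infinite-radius class is stable under $L^q(\mu)$ densities. This fits the paper's caveat via \cite{BT91}, which concerns determinacy of $\mu$ alone, where no such stability is available. The cost is that you use the full moment-determinacy theorem as a black box, and the conclusion is non-constructive. The paper needs only point separation and produces explicit approximants: truncated unitary representations of Stone--Weierstrass approximants. Its intermediate density lemma also holds on any complete metric space carrying a Radon measure.
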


The exact choice of norms on $(\R^d)^{\otimes k}$ for $k\geq 1$ is not important in \Cref{thm:density} because if $\mu$ verifies \eqref{eq:inf_radius} for one choice of norms, then it verifies \eqref{eq:inf_radius} for all choices of cross norms.

As we discuss in \Cref{sec:intro}, a related $L^p$-density result, which is more restrictive as it requires time to be included as a coordinate and a weight function to be integrable, has very recently been obtained in \cite{lpdensity}. See also \Cref{thm:density_brownian} below for a short Wiener chaos-based proof that applies in the case $p = 2$ and $X$ is a time-augmented Brownian motion. \Cref{thm:density} applies to the stochastic processes to which the moment-determinacy results of \cite{characteristic} applies to, such as Gaussian and Markovian rough paths. Note that, while the hypothesis of infinite radius of convergence is the same as in \cite{characteristic}, $L^p$ density does not follow from moment determinacy alone: this is even true in finite dimension \cite{BT91} (only in the scalar case and $p \leq 2$ does this implication hold \cite[p.69]{dunkl_orthogonal_2014}). We also remark that a similar approximation result for the modified robust signatures \cite{sigMoments} was obtained in \cite{primalDual}.

For the proof, we require a few lemmas.
In the following, let $(Y,\rho)$ be a complete (not necessarily separable) metric space with a Borel probability measure $\mu$.

\begin{lemma}\label{lem:bc_dense}
The space of continuous bounded functions is dense in $L^p(\mu)$ for any $p\in [1,\infty)$.
\end{lemma}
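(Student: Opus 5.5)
The plan is to reduce to indicator functions of Borel sets and to approximate each such indicator by a Urysohn-type continuous function. The only measure-theoretic input needed is closed/open regularity of finite Borel measures on metric spaces. This regularity holds without separability or tightness, so neither completeness nor separability of $(Y,\rho)$ will actually be used.

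\emph{Step 1 (regularity).} Let $\mathcal R$ be the class of Borel sets $B\subset Y$ such that for every $\varepsilon>0$ there exist a closed $F$ and an open $U$ with $F\subset B\subset U$ and $\mu(U\setminus F)<\varepsilon$. I will show that $\mathcal R$ is a $\sigma$-algebra containing the closed sets, so that it equals the Borel $\sigma$-algebra.
\begin{itemize}
\item \emph{Closed sets lie in $\mathcal R$.} If $F$ is closed, the open sets $U_n=\{y:\rho(y,F)<1/n\}$ decrease to $F$. By continuity from above of the finite measure $\mu$, we get $\mu(U_n\setminus F)\to 0$.
\item \emph{Complements.} $\mathcal R$ is closed under complements, since $F\subset B\subset U$ implies $U^c\subset B^c\subset F^c$, and $F^c\setminus U^c=U\setminus F$.
\item \emph{Countable unions.} Given $B_n\in\mathcal R$, choose $F_n\subset B_n\subset U_n$ with $\mu(U_n\setminus F_n)<\varepsilon 2^{-n-1}$, and set $U=\bigcup_n U_n$. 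The set $\bigcup_n F_n$ need not be closed. To fix this, use continuity from below to pick $N$ with $\mu\bigl(\bigcup_n F_n\setminus\bigcup_{n\le N}F_n\bigr)<\varepsilon/2$, and take the closed set $F=\bigcup_{n\le N}F_n$. Then $\mu(U\setminus F)<\varepsilon$.
\end{itemize}
I expect this step to be the only point requiring care, mainly the truncation in the countable-union case.

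\emph{Step 2 (indicators).} Fix a Borel set $B$ and take $F\subset B\subset U$ as in Step 1. If $F=\emptyset$, take $f\equiv 0$; if $U=Y$, take $f\equiv 1$. Otherwise define
\[
f(y)=\frac{\rho(y,U^c)}{\rho(y,U^c)+\rho(y,F)}.
\]
The denominator never vanishes, because $F$ and $U^c$ are disjoint closed sets. Hence $f$ is continuous with values in $[0,1]$, equals $1$ on $F$, and equals $0$ off $U$. Consequently $|\one_B-f|\le \one_{U\setminus F}$, and so $\|\one_B-f\|_{L^p(\mu)}^p\le\mu(U\setminus F)<\varepsilon$.

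\emph{Step 3 (conclusion).} Simple functions $\sum_{i=1}^m c_i\one_{B_i}$ are dense in $L^p(\mu)$ for $p\in[1,\infty)$. This is standard: truncate $g$ to $g\one_{\{|g|\le M\}}$, which converges to $g$ in $L^p$ by dominated convergence, and then approximate the bounded function uniformly by simple functions. Approximating each $\one_{B_i}$ as in Step 2 and applying Minkowski's inequality yields a bounded continuous function within any prescribed $L^p$ distance of $g$.
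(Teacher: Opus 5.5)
Your proof is correct and follows essentially the same route as the paper: reduce to indicator functions and approximate them by continuous functions built from the distance $\rho$. The paper takes for granted that indicators of open sets span a dense subspace (i.e.\ outer regularity of $\mu$) and then uses the monotone approximation $f_n=\min\{n\rho(\cdot,Y\setminus A),1\}\uparrow\mathbf{1}_A$ with dominated convergence, whereas you prove the closed/open regularity explicitly and use a two-sided Urysohn function with a direct bound, which makes your argument fully self-contained.
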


\begin{proof}
The span of indicator functions on open sets is dense in $L^p(\mu)$.
But for $A\subset Y$ open, the continuous bounded function $f_n(y) = \{n\inf_{x\in Y\setminus A}\rho(x,y)\}\wedge 1$ converges to $\bone_A$ pointwise, and thus in $L^p$ by dominated convergence.
\end{proof}

We say that a set of functions $\cA \subset \R^{Y}$ \emph{separates points} if for all distinct $x, y \in Y$ there exists $h\in \cA$ such that $h(x)\neq h(y)$.

\begin{lemma}\label{lem:F_dense}
Suppose $\cA$ is an algebra of continuous bounded functions on $Y$ that separates points and contains the constant functions.
Suppose also that $\mu$ is Radon (which is automatic if $Y$ is separable).
Then $\cA$ is dense in $L^p(\mu)$ for any $p\in [1,\infty)$.
\end{lemma}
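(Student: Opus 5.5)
By \Cref{lem:bc_dense} it suffices to approximate a fixed continuous bounded $f\colon Y\to\R$ in $L^p(\mu)$ by elements of $\cA$. The plan is to combine Radon inner regularity with the Stone--Weierstrass theorem on a large compact set, and then control the error off that compact set using a uniform bound on the approximants.

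Fix $\eps>0$ and let $M=\sup|f|$ (assume $M>0$). Since $\mu$ is Radon, there is a compact $K\subset Y$ with $\mu(Y\setminus K)<\eps$. The restrictions $\cA|_K$ form a subalgebra of $C(K)$ that contains the constants and separates points. Hence, by Stone--Weierstrass, there exists $h\in\cA$ with $\sup_K|f-h|<\eps$.

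The difficulty is that $h$ is only bounded on $Y$, with no control on the size of $\sup_Y|h|$, so the contribution from $Y\setminus K$ cannot be bounded directly. To fix this, compose $h$ with a polynomial that is uniformly bounded. Concretely, let $C=\sup_Y|h|<\infty$. By Weierstrass, choose a real polynomial $q$ with
\[
\sup_{|t|\le C}\bigl|q(t)-\phi(t)\bigr|<\eps,
\]
where $\phi(t)=(t\wedge (M+1))\vee(-M-1)$ is the clipping map. Then $g=q\circ h\in\cA$, since $\cA$ is an algebra containing the constants. Moreover, $|g|\le M+1+\eps$ on all of $Y$, and $|g-f|\le 2\eps$ on $K$, because $|h|\le M+\eps$ there and so $\phi(h)=h$ on $K$ once $\eps<1$. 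Therefore
\[
\|f-g\|_{L^p}^p\le (2\eps)^p+(2M+1+\eps)^p\,\eps,
\]
which can be made arbitrarily small.

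This truncation step is the only nonroutine point in the argument. The separable case follows because every Borel probability measure on a Polish space is Radon (Ulam's theorem). If $\cA$ is not closed under uniform limits, nothing changes: $q\circ h$ lies in $\cA$ exactly, because $q$ is a polynomial.
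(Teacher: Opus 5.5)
Your proof is correct, and its skeleton matches the paper's: reduce via \Cref{lem:bc_dense}, take a compact $K$ from the Radon property, then apply Stone--Weierstrass on $K$. The difference is in how you control the approximant off $K$. The paper replaces the Stone--Weierstrass approximant $g$ by the damped function $h_\delta = e^{-\delta g^2} g$. This function has $\sup_Y|h_\delta| = O(\delta^{-1/2})$ and differs from $g$ by $O(\delta)$ on $K$. Since $h_\delta$ is not in $\cA$ in general, the paper must show it lies in the $L^p$-closure of $\cA$, via the series $\sum_n (-\delta)^n g^{2n+1}/n!$, which converges because $g$ is bounded. It then balances the growing bound against $\mu(Y\setminus K)$ by choosing $\delta=\eps^{1/p}$. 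You instead apply Weierstrass a second time, approximating the clipping map on the bounded range $[-C,C]$ of $h$. As a result, $q\circ h$ lies in $\cA$ itself, is bounded by $M+1+\eps$ independently of $h$, and needs no parameter balancing, which gives the cleaner estimate $(2\eps)^p+(2M+1+\eps)^p\eps$. Both routes hinge on the same fact, that the approximant is bounded on all of $Y$: the paper needs it for the series to converge, and you need it so that $q$ only has to approximate the clip on a compact interval. The paper's damping function is universal, whereas your $q$ depends on $C=\sup_Y|h|$; this is harmless since $h$ is fixed before $q$ is chosen.
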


\begin{proof}
There are several proofs, see e.g.\ \cite{96025} for an even stronger statement that drops the continuity assumption.
We give here an elementary proof that uses only the classical Stone--Weierstrass theorem.
We let $C>0$ denote a sufficiently large universal constant.

Let $\overline\cA$ be the closure of $\cA$ in $L^p(\mu)$.
By \Cref{lem:bc_dense}, it suffices to show that if $f\colon Y\to \R$ is continuous with $\sup_{y\in Y}|f(y)| \leq 1$, then for every $\eps>0$, there exists $h \in \overline\cA$ such that $|f-h|_{L^p(\mu)} < \eps$.
To this end, consider $\eps\in(0,1)$.
Let $K\subset Y$ be a compact set such that $\mu(Y\setminus K) < \eps$;
such a set $K$ exists because $\mu$ is Radon.
By Stone--Weierstrass, there exists $g\in\cA$ such that $\sup_{y\in K} |g(y) - f(y)| < \eps$ (of course $g$ depends on $K$ and $f$).
At this stage, we \emph{don't} have $|g-f|_{L^p(\mu)}$ small because $g$ might be large outside $K$.

Now consider the functions $h_\delta(y) = e^{-\delta g(y)^2} g(y)$ for small $\delta>0$.
Define
\begin{equation*}
H(\delta) = \sup_{x\in \R}|e^{-\delta x^2}x|
\;.
\end{equation*}
Then $H(\delta)\leq C \delta^{-1/2}$
and thus
\[
\sup_{y\in Y}|h_\delta(y)| \leq H(\delta) \leq C\delta^{-1/2}\;.
\]
Moreover $\sup_{y\in K} |g(y)-h_\delta(y)| \leq C \delta$ since $\sup_{y\in K}|g(y)|\leq 2$ and $e^{-\delta z^2} = 1 + O(\delta z^2)$ for $|z|\leq 2$.
In particular, $\sup_{y\in K} |f(y)-h_\delta(y)| \leq \eps + C \delta$
and thus
\begin{equation*}
\int_{K} |f-h_\delta|^p \mrd \mu \leq (\eps +C\delta)^p\;.
\end{equation*}
On the other hand,
\begin{equation*}
\int_{Y\setminus K} \{f^p + h_\delta^p\}\mrd \mu \leq \eps + \eps H(\delta)^p \leq \eps + C^p \varepsilon \delta^{-p/2}
\end{equation*}
Taking $\delta = \eps^{1/p}$,
we obtain
\[
\|f-h_\delta\|_{L^p(\mu)}^p \leq (\eps+C\eps^{1/p})^p + \eps + C^p
\eps^{1/2}
\;.
\]
It remains to show that $h_\delta \in \overline \cA$.
We write as a power series $h_\delta = \sum_{n=0}^\infty \frac{(-\delta)^n g^{2n+1}}{n!}$ and remark that, since $g$ is bounded on $Y$, the series converges absolutely in $L^p$. Since $g^n\in\cA$ for all $n\geq 1$, we obtain $h_\delta \in \overline \cA$.
\end{proof}

\begin{proof}[Proof of \Cref{thm:density}]
Let $A$
denote the space of matrix coefficients of unitary representations of $\tG(\R^d)$ as in \cite{characteristic},
i.e. $A$ contains those functions $f\colon\tG(\R^d)\to\C$ such that $f(S) = \langle M(S) u , v\rangle_{\C^N}$ for some $u,v \in\C^N$ and a linear map $M\colon \R^d\to \mfu(N)$ for some $N\geq 1$, where $\mfu(N)$ is the space of $N\times N$ skew-Hermitian matrices and where $M(S) = \sum_{k=0}^\infty M^{\otimes k}S^k$, which is a unitary $N\times N$ matrix.
Then $A$ is a $\C$-algebra of bounded continuous functions on $\widetilde G(\R^d)$ which, by \cite[Theorem 4.8]{characteristic} (see also \cite{Hao_24} for a different proof), separates points.
Moreover $A$ contains the constant functions and is closed under complex conjugation, so by \Cref{lem:F_dense}, functions of the form $\Re f$ for $f\in A$ are dense in $L^p(\mu)$.
It remains to show that every $f\in A$ can be approximated in $L^p(\mu)$ by ($\C$-valued) linear functions on $\tG(\R^d)$.

Consider a map $M\colon \R^d \to \mfu(N)$
and let $S = (1,S^1,S^2,\ldots)$ be a $\tG(\R^d)$-valued random variable with law $\mu$.
Since $\sum_{k=0}^n M^{\otimes k}S^k$ is a linear function on $\tG(\R^d)$
and $M(S) = \sum_{k=0}^\infty M^{\otimes k}S^k$, it suffices to show that
\begin{equation}\label{eq:tail_vanish}
\lim_{n\to\infty} \sum_{k=n}^\infty \|M\|^k (\E\|S^k\|_{(\R^d)^{\otimes k}}^p)^{1/p}
= 0\;.
\end{equation}
For normed spaces $E,F$, we equip $E\oplus F$ with the $\ell^1$ norm $\|(x,y)\| = \|x\|_E + \|y\|_F$.
Consider now $n\geq 2$ and the $n$-th iterated coproduct $\Delta_n\colon T(\R^d) \to T(\R^d)^{\otimes n}$,
which is the unique algebra morphism given by $\Delta_n (v) = v\otimes 1\otimes\cdots\otimes 1 + \cdots + 1\otimes \cdots 1\otimes v$ for $v\in \R^d$, where the number of terms is $n$
(see \cite[Sec.~1.4]{reutenauer_free_1993}).
Note that $\Delta_2=\Delta$ is the usual coproduct on $T(\R^d)$ which is dual to the shuffle product.
In particular, for $v\in\R^d$,
\begin{equation}\label{eq:Delta_n_vectors}
\|\Delta_n v\|_{(\R^{d})^n} = n\|v\|_{\R^d}
\end{equation}
and thus, for $x \in (\R^d)^{\otimes k}$,
\begin{equation}\label{eq:Delta_n_norm}
\|\Delta_n x\| \leq n^k \|x\|_{(\R^d)^{\otimes k}}\;.
\end{equation}
Here, $\Delta_n x$ is an element in $\bigoplus_{k_1+\ldots+k_n=k} \bigotimes_{i=1}^n(\R^d)^{\otimes k_i}$,
which we recall is equipped with the $\ell^1$ norm of the corresponding projective tensor norms.
The bound \eqref{eq:Delta_n_norm} 
follows from \eqref{eq:Delta_n_vectors}
and the fact that, for any normed algebra $B$ and linear map $Q\colon\R^d\to B$ with operator norm $\|Q\|$, $Q^{\otimes k}\colon (\R^d)^{\otimes k}\to B$ has operator norm bounded above by $\|Q\|^k$ due to the choice of projective norms on $(\R^d)^{\otimes k}$.
More precisely, we apply this with $Q=\Delta_n$, for which $\|Q\|=n$ by \eqref{eq:Delta_n_vectors}, and $B= T(\R^d)^{\otimes n}$ equipped with the projective norm, where we equip $T(\R^d)$ with the $\ell^1$ norm $\|(x^0,x^1,\ldots)\| = \sum_{k\geq0}\|x^k\|_{(\R^d)^{\otimes k}}$.

Therefore, for $p\geq 1$ an integer,
using the duality between shuffle and $\Delta$ as in \cite[Sec.~1.5]{reutenauer_free_1993} and the fact that $S$ takes values in $G(\R^d)$,
\begin{align*}
\E \|S^k\|_{(\R^d)^{\otimes k}}^{2p}
&=
\E \Big(\sum_{|w|=k}\lvert\scal{w,S^k}\rvert\Big)^{2p}
\leq
d^{k(2p-1)}  \sum_{|w|=k} \E \scal{w,S^k}^{2p}
=
d^{k(2p-1)}  \sum_{|w|=k} \E \scal{w^{\shuffle 2p},S^{2pk}}
\\
&=
d^{k(2p-1)} \sum_{|w|=k} \E \scal{w^{\otimes 2p}, \Delta_{2p} S^{2pk}}
\leq
d^{k(2p-1)} \|\Delta_{2p} \E S^{2pk}\|
\\
&\leq
d^{k(2p-1)} (2p)^{2pk} \|\E S^{2pk}\|_{(\R^d)^{\otimes 2pk}}\;,
\end{align*}
where $w$ in the sums ranges over all words of length $k$ in the canonical basis of $\R^d$
(this generalises the calculation above \cite[Proposition 3.4]{characteristic} which was for $p=1$).
Since $\E S$ has an infinite radius of convergence, for any $\eps>0$, one has
$\|\E S^{k}\| \leq \eps^k$ for all $k>k(\eps)\geq 1$ sufficiently large.
Therefore, for all $k>k(\eps)$,
\begin{equation*}
(\E \|S^k\|^{2p})^{1/(2p)} \leq d^\frac{k(2p-1)}{2p}(2p)^k \eps^{k}\;.
\end{equation*}
Taking $\eps$ sufficiently small (depending on $\|M\|$, $p$, and $d$),
we obtain \eqref{eq:tail_vanish} as required.
\end{proof}

\section{General properties of orthogonal shuffle polynomials}\label{sec:orthPol}

In this section, we discuss structural properties of orthogonal shuffle polynomials in $\Sh(V^*)$, generalizing several classical results in the classical commutative setting. We begin by reformulating shuffle polynomials as ordinary commutative polynomials with graded generators. 

\subsection{Shuffle polynomials as graded commutative polynomials}

For a vector space $V$, let $\Sym(V)$ denote the symmetric algebra of $V$.
Recall that $L(V)$ denotes the free Lie algebra of $V$ and, from \cite{reutenauer_free_1993},
\begin{equation}
\mathrm{Sh}(V^*) = \Sym(L(V)^\gr)
\end{equation}
as commutative algebras, where $(\cdot)^\gr$ denotes the graded dual and identity is intended as being between functors. Fixing a Hall basis of $L(V)$ allows us to express elements of $\Sh(V^*)$ as polynomials over an infinite set of indeterminates, specified by the dual Hall basis \cite[\S 5.2]{reutenauer_free_1993}. As many of the results in this section do not rely on the Lie algebra structure, we will mainly work with a general graded vector space $W = \bigoplus_{m=0}^\infty W^m$ of finite type, where the degree $m$ subspace $W^m$ is finite dimensional. Our general results for $\Sym(W^\gr)$ apply to the signature setting by taking $W = L(V)$.\medskip



We fix a basis of $W^m$, denoted $\{w_{m,i}\}_{i=1}^{N_m}$, where $N_m$ is the dimension of $W^m$. We abuse notation, and also denote the dual basis in $(W^m)^*$ by $w_{m,i}$. In general, we will denote a monomial in $\Sym(\gvec^\gr)$
by $\sbasvec^\balpha$, where $\balpha = (\alpha_{m, i})_{m \in \N, \, i \in [N_m]}$, where each $\alpha_{m,i} \in \N$, and only finitely many are nonzero, and set 
\begin{align} \label{eq:l_monomials}
    \sbasvec^\balpha = \prod_{m =1}^\infty \,\prod_{i =1}^{N_m} w^{\alpha_{m,i}}_{m,i}. 
\end{align}
Let $Q \in \Sym(\gvec^\gr)$
be a polynomial $Q = \sum_{\balpha} q_\balpha \sbas^\balpha$, where finitely many of the $b_\alpha$ are nonzero. 
We have two notions of degree called the \emph{tensor (total) degree} and the \emph{shuffle degree}, respectively defined on a shuffle monomial $\sbasvec^\balpha$ by
\[
    \tdeg(\sbas^\balpha) = |\balpha| = \sum_{m=1}^\infty \, \sum_{i=1}^{N_m} m \alpha_{m,i} \andd \sdeg(\sbas^\balpha) =  \sum_{m=1}^\infty \, \sum_{i=1}^{N_m} \alpha_{m,i}.
\]
We denote the column vector of shuffle monomials of total degree $n$ by $\sbasvec_n$ and denote its dimension by $r_n$.


\begin{definition}
    A linear functional $\cL : \Sym(W^\gr) \to \R$ is \emph{quasi-definite} if there exists a basis of $\Sym(W^\gr) $ such that for any two basis elements $P, Q$, we have
    \[
        \cL(PQ) = 0 \text{ if } P \neq Q \quad \andd \quad \cL(P^2) \neq 0.
    \]
    If in addition, $\cL$ satisfies $\cL(P^2) > 0$, then we say that $\cL$ is positive-definite.
\end{definition}

Given a quasi-definite functional $\cL$, define the symmetric bilinear form (or inner product, if $\cL$ is positive-definite)
\[
    (P,Q) \coloneqq \cL(PQ),
\]
and say that $\{\bp_n\}_{n\geq 0}$, where $\bp_n \in \Sym(W^{\gr})^{r_n}$ is \emph{block orthogonal} if $(\bw_m, \bp_n^\tr) = 0$ for all $m < n$ and $(\bw_n, \bp_n^\tr)$ is invertible. In this section, orthogonal polynomials refer to block orthogonal polynomials. 
We say that $\{\tbp_n\}_{n\geq 0}$ is \emph{orthonormal} if in addition, $(\tbp_n, \tbp_n^\tr)$ is the identity. 
We note that there are two main differences from the classical orthogonal polynomial setting:
\begin{enumerate}
    \item there are an infinite number of generators, ie.~$W$ is infinite-dimensional; and
    \item the generators are \emph{graded}, and thus orthogonalize with respect to the \emph{total} degree.
\end{enumerate}

In the following subsections, we generalize classical results about orthogonal polynomials~\cite{dunkl_orthogonal_2014} to our graded, infinite-dimensional setting. We show that orthogonal polynomials on $\Sym(W^\gr)$ satisfy a recurrence relation along with rank conditions on the defining matrices. In~\Cref{ssec:favard}, we prove the converse in a generalization of Favard's theorem, and furthermore discuss when inner products on $\Sym(W^\gr)$ are induced by probability measures in~\Cref{ssec:jacobi}. We return to the specific case of $W = L(V)$
in~\Cref{ssec:measure_GV} and discuss measures on $G(V)$.

\subsection{Recurrence relation} \label{ssec:recurrence}
We begin by assuming that $\cL$ is a quasi-definite linear functional 
on $\Sym(W^\gr)$ and that $\{\bp_n\}_{n\geq 0}$ denotes a system of block orthogonal polynomials.
Let
\[
    H_n = (\bp_n, \bp_n^\tr) \in \Mat(r_n, r_n)
\]
which is symmetric and invertible by definition of block orthogonality.
We begin by generalizing the three-term relation of~\cite[Theorem 3.3.1]{dunkl_orthogonal_2014} to the graded setting.


\begin{proposition} \label{prop:relation}
    For $n \in \N_0$, $m \in [n]$, $i \in [N_m]$ and $-m \leq k \leq m$, there exist unique matrices $M^k_{n, m, i} \in \Mat(r_{n-m}, r_{n-m+k})$ such that
    \begin{align} \label{eq:recurrence}
        \sbas_{m,i} \bp_{n-m} = \sum_{k=-m}^{m} M^k_{n,m,i} \bp_{n-m+k}.
    \end{align}
    These matrices satisfy
    \begin{align} \label{eq:M_identity}
        M^k_{n,m,i} H_{n-m+k} = H_{n-m} (M^{-k}_{n+k,m,i})^\tr.
    \end{align}    
\end{proposition}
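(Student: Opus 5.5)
The plan is to mimic the classical proof of the three-term relation (Dunkl--Xu, Thm.~3.3.1), using the fact that multiplication by a generator of tensor degree $m$ raises total degree by exactly $m$.

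\textbf{Step 1: a basis adapted to total degree.} The entries of $\bp_0,\dots,\bp_N$ together form a basis of the space $\Sym_N$ of polynomials of total degree at most $N$. To see this, note first that $(\bw_n,\bp_n^\tr)$ is invertible. Write $\bp_n = G_n\bw_n + (\text{lower total degree})$. Block orthogonality against all $\bw_m$ with $m<n$ gives $(\bw_n,\bp_n^\tr) = (\bw_n,\bw_n^\tr)G_n^\tr$ modulo terms that I will check vanish. More cleanly, I argue by induction on $n$: if $\bp_n^\tr c \in \Sym_{n-1}$ for some vector $c$, then $\bp_n^\tr c$ is orthogonal to everything in $\Sym_{n-1}$ and lies in $\Sym_{n-1}$. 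Hence $(\bw_n,\bp_n^\tr)c = (\bw_n, \bp_n^\tr c)$. Quasi-definiteness should then force $\bp_n^\tr c = 0$, and so $c=0$ by invertibility. It follows that the leading coefficient matrix $G_n$ is invertible. Consequently the entries of $\bp_n$ are linearly independent modulo $\Sym_{n-1}$, and the entries of $\bp_0,\dots,\bp_n$ span $\Sym_n$. In the same step I record that $\bp_n$ is orthogonal to all of $\Sym_{n-1}$, since the entries of $\bw_m$ for $m<n$ span it.

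\textbf{Step 2: existence and uniqueness of the expansion.} Since $\sbas_{m,i}$ has total degree $m$, every entry of $\sbas_{m,i}\bp_{n-m}$ lies in $\Sym_n$. By Step 1 it therefore has a unique expansion $\sum_{j=0}^{n} C_j\bp_j$. Pairing with $\bp_j^\tr$ and using block orthogonality gives
\[
C_j H_j = (\sbas_{m,i}\bp_{n-m},\bp_j^\tr) = (\bp_{n-m}, (\sbas_{m,i}\bp_j)^\tr).
\]
If $j < n-2m$, i.e.\ $j+m < n-m$, then $\sbas_{m,i}\bp_j \in \Sym_{j+m} \subset \Sym_{n-m-1}$. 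The right-hand side then vanishes by orthogonality, and invertibility of $H_j$ gives $C_j = 0$. Setting $j=n-m+k$ turns the surviving range into $-m\le k\le m$, with $M^k_{n,m,i}:=C_{n-m+k}$. Terms with $j<0$ simply do not occur, which is consistent with $m\in[n]$. Uniqueness follows from the linear independence in Step 1.

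\textbf{Step 3: the identity \eqref{eq:M_identity}.} From Step 2, $M^k_{n,m,i}H_{n-m+k} = \cL(\sbas_{m,i}\bp_{n-m}\bp_{n-m+k}^\tr)$. Applying the same formula to $\sbas_{m,i}\bp_{n-m+k}$, which corresponds to index $n' = n+k$ and shift $-k$, gives $M^{-k}_{n+k,m,i}H_{n-m} = \cL(\sbas_{m,i}\bp_{n-m+k}\bp_{n-m}^\tr)$. Transposing and using symmetry of $H_{n-m}$ and commutativity of the product yields the claim. I must check that $n+k\ge m$ so that $M^{-k}_{n+k,m,i}$ is defined; otherwise $\bp_{n-m+k}$ has negative index and both sides are vacuous.

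\textbf{Main obstacle.} The only delicate point is Step 1, namely deducing from quasi-definiteness (defined via the existence of some orthogonal basis) that a nonzero polynomial in $\Sym_{n-1}$ cannot be orthogonal to all of $\Sym_{n-1}$. I would handle this by showing that the Gram matrix of $\cL$ on each $\Sym_N$ is nondegenerate. If the given orthogonal basis can be chosen compatible with the total-degree filtration, this is immediate; if not, I would expect to need to argue that this filtration compatibility holds, or to absorb it into the hypothesis.
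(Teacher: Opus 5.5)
Your Steps 2 and 3 are exactly the paper's proof. You expand $\sbas_{m,i}\bp_{n-m}$ in the $\bp_j$, kill the coefficients with $j<n-2m$ by moving $\sbas_{m,i}$ across the pairing, read off $M^k_{n,m,i}=(\sbas_{m,i}\bp_{n-m},\bp_{n-m+k}^\tr)H_{n-m+k}^{-1}$, and get \eqref{eq:M_identity} by transposing. The paper does not prove your Step 1 at all. It simply takes for granted that the $\bp_j$ span and that $H_n$ is invertible ``by definition of block orthogonality''.

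So the only open point is the one you flag, and there the lever you propose is the wrong one. Quasi-definiteness as defined (existence of \emph{some} orthogonal basis, not necessarily adapted to total degree) only gives nondegeneracy of $(\cdot,\cdot)$ on all of $\Sym(W^\gr)$, not on each $\Sym_N$. It does not even exclude $\cL(1)=0$, so it cannot force $\bp_n^\tr c=0$. Note also that Step 2 divides by $H_j$, whose invertibility your Step 1 never establishes.

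What closes the gap is the invertibility of $(\bw_n,\bp_n^\tr)$ built into block orthogonality, provided you carry more in the induction. Assume the entries of $\bp_0,\dots,\bp_{n-1}$ form a basis of $\Sym_{n-1}$ \emph{and} $H_0,\dots,H_{n-1}$ are invertible.
\begin{itemize}
\item Since $\Sym_j$ is spanned by the entries of $\bw_0,\dots,\bw_j$, the Gram matrix of the form on $\Sym_{n-1}$ in this basis is $\mathrm{diag}(H_0,\dots,H_{n-1})$. So the form is nondegenerate there: the block-orthogonal system is itself the degree-adapted orthogonal basis you were hoping for.
\item Then $q=\bp_n^\tr c\in\Sym_{n-1}\cap\Sym_{n-1}^{\perp}$ forces $q=0$. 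Hence $(\bw_n,\bp_n^\tr)c=0$ and $c=0$, so $G_n$ is invertible and the entries of $\bp_0,\dots,\bp_n$ form a basis of $\Sym_n$.
\item Write $\bp_n=G_n\bw_n+\bq_n$ with $\bq_n$ of total degree $<n$. Then $H_n=G_n(\bw_n,\bp_n^\tr)$, because $(\bq_n,\bp_n^\tr)=0$. This is the identity to use, not $(\bw_n,\bp_n^\tr)=(\bw_n,\bw_n^\tr)G_n^\tr$, whose cross term $(\bw_n,\bq_n^\tr)$ need not vanish. It shows $H_n$ is invertible and closes the induction.
\item The base case is forced as well. $(\bw_0,\bp_0)=\bp_0\,\cL(1)\neq0$ makes $\bp_0$ a nonzero constant and $\cL(1)\neq0$, so $H_0=\bp_0^2\,\cL(1)\neq0$.
\end{itemize}
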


\begin{proof}
    The components of $\sbas_{m,i} \bp_{n-m}$ are polynomials of degree $n$, so they can be written as a linear combination of orthogonal polynomials,
    \[
        \sbas_{m,i} \bp_{n-m} = \sum_{k=0}^{n} M^k_{n,m,i} \bp_k.
    \]
    Next, due to the orthogonality of polynomials, we have
    \begin{align} \label{eq:orthogonality_recurrence}
        (\sbas_{m,i} \bp_{n-m}, \bp_k^\tr) = \begin{cases} M^k_{n,m,i} (\bp_k, \bp_k^\tr) & :k =n-2m, \ldots, n \\
            0 & : k < n - 2m.
        \end{cases}
    \end{align}
    The second case is trivial since $\sbas_{m,i} \bp_k^\tr$ is at most degree $k+m \leq n-m$ and
    \[
        (\sbas_{m,i} \bp_{n-m}, \bp_k^\tr) = (\bp_{n-m}, \sbas_{m,i} \bp_k^\tr) = 0
    \]
    Therefore, we have
    \begin{align} \label{eq:M_def}
        M^k_{n,m,i} = \begin{cases} (\sbas_{m,i} \bp_{n-m}, \bp_k^\tr) H_k^{-1} & : k =n-2m, \ldots, n \\
            0 & : k < n - 2m.
        \end{cases}
    \end{align}
    Furthermore, for $k = n-2m, \ldots, n$, we have
    \[
        M^k_{n,m,i} H_k = (\sbas_{m,i} \bp_{n-m}, \bp_k^\tr) = (\sbas_{m,i} \bp_k, \bp_{n-m}^\tr)^\tr =  H_{n-m} (M^{n-m}_{k+m,m,i})^\tr.
    \]
    Then we reindex by $k \mapsto n-m+k$ to get the result.
\end{proof}

Rather than the classical three term recurrence, we now obtain a recurrence relation with $2m+1$ terms for degree $m$ generators. This is due to the fact that multiplication by $w_{m,i}$ increases the degree by $m$, and thus $(w_{m,i} \bp_{n-m}, 
bp_k^\tr)$ may be non-trivial for a larger range of degrees $k$, see~\eqref{eq:orthogonality_recurrence}.
Here,~\eqref{eq:M_identity} provides relationships between matrices for different orders of $n$.
Two of these will be of particular importance, and we will rename them as
\begin{align}
    A_{n,m,i} \coloneqq M^{m}_{n,m,i} \in \Mat(r_{n-m}, r_{n}) \quad \andd \quad C_{n,m,i} \coloneqq M^{-m}_{n+m,m,i} \in \Mat(r_{n}, r_{n-m}),
\end{align}
which are related through~\eqref{eq:M_identity} by
\begin{align} \label{eq:A_C_relation}
    A_{n,m,i} H_{n} = H_{n-m} C_{n,m,i}^\tr.
\end{align}
In particular, $A_{n,m,i}$ is the leading matrix in the recurrence relation for $\sbas_{m,i} \bp_{n-m}$, while $C_{n,m,i}$ is the non-leading matrix in the recurrence for $\sbas_{m,i} \bp_{n}$ which relates to $A_{n,m,i}$ via~\eqref{eq:A_C_relation}.
So far, we have been working with a block orthogonal system of polynomials $\bp_n$. However, if the system of polynomials is \emph{orthonormal}, which we denote by $\tbp_n$, then $H_n=I$ is the identity matrix, and we obtain the following.


\begin{corollary} \label{cor:relation_on}
    Let $\tbp_n$ be an orthonormal system of polynomials. For $n \in \N_0$, $m \in [n]$, $i \in [N_m]$ and $0 \leq k \leq m$, there exist unique matrices $M^k_{n, m, i} \in \Mat(r_{n-m}, r_{n-m+k})$ for such that
    \begin{align} \label{eq:recurrence_on}
        \sbas_{m,i} \bp_{n-m} = \sum_{k=0}^{m} M^k_{n,m,i} \tbp_{n-m+k} + \sum_{k=1}^{m} (M^{   k}_{n-k,m,i})^\tr \tbp_{n-m-k}.
    \end{align}
    Furthermore, $M^{n-m}_{n,m,i}$ is symmetric. 
\end{corollary}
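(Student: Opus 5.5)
The plan is to specialise \Cref{prop:relation} to an orthonormal system $\tbp_n$, for which $H_n = I$ for every $n$, and then use the symmetry identity \eqref{eq:M_identity} to rewrite the negative-index coefficients in terms of positive-index ones.

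First, apply \Cref{prop:relation} to $\tbp_n$. This gives unique matrices $M^k_{n,m,i}$, $-m\le k\le m$, with
\[
\sbas_{m,i}\tbp_{n-m}=\sum_{k=-m}^{m}M^k_{n,m,i}\tbp_{n-m+k}.
\]
Uniqueness holds because the components of the $\tbp_j$ form a basis (they are linearly independent by orthonormality). By \eqref{eq:M_def}, the coefficients are given explicitly by
\[
M^k_{n,m,i}=(\sbas_{m,i}\tbp_{n-m},\tbp_{n-m+k}^\tr),
\]
with the convention that terms of negative degree vanish.

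Next, split the sum into $k\ge 0$ and $k<0$. For $k=-j$ with $1\le j\le m$, identity \eqref{eq:M_identity} with $H=I$ reads $M^{-j}_{n,m,i}=(M^{j}_{n-j,m,i})^\tr$. This can also be checked directly from the inner-product formula: since multiplication by $\sbas_{m,i}$ is self-adjoint for $(P,Q)=\cL(PQ)$,
\[
(\sbas_{m,i}\tbp_{n-m},\tbp_{n-m-j}^\tr)=(\sbas_{m,i}\tbp_{n-m-j},\tbp_{n-m}^\tr)^\tr .
\]
The right-hand side is exactly the transpose of the coefficient $M^{j}_{n-j,m,i}$, because $n-j-m=n-m-j$ and $n-j-m+j=n-m$. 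Substituting yields \eqref{eq:recurrence_on}, so only the matrices with $0\le k\le m$ remain as independent data. The left-hand side of \eqref{eq:recurrence_on} should read $\tbp_{n-m}$; I would note this as a typographical point.

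For the symmetry claim, take the $k=0$ coefficient $M^0_{n,m,i}=(\sbas_{m,i}\tbp_{n-m},\tbp_{n-m}^\tr)$. Its $(a,b)$ entry is $\cL(\sbas_{m,i}\,\tilde p_a\tilde p_b)$, which is symmetric in $a,b$ by commutativity of $\Sym(W^\gr)$. Equivalently, \eqref{eq:M_identity} with $k=0$ and $H=I$ gives $M^0_{n,m,i}=(M^0_{n,m,i})^\tr$. The statement writes the symmetric matrix as $M^{n-m}_{n,m,i}$; this is the square matrix in $\Mat(r_{n-m},r_{n-m})$, i.e.\ the $k=0$ (unshifted-index $n-m$) term, reflecting the two indexings used in the proof of \Cref{prop:relation}.

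There is no real obstacle here. The only delicate step is index bookkeeping: matching the reindexing $k\mapsto n-m+k$ in \Cref{prop:relation} with the shifts $n\mapsto n-j$, and checking that the dimensions $\Mat(r_{n-m-j},r_{n-m})^\tr=\Mat(r_{n-m},r_{n-m-j})$ agree.
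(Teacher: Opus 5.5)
Your proposal is correct and is essentially the paper's argument: the paper's proof is the one-line observation that setting $H_n = I$ in \Cref{prop:relation} turns \eqref{eq:M_identity} into $M^{-j}_{n,m,i} = (M^{j}_{n-j,m,i})^\tr$, and for $k=0$ into symmetry of the diagonal block. Your extra index bookkeeping is consistent with this, as are your reading of $M^{n-m}_{n,m,i}$ as the unshifted label of the $k=0$ block and your correction of $\bp_{n-m}$ to $\tbp_{n-m}$ on the left-hand side.
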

\begin{proof}
    This is immediate from~\Cref{prop:relation} since $H_n = I$ is the identity for all $n$.
\end{proof}

\subsection{Rank condition}
Now, we will consider rank conditions that the matrices $A_{n,m,i}$ and $C_{n,m,i}$ must satisfy.
We will consider \emph{column joint matrices}, which stack a set of matrices in a column. For fixed $n \in \N$, suppose $\{Q_{n,m,i}\}_{m \in [n], \, i \in [N_m]}$ is a set of matrices where $Q_{n,m,i}$ is of size $(p_m, q)$,
then we define
\begin{align} \label{eq:joint_matrix}
    Q_{n,m} \coloneqq \pmat{Q_{n,m,1} \\ Q_{n,m,2} \\ \vdots \\ Q_{n,m,N_m}} \quad \andd \quad Q_n \coloneqq \pmat{Q_{n,1} \\ Q_{n,2} \\ \vdots \\ Q_{n, n}}.
\end{align}
Here $Q_{n,m} \in \Mat(N_m p_m, q)$ and $Q_n \in \Mat(K, q)$, where $K = \sum_{m=1}^{n} N_m p_m$. One particular joint matrix that will be important is $A_n \in \Mat(R_{n}, r_{n})$, built from $A_{n,m,i} \in \Mat(r_{n-m}, r_{n})$, where
\begin{align}
    R_{n} = \sum_{k=1}^{n} N_k \cdot r_{n-k}.
\end{align} 

Let $G_n \in \Mat(r_n, r_n)$ be the leading-coefficient matrix of $\bp_n$; that is, the matrix $G_n$ such that\footnote{Note that these are leading coefficients in terms of the shuffle monomials (with respect to total degree). Thus, even for the monic polynomials defined by Gram-Schmidt in~\eqref{eq:ptilde}, $G$ is not necessarily the identity.} 
\begin{align}
    \bp_n = G_n \bw_n + \bq,
\end{align}
where $\bw_n$ consist of total degree $n$ monomials, and $\bq$ has total degree strictly less than $n$.
Next, let $L_{n,m,i} \in \Mat(r_{n-m}, r_{n})$ defined by
\[
    L_{n,m,i} \sbasvec_{n} = \sbas_{m,i} \cdot \sbasvec_{n-m}.
\]

\begin{lemma} \label{lem:L_rank}
    For each $n \in \N_0$, $m \in [n]$ and $i \in [N_m]$, the matrix $L_{n,m,i}$ satisfies
    \[
        L_{n,m,i} \cdot L_{n,m,i}^\tr = I.
    \]
    Moreover,
    \[
        \rank(L_{n,m,i}) = r_{n-m} \quad \andd \quad \rank(L_n) = r_{n}.
    \]
\end{lemma}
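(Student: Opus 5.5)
Since $\Sym(W^\gr)$ is a polynomial algebra in the indeterminates $w_{m,i}$, the plan is to read off $L_{n,m,i}$ directly and check that it is a partial permutation matrix. Each entry of $\sbasvec_{n-m}$ is a monomial $\sbasvec^\balpha$ with $|\balpha| = n-m$. Multiplying by $w_{m,i}$ gives the monomial $\sbasvec^{\balpha + e_{m,i}}$, where $e_{m,i}$ is the multi-index with a single $1$ in position $(m,i)$; this has total degree $n$ and so appears exactly once among the entries of $\sbasvec_n$. Since the entries of $\sbasvec_n$ are linearly independent, $L_{n,m,i}$ is uniquely determined. Its row indexed by $\balpha$ is the standard basis row vector $e^\tr_{\balpha+e_{m,i}}$: all entries $0$ except a single $1$.

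Next I will check that distinct rows have their $1$ in distinct columns. This holds because $\balpha \mapsto \balpha + e_{m,i}$ is injective, i.e.\ the polynomial ring has no zero divisors and cancellation of monomials is possible. A matrix whose rows are distinct standard basis vectors satisfies $L L^\tr = I$: the $(\balpha,\beta)$ entry is the inner product $\langle e_{\balpha+e_{m,i}}, e_{\beta+e_{m,i}}\rangle = \delta_{\balpha\beta}$. Hence $L_{n,m,i}$ has full row rank $r_{n-m}$.

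For the joint matrix $L_n$, stacked as in \eqref{eq:joint_matrix}, the rows are the standard vectors $e_{\balpha + e_{m,i}}$ over all $m\in[n]$, $i\in[N_m]$ and $|\balpha| = n-m$. I will show that every standard basis vector $e_{\boldsymbol\gamma}$ with $|\boldsymbol\gamma| = n$ occurs among these rows. For $n\geq 1$, any such $\boldsymbol\gamma$ is nonzero, so it has some $\gamma_{m,i}\geq 1$ with $m \leq n$. Then $\balpha = \boldsymbol\gamma - e_{m,i}$ has total degree $n-m \geq 0$ and yields $e_{\boldsymbol\gamma}$ as the row of $L_{n,m,i}$ indexed by $\balpha$. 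So the row space of $L_n$ is all of $\R^{r_n}$, and $\rank(L_n) = r_n$ (the row count $R_n$ exceeds $r_n$ in general, which is fine).

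There is no serious obstacle here. The only point requiring care is the convention for $n=0$ (where $[n]$ is empty and the statement is vacuous) and ensuring that $m$ ranges only over $[n]$, so that $n-m\geq 0$. The surjectivity step is where $m\le n$ is used.
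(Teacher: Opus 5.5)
Your proof is correct and is essentially the paper's: the rows of $L_{n,m,i}$ are distinct standard basis vectors (your injectivity of $\balpha\mapsto\balpha+e_{m,i}$ makes explicit the distinct-columns point that the paper's ``thus'' passes over), and $\rank(L_n)=r_n$ rests on every degree-$n$ monomial being divisible by some $w_{m,i}$ with $m\le n$, which you use to show the rows of $L_n$ span $\R^{r_n}$ while the paper uses it to show $\ker L_n=0$. One small correction: at $n=0$ the joint claim is not vacuous but false ($L_0$ has no rows while $r_0=1$), so it should be read for $n\ge 1$, which is the only case used later.
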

\begin{proof}
    By definition, each row of $L_{n,m,i}$ has exactly one element equal to $1$; otherwise the elements are $0$. Thus, we have $L_{n,m,i} \cdot L_{n,m,i}^\tr = I$ and $\rank(L_{n,m,i}) = r_{n-m}$. Now, let
    \[
        \cN_n \coloneqq \{ \balpha \in \N_0^{N_{\leq n}} \, : \, |\balpha| = n\} \quad \andd \quad \cN_{n,m,i} \coloneqq \{ \balpha \in \cN_n \, : \, \alpha_{m,i} \neq 0\},
    \]
    where the $\balpha$ denotes a graded exponent vector. Then, let $\ba = (a_\balpha)_{\balpha \in \cN_{n}} \in \Mat(r_{n}, 1)$ be a column vector. Then, $L_{n,m,i}$ is a map which projects $\ba$ onto its restriction onto $\cN_{n, m,i}$. Now, if $L_n \ba = 0$, then $L_{n,m,i} \ba = 0$ for all $m \in [n]$ and $i \in [N_m]$. However, since $\cup_{m,i} \cN_{n,m,i} = \cN_n$, we have $\ba = 0$, so $L_n$ has full rank. 
\end{proof}

Next, for fixed $n \in \N$, we compare the leading coefficient matrices on each side of~\eqref{eq:recurrence} to get
\begin{align} \label{eq:M_L_relation}
    G_{n-m} L_{n,m,i} = A_{n,m,i} G_{n}
\end{align}
for any $m \in [n]$ and $i \in [N_m]$. 

\begin{proposition} \label{prop:rank_condition}
    For each $n \in \N$, $m \in [n]$ and $i \in [N_m]$,
    \begin{align} \label{eq:A_C_rank_individual}
        \rank(A_{n,m,i}) = \rank(C_{n,m,i}) = r_{n-m}.
    \end{align}
    Let $A_{n} \in \Mat(R_{n}, r_{n})$ be the joint matrix of $A_{n,m,i}$ and $C_n^\tr \in \Mat(R_n, r_n)$ be the joint matrix of $C_{n,m,i}^\tr \in \Mat(r_{n-m}, r_n)$. Then, 
    \begin{align} \label{eq:A_C_rank_joint}
        \rank(A_n) = \rank(C_n^\tr) = r_{n}.
    \end{align}
\end{proposition}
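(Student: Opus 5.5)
The plan is to read everything off the leading-coefficient identity \eqref{eq:M_L_relation}, $G_{n-m} L_{n,m,i} = A_{n,m,i} G_n$, together with \Cref{lem:L_rank}, and then pass from $A$ to $C$ via \eqref{eq:A_C_relation}.

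The first step is to show that every $G_n$ is invertible. By block orthogonality, $\bp_n$ is orthogonal to all polynomials of total degree $<n$. Hence $(\bw_n, \bp_n^\tr) = (G_n^{-1}\cdots)$-type manipulations reduce to the following. Write $\bw_n = G' \bp_n + (\text{lower-degree part})$, which is possible since $\{\bp_k\}_{k\le n}$ spans polynomials of degree $\le n$. This spanning holds because $(\bw_n,\bp_n^\tr)$ is invertible, which forces the $\bp_k$ to be linearly independent, and a dimension count then gives the span. Comparing leading coefficients gives $G' G_n = I$, so $G_n$ is invertible. Equivalently, if $\mathbf{c}^\tr G_n = 0$ then $\mathbf{c}^\tr\bp_n$ has degree $<n$ and is orthogonal to $\bp_n$. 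Since $\mathbf{c}^\tr\bp_n \in \mathrm{span}\{\bp_k:k<n\}$, we get $\mathbf{c}^\tr H_n = 0$, and hence $\mathbf{c} = 0$ because $H_n$ is invertible.

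With $G_n, G_{n-m}$ invertible, we have $A_{n,m,i} = G_{n-m} L_{n,m,i} G_n^{-1}$, so $\rank A_{n,m,i} = \rank L_{n,m,i} = r_{n-m}$ by \Cref{lem:L_rank}. Stacking the identities over $m\in[n]$, $i\in[N_m]$ gives $A_n = \mathrm{diag}(G_{n-m})_{m,i}\, L_n\, G_n^{-1}$, where the block-diagonal factor is invertible. Therefore $\rank A_n = \rank L_n = r_n$, again by \Cref{lem:L_rank}.

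For $C$, \eqref{eq:A_C_relation} gives $C_{n,m,i}^\tr = H_{n-m}^{-1} A_{n,m,i} H_n$. Since $H_{n-m}$ and $H_n$ are invertible, $\rank C_{n,m,i} = \rank A_{n,m,i} = r_{n-m}$. Stacking again, $C_n^\tr = \mathrm{diag}(H_{n-m}^{-1})\, A_n\, H_n$, so $\rank C_n^\tr = r_n$.

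The only genuinely non-formal step is the invertibility of $G_n$, which rests on the spanning argument. Everything after that is bookkeeping with invertible block-diagonal factors.
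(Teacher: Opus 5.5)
Your proof is correct and is essentially the paper's: the paper also gets the ranks of $A_{n,m,i}$ and $A_n$ from $G_{n-m}L_{n,m,i}=A_{n,m,i}G_n$ (stacked as $\bG L_n = A_n G_n$) together with \Cref{lem:L_rank}, and then passes to $C$ via $A_n H_n = \bH C_n^\tr$. The one addition is your justification that $G_n$ is invertible, which the paper simply asserts; your spanning argument is what carries this, because $H_n = G_n(\bw_n,\bp_n^\tr)$ makes invertibility of $H_n$ equivalent to that of $G_n$, so the ``equivalently'' variant via $H_n$ is not an independent proof.
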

\begin{proof}
    From~\eqref{eq:M_L_relation} and since all $G_n$ are invertible, $\rank(A_{n,m,i}) = r_{n-m}$ from~\Cref{lem:L_rank}. Furthermore, since $H_n$ is invertible, we also have $\rank(C_{n,m,i}) = r_{n-m}$ from~\eqref{eq:M_identity}. Now, we define 
    \begin{align} \label{eq:bGm}
        \bG_{m} \coloneqq G_m^{\oplus N_{n-m}} \in \Mat(N_{n-m} r_{m}, N_{n-m} r_{m}) \andd \bG \coloneqq \bG_{n-1} \oplus \ldots \oplus \bG_{0} \in \Mat(R_{n}, R_{n}).
    \end{align}
    Let $L_n \in \Mat(R_{n}, r_{n})$ be the joint matrix of $L_{n,m,i}$.
    Then, from~\eqref{eq:M_L_relation} and the definition of the joint matrix $A_n$ from~\eqref{eq:joint_matrix}, we have $\bG L_n = A_n G_{n}$, written out in matrix form as 
    \begin{align} \label{eq:leading_coefficient_relation}
        \bG L_n = \pmat{\bG_{n-1} & 0 & \ldots & 0 \\ 0 & \bG_{n-2} & \ldots & 0 \\ \vdots & \vdots &\ddots & \vdots \\ 0 & 0 &\ldots & \bG_0} \pmat{L_{n,1} \\ L_{n,2} \\ \vdots \\ L_{n,n}} = \pmat{A_{n,1} \\ A_{n,2} \\ \vdots \\ A_{n,n}} G_{n} = A_n G_n
    \end{align}
    
    Note that $\bG$ is invertible and thus $\rank(A_n) = \rank(L_n) = r_{n}$.
    Next, we define
    \begin{align} \label{eq:bHm}
        \bH_{m} \coloneqq H_m^{\oplus N_{n-m}} \in \Mat(N_{n-m} r_{m}, N_{n-m} r_{m}) \andd \bH \coloneqq \bH_{n-1} \oplus \ldots \oplus \bH_{0} \in \Mat(R_{n}, R_{n}). 
    \end{align}
    Then,~\eqref{eq:A_C_relation} implies $A_n H_n = \bH C_n^\tr$. Because $\bH$ is invertible, we get $\rank(C_n^\tr) = \rank(A_n) = r_n$.
\end{proof}

Because the matrix $A_n$ has full rank, there exists a generalized left inverse $D_n^\tr \in \Mat(r_n, R_n)$, which is expressed as the row joint matrix of $D_{n,m,i}^\tr \in \Mat(r_n, r_{n-m})$. More explicitly,
\[
    D_{n,m}^\tr \coloneqq \pmat{D_{n,m,1}^\tr & \ldots & D_{n,m,N_m}^\tr} \quad \andd \quad D_n^\tr \coloneqq \pmat{D_{n,1}^\tr & \ldots & D_{n,n}^\tr}.
\]
This generalized left inverse (which may not be unique) satisfies
\begin{align} \label{eq:D_left_inverse}
    D_n^\tr A_n = \sum_{m=1}^n \sum_{i=1}^{N_m} D_{n,m,i}^\tr \cdot A_{n,m,i} = I \in \Mat(r_n, r_n).
\end{align}

\begin{proposition} \label{prop:gen_inverse_relation}
    Let $D^\tr_n$ be a generalized inverse of $A_n$. Then, 
    \[
        \bp_{n} = \sum_{m=1}^{n} \sum_{i=1}^{N_m} \left(\sbas_{m,i} D_{n,m,i}^\tr \bp_{n-m}- \sum_{k=-m}^{m-1} D^\tr_{n,m,i} M^k_{n,m,i} \bp_{n-m+k}\right)
    \]
\end{proposition}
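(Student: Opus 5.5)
The identity follows by multiplying the recurrence \eqref{eq:recurrence} on the left by the blocks of the generalized left inverse and summing.

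Fix $n$, and for each $m \in [n]$ and $i \in [N_m]$ take the recurrence \eqref{eq:recurrence} from \Cref{prop:relation}:
\[
\sbas_{m,i}\bp_{n-m} = A_{n,m,i}\bp_n + \sum_{k=-m}^{m-1} M^k_{n,m,i}\bp_{n-m+k}.
\]
Here the top term $k=m$ has been isolated and, by definition, $M^m_{n,m,i}=A_{n,m,i}$. We left-multiply both sides by the constant matrix $D^\tr_{n,m,i}\in\Mat(r_n,r_{n-m})$. Because $\sbas_{m,i}$ is a scalar element of the commutative algebra $\Sym(W^\gr)$, it commutes with constant matrices, so
\[
D^\tr_{n,m,i}\sbas_{m,i}\bp_{n-m} = \sbas_{m,i}D^\tr_{n,m,i}\bp_{n-m}.
\]
This gives, for each pair $(m,i)$,
\[
\sbas_{m,i}D^\tr_{n,m,i}\bp_{n-m} - \sum_{k=-m}^{m-1}D^\tr_{n,m,i}M^k_{n,m,i}\bp_{n-m+k} = D^\tr_{n,m,i}A_{n,m,i}\bp_n .
\]

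Next we sum over $m\in[n]$ and $i\in[N_m]$. On the right-hand side this produces $\bigl(\sum_{m,i}D^\tr_{n,m,i}A_{n,m,i}\bigr)\bp_n$. By the left-inverse identity \eqref{eq:D_left_inverse}, which uses the full-rank statement $\rank(A_n)=r_n$ of \Cref{prop:rank_condition}, this sum of matrices equals $I$. The right-hand side therefore reduces to $\bp_n$, and the left-hand side is exactly the claimed expression.

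There is no real obstacle; only bookkeeping needs checking. First, the matrix dimensions are compatible: $D^\tr_{n,m,i}$ is $r_n\times r_{n-m}$ and $M^k_{n,m,i}$ is $r_{n-m}\times r_{n-m+k}$. Second, the range $-m\le k\le m$ in \eqref{eq:recurrence} may formally include indices with $n-m+k<0$. For those indices the corresponding $M^k_{n,m,i}$ are zero, since by \eqref{eq:M_def} $M^k$ vanishes below degree $n-2m$, or equivalently we adopt the convention $\bp_j=0$ for $j<0$. So the formula holds as stated.
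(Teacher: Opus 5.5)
Your proposal is correct and is the same argument as the paper's: isolate the top term $A_{n,m,i}\bp_n$ in \eqref{eq:recurrence}, left-multiply by $D^\tr_{n,m,i}$ (commuting it past the scalar $\sbas_{m,i}$), sum over $m\in[n]$, $i\in[N_m]$, and collapse $\sum_{m,i}D^\tr_{n,m,i}A_{n,m,i}$ to $I$ via \eqref{eq:D_left_inverse}. One small quibble with your bookkeeping remark: terms with $n-m+k<0$ are handled by the convention $\bp_j=0$ (equivalently $r_j=0$) for $j<0$, not by \eqref{eq:M_def}. That equation's vanishing statement concerns absolute degrees below $n-2m$, and such indices never fall below $n-2m$ since $k\geq -m$.
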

\begin{proof}
    We multiply the relation from~\eqref{eq:recurrence} by $D_{n,m,i}^\tr$ to get
    \[
        \sbas_{m,i} D^\tr_{n,m,i} \bp_{n-m} = D^\tr_{n,m,i} A_{n,m,i} \bp_n + \sum_{k=-m}^{m-1} D^\tr_{n,m,i}M^k_{n,m,i} \bp_{n-m+k}.
    \]
    Next, we sum over $m \in [n]$ and $i \in [N_m]$ to get
    \[
        \sum_{m=1}^{n} \sum_{i=1}^{N_m} \left(\sbas_{m,i} D_{n,m,i}^\tr \bp_{n-m}\right) = \left(\sum_{m=1}^{n} \sum_{i=1}^{N_m} D^\tr_{n,m,i} A_{n,m,i}\right)\bp_n +  \sum_{m=1}^{n} \sum_{k=-m}^{m-1} \left( \sum_{i=1}^{N_m} D^\tr_{n,m,i} M^k_{n,m,i} \right) \bp_{n-m+k}.
    \]
    Then, applying~\eqref{eq:D_left_inverse}, we obtain the desired result. 
\end{proof}

\subsection{Favard's theorem} \label{ssec:favard}

In the previous sections, we showed that given a quasi-definite functional on $\Sym(W^\gr)$, orthogonal polynomials satisfy a recurrence relation where the defining matrices must satisfy certain rank conditions. Here, we will prove the converse: a generalization of Favard's theorem.


\begin{theorem} \label{thm:favard_quasi}
    Let $\bp = \{\bp_n\}_{n=0}^\infty$ be an arbitrary sequence where $\bp_n \in \Sym(W^\gr)^{r_n}$ is a column vector of length $r_n$, and $\bp_0 = 1$. Then, the following statements are equivalent.
    \begin{enumerate}
        \item There exists a quasi-definite $\cL$ under which $\{\bp_n\}_{n=0}^\infty$ is a block orthogonal basis in $\Sym(W^\gr)$.
        \item For $n \in \N_0$, $m \in [n]$, $i \in [N_m]$ and $-m \leq k \leq m$, there exist  $M^k_{n, m, i} \in \Mat(r_{n-m}, r_{n-m+k})$ where
        \begin{enumerate}
            \item the polynomials $\bp_n$ satisfy the relation in~\eqref{eq:recurrence}, and
            \item the matrices $A_{n,m,i} \coloneqq M^{m}_{n,m,i}$ and $C_{n,m,i} \coloneqq (M^{-m}_{n+m,m,i})^\tr$ satisfy~\eqref{eq:A_C_rank_individual} and~\eqref{eq:A_C_rank_joint}.
        \end{enumerate}
    \end{enumerate}
\end{theorem}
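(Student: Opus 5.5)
The direction (1)$\Rightarrow$(2) is already essentially contained in \Cref{prop:relation} and \Cref{prop:rank_condition}. Given a quasi-definite $\cL$ for which $\{\bp_n\}$ is block orthogonal, \Cref{prop:relation} supplies the matrices $M^k_{n,m,i}$ and the relation \eqref{eq:recurrence}. \Cref{prop:rank_condition} then gives the rank conditions \eqref{eq:A_C_rank_individual} and \eqref{eq:A_C_rank_joint}. That proposition relied on $G_n$ being invertible. We check this first: since $\{\bp_n\}$ is a basis and $\bp_n$ has total degree $n$, the entries of $\bp_0,\dots,\bp_n$ span the polynomials of degree $\le n$. Counting dimensions, $\bp_n$ modulo lower degree spans the degree-$n$ monomials, so $G_n$ is invertible.

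For (2)$\Rightarrow$(1), I would follow the classical Favard argument \cite[Thm.~3.3.7]{dunkl_orthogonal_2014}, adapted to the graded setting.

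\textbf{Step 1: the $\bp_n$ form a graded basis.} I would prove by induction on $n$ that each $\bp_n$ has total degree $n$ with invertible leading matrix $G_n$. Comparing top-degree parts in \eqref{eq:recurrence} gives \eqref{eq:M_L_relation}, namely $G_{n-m}L_{n,m,i}=A_{n,m,i}G_n$, which stacks to $\bG L_n = A_n G_n$. By induction $\bG$ is invertible, and \Cref{lem:L_rank} gives $\rank(L_n)=r_n$. Hence $G_n$ is determined by the left inverse of $A_n$, as in \Cref{prop:gen_inverse_relation}.

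The subtle point is that the recurrence \emph{defines} $\bp_n$ only through $A_n$. I would use the generalized-inverse formula of \Cref{prop:gen_inverse_relation} to show that $\bp_n$ is a polynomial of degree at most $n$ with leading matrix $G_n = D_n^\tr\bG L_n$. To get invertibility of $G_n$, I would use $\rank(A_n)=r_n$ together with $A_nG_n=\bG L_n$: the right-hand side has rank $r_n$, which forces $\rank G_n = r_n$. It follows that the union of the entries of all $\bp_n$ is a basis of $\Sym(W^\gr)$.

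\textbf{Step 2: define $\cL$.} Set $\cL(1)=1$ and $\cL(\bp_n)=0$ for $n\ge1$, extended linearly using the basis. The goal is then to show $\cL(\bp_j\bp_n^\tr)=0$ for $j<n$, and that $H_n:=\cL(\bp_n\bp_n^\tr)$ is invertible.

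For orthogonality, I would induct on $j$. Every polynomial of degree $j$ is a combination of products $\sbas_{m,i}q$ with $\deg q = j-m$, together with constants. By \eqref{eq:recurrence}, the product $\sbas_{m,i}\bp_{j-m}$ is a combination of $\bp_{j-2m},\dots,\bp_j$. The step I would use is: \[\cL(\sbas_{m,i}\bp_{j-m}\bp_n^\tr) = \cL\bigl(\bp_{j-m}(\sbas_{m,i}\bp_n)^\tr\bigr).\] Expanding $\sbas_{m,i}\bp_n$ by \eqref{eq:recurrence} with $n\mapsto n+m$ gives indices $n-m,\dots,n+m$, all of which exceed $j-m$, so the induction hypothesis applies. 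This requires a double induction on $(j,n)$ that is set up carefully, starting from the base case $j=0$, which holds by definition of $\cL$.

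\textbf{Step 3: nondegeneracy (main obstacle).} The hardest step is showing that each $H_n$ is invertible, and that it is related to the $C$-matrices via \eqref{eq:A_C_relation}, i.e.\ $A_{n,m,i}H_n = H_{n-m}C_{n,m,i}^\tr$. From the orthogonality already established, pairing \eqref{eq:recurrence} with $\bp_{n-m}$ in two ways yields this identity; note that here $C$ is the transpose convention used in the statement. Stacking over $(m,i)$ gives $A_nH_n=\bH C_n^\tr$. With $\bH$ invertible by induction and $\rank(C_n^\tr)=r_n$, the right-hand side has rank $r_n$. Multiplying by the left inverse $D_n^\tr$ gives $H_n = D_n^\tr\bH C_n^\tr$, which has rank $r_n$ because $\rank(A_nH_n)=r_n$. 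Hence $H_n$ is invertible.

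Finally, quasi-definiteness follows by block-diagonalizing: for each $n$, choose a basis of $\mathrm{span}(\bp_n)$ that diagonalizes the symmetric invertible matrix $H_n$ with nonzero diagonal entries. This is possible over $\R$ by Sylvester's law of inertia.
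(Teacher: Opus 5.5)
Your proposal is correct and follows the paper's proof essentially step for step. The shared steps are: invertibility of $G_n$ from $\bG L_n = A_n G_n$ and $\rank(A_n)=r_n$; the functional $\cL(1)=1$, $\cL(\bp_n)=0$; orthogonality by induction via $\cL(\sbas_{m,i}\bp_{j-m}\bp_n^\tr)=\cL(\bp_{j-m}(\sbas_{m,i}\bp_n)^\tr)$ and the recurrence; and invertibility of $H_n$ from $A_nH_n=\bH C_n^\tr$. Your two additions fill in points the paper leaves implicit: deriving $\deg\bp_n\le n$ from the generalized-inverse formula before using leading coefficients, and diagonalizing each $H_n$ by congruence to meet the definition of quasi-definiteness (that diagonalization comes from the spectral theorem, whereas Sylvester's law only concerns invariance of the inertia).
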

\begin{proof}
    The forward direction is given by~\Cref{prop:relation,prop:rank_condition}. We will now prove the converse direction. \medskip

    \textbf{$\bp$ is a basis.} To begin, we must show that $\bp$ forms a basis of $\Sym(W^\gr)$; it suffices to show that the leading coefficient matrix $G_n$ of $\bp_n$ is invertible. Because the polynomials satisfy the relation in~\eqref{eq:recurrence}, the leading coefficient matrices satisfy $\bG L_n = A_n G_n$ from~\eqref{eq:leading_coefficient_relation} (and using the same definition of $\bG$ from there). We will now show that $G_n$ is invertible by induction. For $n=0$, we have $\bp_0 = 1$, so $G_0 = 1$. Now, suppose $G_0, \ldots, G_{n-1}$ are invertible. By the definition of $\bG$ in~\eqref{eq:bGm}, this implies that $\bG$ is invertible. Therefore, we have
    \[
        \rank(A_n G_n) = \rank(\bG L_n) = \rank(L_n) = r_n,
    \]
    by~\Cref{lem:L_rank}. Then, using the rank hypothesis of $\rank(A_n) = r_n$, and the rank inequality for product matrices, we have
    \[
        \rank(G_n) \ge \rank(A_n G_n) \ge \rank(A_n) + \rank(G_n) - r_n = \rank(G_n).
    \]
    Thus, $\rank(G_n) = \rank(A_n G_n) = r_n$, so $G_n$ is invertible. \medskip

    \textbf{$\bp$ is block orthogonal.} Because $\bp$ is a basis of $\Sym(W^\gr)$, we can define $\cL: \Sym(W^\gr) \to \R$ by
    \[
        \cL(1) = 1, \quad \cL(\bp_n) = 0 \text{ for } n \ge 1.
    \]
    Now, we will show that $\bp$ is block orthogonal with respect to this linear functional. In particular, we will use induction to show that
    \begin{align} \label{eq:favard_orthog_condition}
        \cL(\bp_k \bp_j^\tr) = 0 \quad \text{for} \quad k \neq j.
    \end{align}
    Suppose~\eqref{eq:favard_orthog_condition} holds for $k,j$ such that $0 \leq k \leq n-1$ and $j > k$. Note that this holds by definition when $n =1$. Now, we note that~\Cref{prop:gen_inverse_relation} only used the recurrence in~\eqref{eq:recurrence} and the rank condition in~\eqref{eq:A_C_rank_joint}. Thus, for $\ell > n$, we have
    \begin{align*}
        \cL(\bp_{n}\bp_\ell^\tr) &= \cL \left(\sum_{m=1}^{n} \sum_{i=1}^{N_m} \sbas_{m,i} D_{n,m,i}^\tr \bp_{n-m} \bp_\ell^\tr\right) = \cL \left(\sum_{m=1}^{n} \sum_{i=1}^{N_m}  D_{n,m,i}^\tr \bp_{n-m} \left(\sum_{k=-m}^{m} M^k_{\ell+m, m,i} \bp_{\ell+k}\right)^\tr\right) = 0,
    \end{align*}
    where we use~\Cref{prop:gen_inverse_relation} and the induction hypothesis in the first equality, the relation in~\eqref{eq:recurrence} on $\sbas_{m,i} \bp_\ell^\tr$ in the second, and the induction hypothesis in the third. Thus,~\eqref{eq:favard_orthog_condition} holds. \medskip

    \textbf{$\cL$ is nondegenerate.} Now, we will show that $H_n = \cL(\bp_n \bp_n^\tr)$ is invertible. First, we note that $H_n$ is symmetric by definition. Next, by block orthogonality,~\eqref{eq:M_def} holds, and therefore~\eqref{eq:A_C_relation} also holds. Now, using the definition of $\bH$ from~\eqref{eq:bHm}, we have
    \begin{align} \label{eq:favard_quasi_An_Cn}
        A_n H_n = \bH C_n^\tr.
    \end{align}
    We will once again show that $H_n$ is invertible by induction. Because $\cL(1) = 1$, we have $H_0 = 1$, which is invertible. Now, suppose that $H_0, \ldots, H_{n-1}$ is invertible, which implies that $\bH$ is invertible. Then, using the rank condition $\rank(C_n^\tr) = r_n$ from~\eqref{eq:A_C_rank_joint} by hypothesis, we have
    \[
        \rank(A_n H_n) = \rank(\bH C_{n}^\tr) = \rank(C_n^\tr) = r_n
    \]
    Then, using the rank inequality for product matrices and the rank condition for $A_n$ again, 
    \[
        \rank(H_n) \ge \rank(A_n H_n) \ge \rank(A_n) + \rank(H_n) - r_n = \rank(H_n).
    \]
    Thus, $\rank(H_n) = \rank(A_n H_n) = r_n$, which implies that $H_n$ is invertible. \medskip

    Finally, the fact that $H_n$ is invertible implies that $\cL$ is a quasi-definite linear functional which makes $\bp$ a block orthogonal basis of $\Sym(W^\gr)$. 
\end{proof}

Next, we will show Favard's theorem for positive-definite functionals.

\begin{theorem} \label{thm:favard_positive}
    Let $\bp = \{\tbp_n\}_{n=0}^\infty$ be an arbitrary sequence where $\tbp_n \in \Sym(W^\gr)^{r_n}$ is a column vector of length $r_n$, and $\tbp_0 = 1$. Then, the following statements are equivalent.
    \begin{enumerate}
        \item There exists a positive-definite $\cL$ under which $\{\tbp_n\}_{n=0}^\infty$ is an orthonormal basis in $\Sym(W^\gr)$.
        \item For $n \in \N_0$, $m \in [n]$, $i \in [N_m]$ and $0 \leq k \leq m$, there exist matrices $M^k_{n, m, i} \in \Mat(r_{n-m}, r_{n-m+k})$ such that
        \begin{enumerate}
            \item the polynomials $\tbp_n$ satisfy the relation in~\eqref{eq:recurrence_on}, and
            \item the matrices $A_{n,m,i} \coloneqq M^{m}_{n,m,i}$ and $C_{n,m,i} \coloneqq (M^{m}_{n-m,m,i})^\tr$ satisfy~\eqref{eq:A_C_rank_individual} and~\eqref{eq:A_C_rank_joint}.
        \end{enumerate}
    \end{enumerate}
\end{theorem}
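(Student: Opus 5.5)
The plan is to reduce \Cref{thm:favard_positive} to \Cref{thm:favard_quasi}, and then add the two extra ingredients: orthonormality ($H_n=I$) and positivity of $\cL$.

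\textbf{Forward direction.} Suppose $\cL$ is positive-definite and $\{\tbp_n\}$ is orthonormal. Then $\cL$ is in particular quasi-definite, so \Cref{prop:relation} yields the relation \eqref{eq:recurrence} with $H_n=I$. By \Cref{cor:relation_on} this relation takes the form \eqref{eq:recurrence_on}, since the negative-index matrices are $M^{-k}_{n,m,i}=(M^k_{n-k,m,i})^\tr$ by \eqref{eq:M_identity}. \Cref{prop:rank_condition} then gives the rank conditions for $A_{n,m,i}=M^m_{n,m,i}$ and for $C_{n,m,i}$. Since $H_n=I$, the relation \eqref{eq:A_C_relation} reads $C_{n,m,i}^\tr=A_{n,m,i}$, which matches the definition $C_{n,m,i}=(M^m_{n-m,m,i})^\tr$ after the index shift used in \eqref{eq:recurrence_on}. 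The only work here is bookkeeping of indices.

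\textbf{Converse direction.} Define $M^{-k}_{n,m,i}:=(M^k_{n-k,m,i})^\tr$ for $1\le k\le m$. Then \eqref{eq:recurrence_on} becomes an instance of \eqref{eq:recurrence}, and the rank hypotheses are exactly \eqref{eq:A_C_rank_individual} and \eqref{eq:A_C_rank_joint}. \Cref{thm:favard_quasi} therefore gives that $\tbp$ is a basis, and that the functional $\cL$ defined by $\cL(1)=1$ and $\cL(\tbp_n)=0$ for $n\ge1$ is quasi-definite and makes $\tbp$ block orthogonal.

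It remains to show $H_n=\cL(\tbp_n\tbp_n^\tr)=I$ for all $n$. Positive-definiteness then follows at once: for $P=\sum_n a_n^\tr\tbp_n$ we get $\cL(P^2)=\sum_n|a_n|^2>0$ whenever $P\neq0$, so the entries of $\tbp$ themselves form a basis of the kind required in the definition.

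The identity $H_n=I$ is the main step, and I prove it by induction on $n$. The base case is $H_0=1$. For the inductive step, assume $H_0,\dots,H_{n-1}=I$. Block orthogonality gives the formula \eqref{eq:M_def}, so the coefficient in front of the lower block satisfies
\[
M^{-m}_{n,m,i}=\cL(\sbas_{m,i}\tbp_{n-m}\tbp_{n-2m}^\tr)\,H_{n-2m}^{-1}=\cL(\sbas_{m,i}\tbp_{n-m}\tbp_{n-2m}^\tr).
\]
Expanding $\sbas_{m,i}\tbp_{n-2m}$ by the recurrence and using orthogonality gives
\[
\cL(\sbas_{m,i}\tbp_{n-2m}\tbp_{n-m}^\tr)=A_{n-m,m,i}H_{n-m}=A_{n-m,m,i}.
\]
The prescribed lower coefficient is $A_{n-m,m,i}^\tr$, so this identity is consistent.

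For $H_n$ itself, apply the identity at level $n$ to the top block. Expanding $\sbas_{m,i}\tbp_n$ and pairing with $\tbp_{n-m}$ gives
\[
\cL(\sbas_{m,i}\tbp_{n-m}\tbp_n^\tr)^\tr=\cL(\sbas_{m,i}\tbp_n\tbp_{n-m}^\tr)=C_{n,m,i}H_{n-m}=C_{n,m,i}.
\]
By the hypothesis built into \eqref{eq:recurrence_on} this equals $A_{n,m,i}^\tr$. On the other hand, expanding $\sbas_{m,i}\tbp_{n-m}$ directly gives $\cL(\sbas_{m,i}\tbp_{n-m}\tbp_n^\tr)=A_{n,m,i}H_n$. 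Hence
\[
A_{n,m,i}H_n=A_{n,m,i}\quad\text{for all } m,i, \qquad\text{that is,}\qquad A_n(H_n-I)=0.
\]
Since $\rank(A_n)=r_n$ by \eqref{eq:A_C_rank_joint}, $A_n$ is injective, and therefore $H_n=I$.

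\textbf{Main obstacle.} The delicate part is the index bookkeeping: one must check that the transposed coefficients appearing in \eqref{eq:recurrence_on} really are the ones that pair $\tbp_n$ with $\tbp_{n-m}$ (the $k=m$ term with shifted $n$). The case $m>n-m$, where lower blocks vanish, must also be handled. I would carry this out by writing $\sbas_{m,i}\tbp_n$ via \eqref{eq:recurrence_on} at level $n+m$ and reading off its $\tbp_{n-m}$ coefficient, which is $(M^m_{n,m,i})^\tr=A_{n,m,i}^\tr$.
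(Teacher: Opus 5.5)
Your proposal is correct and follows essentially the paper's route: reduce to \Cref{thm:favard_quasi}, then prove $H_n=I$ by induction using the identity $A_nH_n=\bH C_n^\tr$, together with $C_n^\tr=A_n$ (which comes from the form of \eqref{eq:recurrence_on}) and $\bH=I$. You also make explicit two steps the paper leaves implicit: that $A_n(H_n-I)=0$ forces $H_n=I$ because $\rank(A_n)=r_n$, and that the entries of $\tbp$ then witness positive-definiteness. Your reading $C_{n,m,i}=A_{n,m,i}^\tr$ is the one the paper's proof uses; the statement's $(M^m_{n-m,m,i})^\tr$ is an index slip, as its dimensions do not match $\Mat(r_n,r_{n-m})$.
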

\begin{proof}
The forward direction is given by~\Cref{cor:relation_on} and~\Cref{prop:rank_condition}. We will now prove the converse direction. 
    From~\Cref{thm:favard_quasi}, there exists a quasi-definite linear functional $\cL$ such that $\tbp_n$ is a block orthogonal basis in $\Sym(W^\gr)$, so we only need to show that $\cL$ is positive-definite. Let $H_n = \cL(\tbp_n \tbp_n^\tr)$; it suffices to show that this is the identity for every $n \in \N_0$. Because $\tbp_0 = 1$ and $\cL(1) = 1$, we have $H_0 = 1$, and we proceed by induction on $n$. Suppose $H_k = I$ is the identity for $k \leq n-1$. Following the previous proof, $C_n = (A_n)^\tr$ implies that~\eqref{eq:favard_quasi_An_Cn} becomes
    \[
        A_n H_n = \bH A_n.
    \]
    By definition of $\bH$ from~\eqref{eq:bHm} and induction, we have $\bH = I$, so $H_n = I$ is the identity.
\end{proof}

\subsection{Jacobi matrices and measures on \texorpdfstring{$\hW$}{Ŵ}} \label{ssec:jacobi}
Favard's theorem in~\Cref{thm:favard_positive} only provides the existence of a positive-definite linear functional on $\Sym(W^\gr)$.
In this section, we consider the question of when the functional $\cL$ is induced by a measure $\mu \in \cM(\hW)$ on $\hW = \prod_{m=0}^\infty W^m$, 
\begin{align} \label{eq:measure_functional}
    \cL(P) = \int_{\hW} P(w) d\mu(w).
\end{align}
In particular, for the remainder of this section, we will work exclusively with positive-definite linear functionals, and the corresponding recurrence matrix conditions in~\Cref{thm:favard_positive}.
We do this by generalizing the notion of \emph{Jacobi matrices}, and use the spectral theory of commuting self adjoint operators (CSOs). In this section, we will be exclusively working with \emph{orthonormal polynomials}, and thus use the recurrence relation in~\Cref{cor:relation_on}. \medskip

Suppose we have a positive-definite linear functional $\cL$ on $\Sym(W^\gr)$, and let $\tbp = (\tbp_n)_{n=0}^\infty$ denote a system of orthonormal polynomials with respect to this functional. Furthermore, let $H(W^\gr) \cong \ell^2$ denote the Hilbert space completion of $\Sym(W^\gr)$, equipped with an orthonormal basis\footnote{We treat $H(W^\gr)$ as an abstract Hilbert space, and use the notation $\phi_\balpha$ to forget that these elements are polynomials.} denoted $\phi_\balpha$ as in~\eqref{eq:l_monomials}. We let $\Phi_n = (\phi_\balpha)_{|\balpha|=n}$ denote the column vector of total degree $n$ elements. 
Then, for each $m \in \N$ and $i \in [N_m]$, we collect all recurrence matrices $M_{n,m,i}^k$ into the  \emph{Jacobi matrix} $J_{m,i}$, which is a linear operator in sequence space $H(W^\gr)$.
This is defined as a semi-infinite band-diagonal block matrix, where the width of the band is $2m+1$, as
\begin{align} \label{eq:jacobi_matrix}
    J_{m,i} \coloneqq \pmat{ 
        M^0_{m,m,i} & M^1_{m,m,i} & M^2_{m,m,i} & \cdots & M^m_{m,m,i} & 0 & 0 & \cdots \\
        (M^1_{m,m,i})^\tr & M^0_{m+1,m,i} & M^1_{m+1,m,i} & \cdots & M^{m-1}_{m+1,m,i} & M^{m}_{m+1,m,i} & 0 & \cdots \\
        (M^2_{m,m,i})^\tr & (M^1_{m+1,m,i})^\tr & M^0_{m+2,m,i} & \cdots & M^{m-2}_{m+2,m,i} & M^{m-1}_{m+2,m,i} & M^{m}_{m+2,m,i} & \cdots \\
        \vdots & \vdots & \vdots & & \vdots & \vdots & \vdots &
    },
\end{align}
where the $M^k_{n,m,i}$ are given in~\Cref{cor:relation_on}. 
Note that the diagonal blocks are symmetric matrices.
Now, our aim is to show there exists a probability measure $\mu$ on $\hW$ such that $J_{m,i}$ is unitarily equivalent to the multiplication by $w_{m,i}$ operator on $L^2(\hW, \mu)$. \medskip

\subsubsection{Background on spectral theory}

We do this by using the spectral theory of a countable family of CSOs. The following summary of the relevant theory is from~\cite{samoilenko_spectral_2013}. Suppose $\cH$ is a separable Hilbert space with inner product $\langle \cdot, \cdot \rangle$. Every self-adjoint operator $T: \cH \to \cH$ has a spectral measure $E$ on $\R$. In particular, $E$ is a projection-valued Borel measure such that $T = \int_\R \lambda dE(\lambda)$, $E(\R)$ is the identity operator and $E(B \cap C) = E(B) \cap E(C)$. We say that a family $\{T_j\}_{j=1}^\infty$ of self-adjoint operators \emph{strongly commute} if their spectral measures commute,
\[
    E_i(B)E_j(C) = E_i(C) E_j(B)
\]
for any Borel sets $B,C \subset \R$. If a finite family $\{T_j\}_{j=1}^d$ commute, then the \emph{spectral measure of the commuting family $T_1, \ldots, T_d$} is a projection valued measure on $\R^d$ defined by
\[
    E(B_1 \times \ldots \times B_d) = E_1(B_1) \cdots E_d(B_d). 
\]
If we have a countably infinite family $\{T_j\}_{j=1}^\infty$, we want to obtain a spectral measure on the vector space of sequences $\R^\infty$ equipped with the product topology. Let $\cB(\R^\infty)$ denote its Borel $\sigma$-algebra.

\begin{definition}{\cite[Definition 1]{samoilenko_spectral_2013}}
    An operator-valued measure $E$ defined on $(\R^\infty, \cB(\R^\infty))$ is a \emph{spectral measure} (or a \emph{resolution of the identity}) if:
    \begin{enumerate}
        \item \textbf{(projection)} $E(B)$ is a projection on $\cH$ for all $B \in \cB(\R^\infty)$, $E(\emptyset) = 0$ and $E(\R^\infty) = I$;
        \item \textbf{(additivity)} if $\{B_j\}_{j=1}^\infty$ are mutually disjoint, then $E(\bigcup_{j=1}^\infty B_j) = \sum_{j=1}^\infty E(B_j)$;
        \item \textbf{(orthogonality)} for all $B, B' \in \cB(\R^\infty)$, we have $E(B \cap B') = E(B) E(B')$.
    \end{enumerate}
\end{definition}

\begin{theorem}{\cite[Theorem 1]{samoilenko_spectral_2013}} \label{thm:spectral1}
    For every countable family $\{T_j\}_{j=1}^\infty$ of CSOs, there exists a unique spectral measure $E$. Conversely, every spectral measure is generated by a countable family of CSOs where
    \begin{align}
        T_j = \int_{\R^\infty} \lambda_j dE(\lambda_1, \lambda_2, \ldots) = \int_\R \lambda_j dE_j(\lambda_j),
    \end{align}
    where $E_j$ is the 1D spectral measure $E_j(B) = E(\R \times \ldots \times \R \times B \times \R \times \ldots)$, where $B$ is in the $j^{th}$ spot. 
\end{theorem}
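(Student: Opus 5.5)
The plan is to build $E$ from finite-dimensional joint spectral measures and pass to $\R^\infty$ one vector at a time with the Kolmogorov extension theorem. The operators $T_j$ themselves, which may be unbounded, only enter through their bounded spectral projections $E_j(B)$.

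\textbf{Step 1 (finite families).} Fix $d$. Strong commutativity says the projections $E_1(B_1),\dots,E_d(B_d)$ pairwise commute. Hence
\[
E^{(d)}(B_1\times\cdots\times B_d)\coloneqq E_1(B_1)\cdots E_d(B_d)
\]
is again an orthogonal projection. On rectangles it is multiplicative and finitely additive.

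For each $x\in\cH$, the set function $B\mapsto\scal{E^{(d)}(B)x,x}$ is countably additive in each coordinate separately. It therefore extends, via the semiring of rectangles and Carath\'eodory, to a finite positive Borel measure $\mu^{(d)}_x$ on $\R^d$ of total mass $\|x\|^2$. Polarization gives complex measures $\mu^{(d)}_{x,y}$.

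These measures are consistent in $d$. This is the identity $E_{d+1}(\R)=I$, which gives $\mu^{(d+1)}_x(B\times\R)=\mu^{(d)}_x(B)$.

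\textbf{Step 2 (passage to $\R^\infty$, the main obstacle).} The real difficulty is countable additivity on $\cB(\R^\infty)$. An operator-valued Kolmogorov theorem is not directly available, so I would work with scalars.

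For each $x$, the consistent family $\{\mu^{(d)}_x\}_d$ consists of finite Borel measures on the Polish spaces $\R^d$ with a common total mass. Kolmogorov's extension theorem therefore yields a unique measure $\mu_x$ on $(\R^\infty,\cB(\R^\infty))$ with these marginals.

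Uniqueness of the extension, via the $\pi$--$\lambda$ theorem applied to cylinder sets, shows that the polarized measures $\mu_{x,y}$ are sesquilinear in $(x,y)$. They are also bounded, since $|\mu_{x,y}(B)|\le\|x\|\|y\|$. By Riesz representation, for each Borel $B$ there is a unique bounded operator $E(B)$ with
\[
\scal{E(B)x,y}=\mu_{x,y}(B).
\]

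\textbf{Step 3 (verifying the axioms).} On cylinder sets $E$ coincides with $E^{(d)}$. There it is a projection and satisfies $E(B\cap C)=E(B)E(C)$.

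For a fixed cylinder $C$, consider the class of Borel $B$ for which $E(B)$ is a projection and $E(B\cap C)=E(B)E(C)$. This class is a $\lambda$-system: use weak countable additivity together with monotone limits of projections. It contains the $\pi$-system of cylinders, so it is all of $\cB(\R^\infty)$. Repeating the argument in the variable $C$ gives orthogonality for all Borel pairs.

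Strong countable additivity then follows from weak additivity. For disjoint $B_j$ the projections $E(B_j)$ are mutually orthogonal, so $\sum_j\|E(B_j)x\|^2=\mu_x\bigl(\bigcup_j B_j\bigr)$ and the partial sums converge strongly. Finally $E(\R^\infty)=I$ and $E(\emptyset)=0$ hold by construction.

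\textbf{Step 4 (uniqueness and converse).} Any spectral measure whose one-dimensional marginals are the $E_j$ agrees with $E$ on cylinders, because of orthogonality. It therefore agrees everywhere by $\pi$--$\lambda$.

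Conversely, given a spectral measure $E$, define the marginals $E_j$ by pushing $E$ forward under the coordinate maps, and set $T_j=\int\lambda_j\,\mrd E=\int\lambda\,\mrd E_j$ via the standard functional calculus. Each $T_j$ is self-adjoint with spectral measure $E_j$. The orthogonality axiom gives
\[
E_i(B)E_j(C)=E(\pi_i^{-1}B\cap\pi_j^{-1}C)=E_j(C)E_i(B),
\]
so the family $\{T_j\}$ strongly commutes.
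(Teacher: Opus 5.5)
The paper does not prove this statement; it quotes it from \cite{samoilenko_spectral_2013}. It also takes the finite-family joint spectral measure $E(B_1\times\cdots\times B_d)=E_1(B_1)\cdots E_d(B_d)$ as known background.

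Your route is the standard one: finite-dimensional joint measures, a scalar Kolmogorov extension for each vector, reconstruction of $E(B)$ by Riesz, then a $\pi$--$\lambda$ upgrade to a multiplicative, strongly $\sigma$-additive projection-valued measure. The later steps are sound up to one small point noted below.

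\textbf{The gap is in Step 1}, which you treat as routine. You observe that $(B_1,\dots,B_d)\mapsto\scal{E_1(B_1)\cdots E_d(B_d)x,x}$ is countably additive in each variable separately. From this you conclude it is $\sigma$-additive on the semiring of rectangles, so that Carath\'eodory applies. That inference is not valid as stated:
\begin{itemize}
\item Separate additivity only controls partitions of the form $\bigsqcup_n B_1^n\times B_2\times\cdots$.
\item Carath\'eodory needs $\mu(R)=\sum_n\mu(R_n)$ for an arbitrary countable partition of a rectangle $R$ into rectangles $R_n$, and such a partition need not refine to a grid.
\item In general, separately $\sigma$-additive set functions on rectangles (e.g.\ signed bimeasures) need not extend to measures at all.
\end{itemize}
So this step, rather than Step 2, is where the topology of $\R$ is genuinely used. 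Step 2 is just the classical Kolmogorov theorem once the finite-dimensional marginals are genuine measures.

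The step can be repaired using positivity and regularity of $\R$. Since $\mu(B_1\times\cdots\times B_d)=\|E_1(B_1)\cdots E_d(B_d)x\|^2\ge0$, $\mu$ is monotone and finitely subadditive on the ring generated by rectangles. For compact $K_j\subset B_j$ this gives
\[
\mu(B_1\times\cdots\times B_d)-\mu(K_1\times\cdots\times K_d)\le\sum_{j=1}^d\|E_j(B_j\setminus K_j)x\|^2,
\]
and the analogous bound holds for open $U_j\supset B_j$. Inner and outer regularity of the finite Borel measures $\|E_j(\cdot)x\|^2$ on $\R$ then give $\sigma$-additivity by the usual compactness argument. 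Alternatively, simply quote the classical theorem that finitely many commuting spectral measures on $\R$ have a unique joint spectral measure on $\R^d$.

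\textbf{Smaller point in Step 3.} You take ``cylinder sets'' as the $\pi$-system and assert that $E$ is a projection there. With Step 1 as written, you only know this for cylinders with rectangular base $\{\lambda:\lambda_1\in B_1,\dots,\lambda_d\in B_d\}$, not for cylinders over arbitrary Borel subsets of $\R^d$. Use these rectangle cylinders instead. They form a $\pi$-system generating $\cB(\R^\infty)$, and on them $E=E_1(B_1)\cdots E_d(B_d)$ is visibly a multiplicative projection. With that change, your $\lambda$-system argument, strong additivity, uniqueness and converse all go through as you wrote them.
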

For a countable family $\{T_j\}_{j=1}^\infty$, a \emph{cyclic vector} is an element $\Phi_0 \in \cH$ such that
\begin{align} \label{eq:cyclicvec_general}
    \overline{\text{span}\{E(B) \Phi_0 \, : \, B \in \cB(\R^\infty)\}} = \cH.
\end{align}
The following spectral theorem is the main result we will need.

\begin{theorem}{\cite[Theorem 4]{samoilenko_spectral_2013}} \label{thm:spectral}
    Let $\{T_j\}_{j=1}^\infty$ be a countable family of CSOs on $\cH$, equipped with a cyclic vector $\Phi_0 \in \cH$ (in the sense of~\eqref{eq:cyclicvec_general}). Then, there exists a probability measure $\mu \in \cM(\R^\infty)$ and a unitary transformation $U: \cH \to L^2(\R^\infty, \mu)$ such that
    \begin{align}
        T_j = U^{-1} \lambda_j U,
    \end{align}
    where $\lambda_j : L^2(\R^\infty, \mu) \to L^2(\R^\infty, \mu)$ is the multiplication operator by the $j^{th}$ independent variable.
\end{theorem}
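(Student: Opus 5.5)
This is the multivariate spectral theorem for a countable family of strongly commuting self-adjoint operators with a cyclic vector. I will reduce it to the classical finite-dimensional spectral theorem in multiplication-operator form. Throughout I use the joint spectral measure $E$ on $(\R^\infty,\cB(\R^\infty))$ supplied by \Cref{thm:spectral1}.

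\textbf{Step 1: the measure.} Normalise $\|\Phi_0\|=1$ and set
\[
\mu(B)\coloneqq\langle E(B)\Phi_0,\Phi_0\rangle, \qquad B \in \cB(\R^\infty).
\]
The projection, additivity and orthogonality axioms show that $\mu$ is a countably additive, nonnegative measure with $\mu(\R^\infty)=\|\Phi_0\|^2=1$.

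\textbf{Step 2: the unitary on simple functions.} For a simple Borel function $f=\sum_k c_k\bone_{B_k}$, define
\[
V f\coloneqq\Big(\int f\,\mrd E\Big)\Phi_0=\sum_k c_k E(B_k)\Phi_0 .
\]
By orthogonality, $E(B)^*E(B')=E(B\cap B')$, so
\[
\langle Vf,Vg\rangle=\int f\bar g\,\mrd\mu .
\]
Thus $V$ is an isometry from simple functions into $\cH$. Simple functions are dense in $L^2(\mu)$, so $V$ extends to an isometry $L^2(\R^\infty,\mu)\to\cH$. Its range contains $E(B)\Phi_0$ for every $B$, and the range of an isometry is closed. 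The cyclicity assumption \eqref{eq:cyclicvec_general} therefore makes $V$ surjective. Set $U\coloneqq V^{-1}$, which is unitary.

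\textbf{Step 3: the intertwining.} I first check the relation on bounded functions of a single coordinate. For bounded Borel $h$ on $\R$, \Cref{thm:spectral1} identifies $E_j$ as the marginal of $E$, so
\[
h(T_j)=\int h(\lambda_j)\,\mrd E .
\]
Multiplicativity of the functional calculus gives
\[
h(T_j)Vf=V\big(h(\lambda_j)f\big)
\]
for simple $f$, and by continuity for all $f\in L^2(\mu)$. Hence $h(T_j)=U^{-1}h(\lambda_j)U$ for every bounded Borel $h$.

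\textbf{Step 4: the unbounded operators (main obstacle).} Each $T_j$ and each multiplication operator $\lambda_j$ is in general unbounded, so one must match their domains exactly, not just their actions. Apply Step 3 with $h=\bone_{[-R,R]}$. This shows that $U$ intertwines the spectral projections $E_j([-R,R])$ with multiplication by $\bone_{\{|\lambda_j|\le R\}}$. Next apply Step 3 with $h(x)=x\bone_{[-R,R]}(x)$, which gives agreement of the truncations $T_j E_j([-R,R])$ and $\lambda_j\bone_{\{|\lambda_j|\le R\}}$.

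The domain of $T_j$ consists of those $\psi$ with
\[
\int|x|^2\,\mrd\langle E_j(x)\psi,\psi\rangle<\infty,
\]
and this is the same as $\int|\lambda_j|^2|U\psi|^2\,\mrd\mu<\infty$. So $U$ maps $\mathrm{dom}(T_j)$ onto $\mathrm{dom}(\lambda_j)$. Letting $R\to\infty$ in the truncated relation then yields $T_j=U^{-1}\lambda_jU$ as an identity of self-adjoint operators, including domains.

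One further point needs care: $\mu$ is built on the infinite product $\R^\infty$. Countable additivity of $E$ on $\cB(\R^\infty)$ is exactly what \Cref{thm:spectral1} provides, so no separate Kolmogorov-extension argument is needed.
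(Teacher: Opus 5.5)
The paper gives no proof of this statement: it cites it from Samoilenko's monograph and uses it as a black box in the proof of \Cref{thm:measure_compact}. Your argument is correct and is the standard one. You take $\mu(B)=\langle E(B)\Phi_0,\Phi_0\rangle$ and show that $f\mapsto(\int f\,\mrd E)\Phi_0$ is an isometry $L^2(\mu)\to\cH$, which is onto precisely because of the spectral form of cyclicity \eqref{eq:cyclicvec_general}. You then transfer the bounded functional calculus of each marginal $E_j$, and recover the unbounded $T_j$, domain included, by spectral truncation.

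Relative to the paper, your write-up makes explicit that the infinite-family case follows formally from the one-operator proof, once the joint resolution of the identity on $\R^\infty$ from \Cref{thm:spectral1} is available. The genuinely infinite-dimensional content sits in \Cref{thm:spectral1}, which both you and the paper import: countable additivity of $E$ on $\cB(\R^\infty)$, i.e.\ a Kolmogorov-type extension for projection-valued measures.

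One small point to state explicitly in Step 3. The identity $Vg=(\int g\,\mrd E)\Phi_0$ must hold for bounded Borel $g$, not only simple $g$, because $h(\lambda_j)\bone_B$ is not simple. This follows by uniform approximation of $g$ by simple functions, using that $\mu$ is finite.
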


To apply this, we must show that the $J_{m,i}$ are CSOs and the existence of a cyclic vector. \medskip

\subsubsection{Commutativity and cyclic vectors}
Now, we return to the Jacobi matrices and start by showing conditions for commutativity along with establishing the existence of a cyclic vector. 

\begin{proposition} \label{prop:commutativity_conditions}
    Let $\tbp$ be an orthonormal basis which satisfies the relations in~\Cref{cor:relation_on}. Then for all $n_1 \leq n_2 \in \N$, $m_1, m_2 \in \N$ and $i \in [N_{m_1}], j \in [N_{m_2}]$, we have
    \begin{align} \label{eq:commutativity_conditions}
        &\sum_{r \in R^1_{1,1} } (M^{n_1 - r}_{m_1+r, m_1, i})^\tr M^{n_2-r}_{m_2 + r, m_2, j} + \sum_{r \in R^2_{1,2}} M^{r-n_1}_{n_1+m_1, m_1, i} M^{n_2 - r}_{m_2+r, m_2, j} + \sum_{r \in R^3_{1,2}} M^{r-n_1}_{n_1+m_1, m_1, i} (M^{r-n_2}_{n_2+m_2,m_2, j})^{\tr}\\
         & \quad = \sum_{r \in R^1_{2,1}} (M^{n_1 - r}_{m_2+r, m_2, j})^\tr M^{n_2-r}_{m_1 + r, m_1, i} + \sum_{r \in R^2_{2,1}} M^{r-n_1}_{n_1+m_2, m_2, j} M^{n_2 - r}_{m_1+r, m_1, i} + \sum_{r \in R^3_{2,1}} M^{r-n_1}_{n_1+m_2, m_2, j} (M^{r-n_2}_{n_2+m_1,m_1, i})^{\tr}, \nonumber
    \end{align}
    where
    \begin{align}
        R^1_{i,j} &= [n_1 - m_i, n_1] \cap [n_2 - m_j, n_2], \\
        R^2_{i,j} &= [n_1+1, n_1+m_i] \cap [n_2 - m_j, n_2], \\
        R^3_{i,j} &= [n_1 + 1, n_1 + m_i] \cap [n_2+1, n_2+m_j].
    \end{align}
\end{proposition}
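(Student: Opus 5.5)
The plan is to derive \eqref{eq:commutativity_conditions} from the commutativity of multiplication in $\Sym(W^\gr)$. For any generators $\sbas_{m_1,i}$ and $\sbas_{m_2,j}$ we have $\sbas_{m_1,i}\sbas_{m_2,j}\tbp = \sbas_{m_2,j}\sbas_{m_1,i}\tbp$. We expand each side twice using the recurrence of \Cref{cor:relation_on}, and then compare coefficients in the orthonormal basis. Equivalently, since the $\tbp_n$ are orthonormal, the identity is the $(n_1,n_2)$ block of the matrix $\bigl(\tbp_{n_1}, \sbas_{m_1,i}\sbas_{m_2,j}\tbp_{n_2}^\tr\bigr)$, computed in two ways. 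Concretely, we write
\[
\bigl(\tbp_{n_1}, \sbas_{m_1,i}\sbas_{m_2,j}\tbp_{n_2}^\tr\bigr) = \sum_{r}\bigl(\sbas_{m_1,i}\tbp_{n_1}, \tbp_r^\tr\bigr)\bigl(\tbp_r, \sbas_{m_2,j}\tbp_{n_2}^\tr\bigr).
\]
This uses self-adjointness of multiplication under $\cL$ and Parseval in the orthonormal basis; the sum is finite because of degree support. Swapping $(m_1,i)\leftrightarrow(m_2,j)$ gives the right-hand side.

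The key step is to identify each factor with a recurrence matrix. By \Cref{cor:relation_on}, applied with $n=n_1+m_1$, the block $(\sbas_{m_1,i}\tbp_{n_1},\tbp_r^\tr)$ is nonzero only for $r\in[n_1-m_1,n_1+m_1]$. There are two cases:
\begin{enumerate}
\item For $r\in[n_1+1,n_1+m_1]$ it equals $M^{r-n_1}_{n_1+m_1,m_1,i}$.
\item For $r\le n_1$ it equals the transposed term $(M^{n_1-r}_{m_1+r,m_1,i})^\tr$, obtained by rewriting the recurrence for $\sbas_{m_1,i}\tbp_r$ and transposing.
\end{enumerate}
The second factor $(\tbp_r, \sbas_{m_2,j}\tbp_{n_2}^\tr)$ is treated the same way with the roles read off around $n_2$:
\begin{enumerate}
\item For $r\le n_2$ it equals $M^{n_2-r}_{m_2+r,m_2,j}$, the leading or neutral part of the recurrence for $\sbas_{m_2,j}\tbp_r$.
\item For $r\in[n_2+1,n_2+m_2]$ it equals $(M^{r-n_2}_{n_2+m_2,m_2,j})^\tr$.
\end{enumerate}

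Next we split the range of $r$ according to which case applies to each factor. The assumption $n_1\le n_2$ means the combination "$r\le n_1$ but $r>n_2$" cannot occur. The three surviving pieces are:
\begin{itemize}
\item $r\in R^1_{1,2}$: both factors in their "lower" form.
\item $r\in R^2_{1,2}$: the first factor upper, the second lower.
\item $r\in R^3_{1,2}$: both factors upper.
\end{itemize}
These give exactly the three sums on the left-hand side, and the symmetric computation gives the right-hand side. Intervals are understood as integer intervals intersected with $\N_0$.

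The main obstacle is bookkeeping rather than ideas. One has to check carefully that the index conventions of \Cref{cor:relation_on} produce exactly the stated subscripts, for instance $M^{n_1-r}_{m_1+r,m_1,i}$ versus $M^{r-n_1}_{n_1+m_1,m_1,i}$. One must also handle the diagonal term $k=0$, placed in the lower case, consistently so that there is no double counting at $r=n_1$ or $r=n_2$. A further point is that the sum in $R^1$ on the left is written with the label $R^1_{1,1}$, which I would read as $R^1_{1,2}$; I would verify the boundary endpoints by checking the smallest case $m_1=m_2=1$ against the classical three-term commutativity condition of \cite{dunkl_orthogonal_2014}.
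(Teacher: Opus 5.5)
Your proof is correct and is essentially the paper's argument: the paper expands both sides of $(\sbas_{m_1,i}\tbp_{n_1}, \sbas_{m_2,j}\tbp_{n_2}^\tr) = (\sbas_{m_2,j}\tbp_{n_1}, \sbas_{m_1,i}\tbp_{n_2}^\tr)$ using \Cref{cor:relation_on} and orthonormality, which is exactly your resolution-of-identity computation of $(\tbp_{n_1}, \sbas_{m_1,i}\sbas_{m_2,j}\tbp_{n_2}^\tr)$. Your reading of $R^1_{1,1}$ as $R^1_{1,2}$ on the left-hand side is also right. For example, with $n_1=n_2-1$, $m_1=1$, $m_2=2$, the literal $R^1_{1,1}$ would drop the generally nonzero term at $r=n_2-2$.
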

\begin{proof}
    We obtain these relations by applying the relations in~\Cref{cor:relation_on} to the equality
    \begin{align}
        (w_{m_1, i} \tbp_{n_1}, w_{m_2, j} \tbp_{n_2}) = (w_{m_2, j} \tbp_{n_1}, w_{m_1, i} \tbp_{n_2}).
    \end{align}
\end{proof}


\begin{lemma} \label{lem:cyclic_vector}
    The basis vector $\Phi_0 = \phi_{\bzero} \in H(W^\gr)$ is a cyclic vector for $\{J_{m,i}\}$.
\end{lemma}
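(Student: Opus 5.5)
The strategy is to relate the spectral measure to polynomial expressions in the $J_{m,i}$ applied to $\Phi_0$, and then use the structure of the Jacobi matrices to see that these expressions span a dense subspace. The first step is to identify the action of the Jacobi matrices on $H(W^\gr)$ with multiplication on polynomials. Consider the unitary identification $\tbp_\balpha \leftrightarrow \phi_\balpha$, sending the orthonormal polynomials to the basis vectors; by construction $\Phi_n$ corresponds to $\tbp_n$. By \Cref{cor:relation_on} and the definition \eqref{eq:jacobi_matrix}, $J_{m,i}$ acts on the finitely supported vectors exactly as multiplication by $\sbas_{m,i}$ acts on $\Sym(W^\gr)$. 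Hence, for every polynomial $P$, if $P = \sum_n \ba_n^\tr \tbp_n$, then $P(J)\Phi_0 = \sum_n \ba_n^\tr \Phi_n$, where $P(J)$ means $P$ with each $\sbas_{m,i}$ replaced by $J_{m,i}$. This is well defined because the $J_{m,i}$ commute on finitely supported vectors, which is the content of \Cref{prop:commutativity_conditions}. Since $\{\tbp_n\}$ is a basis of $\Sym(W^\gr)$, the vectors $\{P(J)\Phi_0\}$ are exactly the finitely supported vectors, which form a dense subspace.

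The second step is to pass from polynomials in the $J$'s to spectral projections, as required by \eqref{eq:cyclicvec_general}. Each $J_{m,i}$ (assumed self-adjoint and strongly commuting, as needed for \Cref{thm:spectral1}) can be written as $J_{m,i} = \int \lambda_{m,i}\, dE$. Let $\mathcal K$ be the closed span of $\{E(B)\Phi_0\}$. This space is invariant under all $E(B)$, so it is a reducing subspace for each $J_{m,i}$. I will show by induction that $P(J)\Phi_0 \in \mathcal K$. Given $\psi = P(J)\Phi_0 \in \mathcal K \cap \mathrm{Dom}(J_{m,i})$, the vector $J_{m,i}\psi = \lim_{N\to\infty} \int_{|\lambda_{m,i}|\le N} \lambda_{m,i}\, dE\,\psi$ is a limit of vectors in $\mathcal K$. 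Each of these vectors lies in $\mathcal K$ because bounded Borel functions of the spectral measure preserve $\mathcal K$: such functions are norm limits of simple functions $\sum c_k E(B_k)$. Thus $\mathcal K$ contains all finitely supported vectors, and therefore $\mathcal K = H(W^\gr)$.

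I expect the main obstacle to be domain issues. The $J_{m,i}$ are unbounded in general, so I must ensure that $\Phi_0$ and all finitely supported vectors lie in the domains of the relevant products. I also need the operators used to be the self-adjoint extensions whose spectral measures enter the argument. I would handle this by taking $J_{m,i}$ to be the closure of its restriction to finitely supported vectors, which is assumed essentially self-adjoint as in the setting of \Cref{thm:spectral}. Because each $J_{m,i}$ is band-limited, it maps finitely supported vectors to finitely supported vectors, so iterated applications remain in the domain and the induction goes through.
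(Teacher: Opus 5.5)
Your proof is correct and has the same structure as the paper's: the closed span of $\{E(B)\Phi_0\}$ is invariant under each $J_{m,i}$ and so contains every $P(J)\Phi_0$, and these vectors exhaust the finitely supported vectors, which are dense. The differences are in two sub-steps. You obtain $\Phi_n = P_n(J)\Phi_0$ directly from the fact that, under $\tbp_n \leftrightarrow \Phi_n$, $J_{m,i}$ acts as multiplication by $\sbas_{m,i}$ and $\tbp$ is a basis, whereas the paper inverts the recurrence with the generalized left inverse $D_n^\tr$ as in \Cref{prop:gen_inverse_relation}. Your spectral-truncation argument also justifies $J_{m,i}$-invariance for unbounded operators, a domain point the paper only asserts.
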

\begin{proof}
    Let $H_E = \overline{\text{span}\{E(B) \Phi_0 \, : \, B \in \cB(\hW)\}} \subset H(V)$. Then by the spectral description of $J_{m,i}$ in~\Cref{thm:spectral1}, we have $E(B) J_{m,i} = J_{m,i} E(B)$ for any $B \in B \in \cB(\hW)$, so that $J_{m,i} H_E \subset H_E$. Next, by definition of the Jacobi matrices, we have
    \begin{align}
        J_{m,i} \Phi_{n-m} = \sum_{k=0}^{m} M^k_{n,m,i} \Phi_{n-m+k} + \sum_{k=1}^{m} (M^{   k}_{n-k,m,i})^\tr \Phi_{n-m-k},
    \end{align}
    and thus, by the same arguments as~\Cref{prop:gen_inverse_relation}, we have
    \[
        \Phi_{n} = \sum_{m=1}^{n} \sum_{i=1}^{N_m} \left(J_{m,i} D_{n,m,i}^\tr \Phi_{n-m}\right) + \sum_{k= 0}^{n-1} E_{n}^k \Phi_{k}.
    \]
    Thus, this provides a polynomial in $P_n$ in the $J_{m,i}$ such that $\Phi_n = P_n(J_{m,i})\Phi_0$. Then, since the $\Phi_n$ are dense in $H(W^\gr)$, we have $H_E = H(W^\gr)$. 
\end{proof}

\subsubsection{Bounded case}
Next, we move on to showing that the Jacobi matrices $J_{m,i}$ are a commuting self-adjoint family of operators on $H(W^\gr)$. The arguments depend on whether we assume that $J_{m,i}$ is bounded, and we begin with the bounded setting. These results generalize the classical setting, and while we detail aspects of the proof which differ in our setting, we will defer parts of the proof to existing references in situations where the proof is the same. 

\begin{lemma} \label{lem:bounded_J}
    The operator $J_{m,i}$ is bounded if and only if $
    \sup_{n \geq 0} \|M^k_{n,m,i}\|_2 < \infty$ for all $k \in [-m, m]$.
\end{lemma}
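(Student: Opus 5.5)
Since $J_{m,i}$ is a band block matrix of bandwidth $2m+1$ (in block units, indexed by total degree), the plan is to compare its operator norm with the norms of its blocks. Write $J_{m,i} = \sum_{k=-m}^{m} J^{(k)}_{m,i}$. Here $J^{(k)}_{m,i}$ is the block-shift operator that keeps only the $k$-th block diagonal. Concretely, it maps the $n$-th block of coordinates $\Phi_n$ to the $(n+k)$-th block via $M^k_{n+m,m,i}$ for $k\ge 0$, and via the transposed blocks $(M^{-k}_{\cdot,m,i})^\tr$ for $k<0$, as in \eqref{eq:jacobi_matrix}. Each $J^{(k)}_{m,i}$ acts on $H(W^\gr)=\bigoplus_n \mathrm{span}(\Phi_n)$, an orthogonal direct sum of finite-dimensional blocks.

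\textbf{Sufficiency.} I first show that a single block-diagonal shift is bounded with norm equal to the supremum of its block norms. Let $x=(x_n)_n$ with $x_n\in\R^{r_n}$. Because the output blocks for distinct $n$ land in distinct, mutually orthogonal blocks, we get
\[
\|J^{(k)}_{m,i}x\|^2=\sum_n \|B_n x_n\|^2\le \sup_n\|B_n\|_2^2\sum_n\|x_n\|^2 ,
\]
where $B_n$ denotes the relevant block. Transposition does not change spectral norms, so the blocks below the diagonal are controlled by the same suprema as those above. The triangle inequality over the $2m+1$ diagonals then gives $\|J_{m,i}\|\le\sum_{k=-m}^m\sup_n\|M^{k}_{n,m,i}\|_2<\infty$. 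Here $M^{-k}$ means the transposed blocks, consistent with \eqref{eq:M_identity} in the orthonormal case $H_n=I$.

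\textbf{Necessity.} Assume $J_{m,i}$ is bounded. Every block entry of a bounded operator is a compression $P_{a}J_{m,i}P_{b}$, where $P_a$ is the orthogonal projection onto block $a$. Such a compression has norm at most $\|J_{m,i}\|$, so $\|M^k_{n,m,i}\|_2\le\|J_{m,i}\|$ for all $n$ and $k$, which is the required uniform bound.

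\textbf{Main obstacle.} The only delicate point is bookkeeping: one must check that the indexing of the blocks in \eqref{eq:jacobi_matrix} matches the recurrence of \Cref{cor:relation_on}. In particular, the lower diagonals must be the transposes of the upper ones, and each diagonal must be genuinely a shift between orthogonal blocks. One must also check that $J_{m,i}$, initially defined on finitely supported sequences, extends by continuity. Once the diagonals are identified, both directions are the standard estimates above, with $2m+1$ diagonals in place of the classical three.
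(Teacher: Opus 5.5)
Your proof is correct and is essentially the argument the paper defers to (Dunkl--Xu, Lemma 3.4.3). Sufficiency comes from bounding the finitely many block diagonals: you use the triangle inequality over the $2m+1$ block shifts, where the classical proof bounds each block row by Cauchy--Schwarz, which is an immaterial difference. Necessity comes from each block being a compression of $J_{m,i}$, hence of norm at most $\|J_{m,i}\|$, and the negative $k$ are handled by the orthonormal case of \eqref{eq:M_identity}, $M^{-k}_{n+k,m,i} = (M^k_{n,m,i})^\tr$, exactly as you note.
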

\begin{proof}
    The proof is analogous to ~\cite[Lemma 3.4.3]{dunkl_orthogonal_2014}.
\end{proof}

\begin{theorem} \label{thm:measure_compact}
    Let $\tbp = (\tbp_n)_{n=0}^\infty$ be a sequence in $\Sym(W^\gr)$ such that $\tbp_0 = 1$. The following are equivalent.
    \begin{enumerate}
        \item There exists a determinate measure $\mu \in \cM(\hW)$ with compact support such that $\tbp$ is orthonormal with respect to $\mu$.
        \item Statement (2) in~\Cref{thm:favard_positive} holds and in addition, $
    \sup_{n \geq 0} \|M^k_{n,m,i}\|_2 < \infty$ for all $k \in [-m, m]$.
    \end{enumerate}
\end{theorem}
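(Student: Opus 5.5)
The plan follows the classical argument for bounded Jacobi matrices (cf.\ \cite[Thm.~3.5.1]{dunkl_orthogonal_2014}), now carried out for the countable family $\{J_{m,i}\}$. Throughout, $\hW=\prod_m W^m$ is identified with $\R^\infty$ through the countable coordinate family $\{w_{m,i}\}$, with the product topology.

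\textbf{(1) $\Rightarrow$ (2).} Let $\cL(P)=\int P\,\mrd\mu$. Then $\cL$ is positive-definite and $\tbp$ is orthonormal. \Cref{thm:favard_positive} therefore gives the recurrence \eqref{eq:recurrence_on} and the rank conditions.

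Since $\mu$ has compact support $K$, each coordinate is bounded on it: $c_{m,i}:=\sup_{K}|w_{m,i}|<\infty$. By \eqref{eq:M_def} with $H=I$,
\[
M^k_{n,m,i}=\int w_{m,i}\,\tbp_{n-m}\tbp_{n-m+k}^\tr\,\mrd\mu .
\]
For unit vectors $a,b$, the polynomials $a^\tr\tbp_{n-m}$ and $b^\tr\tbp_{n-m+k}$ have unit $L^2(\mu)$ norm. Cauchy--Schwarz then gives $|a^\tr M^k_{n,m,i}b|\le c_{m,i}$. Hence $\sup_n\|M^k_{n,m,i}\|_2\le c_{m,i}$.

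\textbf{(2) $\Rightarrow$ (1).} \Cref{thm:favard_positive} provides a positive-definite $\cL$ for which $\tbp$ is orthonormal.

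\emph{Setting up the operators.} Let $V\colon H(W^\gr)\to\overline{\Sym(W^\gr)}^{\cL}$ be the unitary sending $\Phi_n\mapsto\tbp_n$. By \Cref{cor:relation_on} and the definition \eqref{eq:jacobi_matrix}, $VJ_{m,i}V^{-1}$ agrees with multiplication by $w_{m,i}$ on polynomials. By \Cref{lem:bounded_J} each $J_{m,i}$ is bounded, and since it is a symmetric band matrix it is self-adjoint.

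\emph{Commutativity.} Multiplication operators commute on polynomials, so $J_{m,i}J_{m',j}=J_{m',j}J_{m,i}$ on the dense span of the $\Phi_n$. This is exactly the content of \Cref{prop:commutativity_conditions}. Boundedness extends the commutation to all of $H(W^\gr)$. Bounded commuting self-adjoint operators strongly commute, because their spectral projections are norm limits of polynomials in them. Hence $\{J_{m,i}\}$ is a countable family of CSOs.

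\emph{The measure and its moments.} \Cref{lem:cyclic_vector} shows that $\Phi_0$ is cyclic, so \Cref{thm:spectral} applies. Concretely, set $\mu(B)=\langle E(B)\Phi_0,\Phi_0\rangle$, a probability measure on $\R^\infty\cong\hW$. Then
\[
\int \sbasvec^\balpha\,\mrd\mu=\langle J^\balpha\Phi_0,\Phi_0\rangle=\cL(\sbasvec^\balpha),
\]
where the last equality uses the identification above, $V\Phi_0=1$ and $\cL(1)=1$. So $\mu$ represents $\cL$, and $\tbp$ is orthonormal in $L^2(\mu)$.

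\emph{Compact support.} The marginal of $\mu$ on coordinate $(m,i)$ is supported in $\sigma(J_{m,i})\subset[-\|J_{m,i}\|,\|J_{m,i}\|]$. Therefore $\mu$ is supported in $\prod_{m,i}[-\|J_{m,i}\|,\|J_{m,i}\|]$, which is compact by Tychonoff.

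\emph{Determinacy.} Suppose $\nu$ has the same moments as $\mu$, and write $c=\|J_{m,i}\|$. Then $\int w_{m,i}^{2k}\,\mrd\nu\le c^{2k}$ for all $k$. Taking $2k$-th roots and letting $k\to\infty$ gives $\|w_{m,i}\|_{L^\infty(\nu)}\le c$, so $\nu$ is supported on the same compact product. Polynomials in finitely many coordinates are dense in $C$ of that set by Stone--Weierstrass, so $\nu=\mu$.

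\textbf{Main obstacle.} The delicate point is the identification of $J_{m,i}$ with multiplication by $w_{m,i}$, including the exact index bookkeeping in \eqref{eq:jacobi_matrix} versus \eqref{eq:recurrence_on}. It is this identification that makes commutativity and the moment identity $\langle J^\balpha\Phi_0,\Phi_0\rangle=\cL(\sbasvec^\balpha)$ hold. I would verify it first on each $\Phi_n$ by direct comparison with \Cref{cor:relation_on}, and then extend by linearity and continuity.
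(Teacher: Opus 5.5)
Your proposal is correct and follows the paper's route: Favard's theorem, boundedness of the $J_{m,i}$ via \Cref{lem:bounded_J}, commutativity, the cyclic vector of \Cref{lem:cyclic_vector}, the spectral theorem \Cref{thm:spectral}, and Tychonoff for compact support. The only differences are that you make the bound in (1)$\Rightarrow$(2) explicit via Cauchy--Schwarz, and you replace the paper's appeal to determinacy of finite-dimensional projections plus \cite[Corollary 5.3]{alpay_moment_2015} with a valid self-contained moment-growth and Stone--Weierstrass argument. One small correction: spectral projections of a bounded self-adjoint operator lie in the strong-operator closure of the polynomials in it, not in general in the norm closure; this still gives strong commutativity, since commuting with a fixed bounded operator is preserved under strong-operator limits.
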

\begin{proof}
    First, if $\mu$ has compact support, then the the multiplication operators $T_{w_{m,i}} : L^2(\hW, \mu) \to L^2(\hW, \mu)$ are bounded. Because these multiplication operators are represented by the Jacobi matrices $J_{m,i}$ in the $\tbp_n$ basis, the result follows from~\Cref{lem:bounded_J}.

    Next, suppose the second statement holds, so by~\Cref{thm:favard_positive}, $\tbp$ are orthonormal with respect to a positive-definite linear functional $\cL$. We define the Jacobi matrices as in~\eqref{eq:jacobi_matrix}, which are bounded by~\Cref{lem:bounded_J}. In the bounded setting, strong commutativity is equivalent to commutativity of operators. We can directly verify that the conditions in~\Cref{prop:commutativity_conditions} are equivalent to $J_{m_1, i}J_{m_2, j} = J_{m_2, j}J_{m_1, i}$; thus the $J_{m,i}$ are strongly commutative. Next, it is clear by definition of the $J_{m,i}$ that they are symmetric, and since they are bounded, they must be self-adjoint. Because there exists a cyclic vector (\Cref{lem:cyclic_vector}), we apply~\Cref{thm:spectral} to obtain a measure $L^2(\hW, \mu)$ such that $J_{m,i}$ are unitarily equivalent to multiplication operators $T_{m,i}$, so that
    \begin{align*}
        \int_{\hW} \tbp_n(w) \tbp_m^\tr(w) d\mu(w) = \langle \tbp_n(T) 1, \tbp_m^\tr(T)1 \rangle_{L^2(\hW, \mu)} = \langle \tbp_n(J) \Phi_0, \tbp_m^\tr(J) \Phi_0 \rangle_{H(W^\gr)} = \langle \Phi_n, \Phi_m^\tr \rangle_{H(W^\gr)}.
    \end{align*}
    Thus, $\tbp$ is orthonormal with respect to $\mu$. The support $S_{m,i}$ of the spectral measure $E_{m,i}$ of $J_{m,i}$ is compact since $J_{m,i}$ is bounded. Thus, the support of $S = \prod_{m,i} S_{m,i}$ is compact by Tychonoff's theorem. 
    Finally, we note that the projection of $\mu$ to any finite dimensional subspace is determinate (since it is compact), and therefore, $\mu$ is determinate by~\cite[Corollary 5.3]{alpay_moment_2015}.
\end{proof}

\subsubsection{Unbounded case}
Next, we consider the case where the Jacobi matrices $J_{m,i}$ are unbounded operators $J_{m,i} : \cD(J_{m,i}) \to H(W^\gr)$, where the domain $\cD(J_{m,i}) \subset H(W^\gr)$ consists of all sequences $f \in H(W^\gr)$ such that $J_{m,i}f \in H(W^\gr)$.

\begin{lemma} \label{lem:unbounded_self_adjoint}
    Suppose
    \begin{align} \label{eq:unbounded_condition}
        \sum_{n=0}^\infty \frac{1}{\fM_{n,m,i}} = \infty \quad \text{where} \quad \fM_{n,m,i} = \sum_{k=1}^m \sum_{r=n+1}^{n+k} \|M_{m+r-k,m,i}^k\|_2.
    \end{align}
    Then, $J_{m,i}$ is self-adjoint. 
\end{lemma}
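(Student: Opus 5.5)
This is the graded analogue of the classical Carleman-type criterion for Jacobi matrices (cf.\ \cite[Theorem 3.4.5]{dunkl_orthogonal_2014} and the one-variable argument of Berezanskii). The plan is to show that $J_{m,i}$, defined on the dense domain of finitely supported sequences (polynomials in the $\Phi_n$), is essentially self-adjoint. Because $J_{m,i}$ is real and symmetric, it suffices to show that the deficiency spaces are trivial. Concretely, we show there is no nonzero $f \in H(W^\gr)$ with $J_{m,i}^* f = \pm \mathrm{i} f$; by reality of the matrix, the case of $+\mathrm{i}$ suffices. Here $J_{m,i}^*$ acts by the formal band matrix \eqref{eq:jacobi_matrix} on its maximal domain. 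We then identify the closure with the operator on $\cD(J_{m,i})$ described in the paper.

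\textbf{Step 1: Green's identity.} Write $f = (f_n)_n$ with $f_n$ the block of coefficients on $\Phi_n$, and let $P_N$ be the projection onto $\bigoplus_{n\le N}$ degree blocks. For $f,g$ in the maximal domain, compute
\[
\langle J f, P_N g\rangle - \langle P_N f, J g\rangle .
\]
Because $J_{m,i}$ has bandwidth $m$ in block-degree, with blocks $M^k$ and $(M^k)^\tr$ placed symmetrically, all interior terms cancel. What survives is a boundary form $B_N(f,g)$ involving only blocks $f_r, g_s$ with $N-m < r \le N < s \le N+m$. Each term is paired through a single off-diagonal block $M^k_{\cdot,m,i}$ with $1\le k\le m$ and $r+k = s$. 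Hence
\[
|B_N(f,g)| \le \sum_{k=1}^m \sum_{r=N+1}^{N+k} \|M^k_{m+r-k,m,i}\|_2 \,\bigl(\textstyle\sum_{|j-N|\le m}\|f_j\|\bigr)\bigl(\sum_{|j-N|\le m}\|g_j\|\bigr),
\]
that is, $|B_N| \le \fM_{N,m,i}\, a_N b_N$ with $a_N,b_N$ local $\ell^2$-type sums over a window of width $2m$. The main bookkeeping is matching indices so that exactly the quantity $\fM_{N,m,i}$ of \eqref{eq:unbounded_condition} appears. I expect this to be the main technical obstacle, though it is routine.

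\textbf{Step 2: Carleman argument.} Suppose $J^* f = \mathrm{i} f$ and take $g = f$. Then
\[
\langle J^*f,P_Nf\rangle - \langle P_Nf,J^*f\rangle = 2\mathrm{i}\|P_Nf\|^2 ,
\]
so $\|P_N f\|^2 \le \tfrac12 \fM_{N,m,i}\, a_N^2$. If $f\neq 0$, then $\|P_Nf\|^2 \ge c>0$ for $N\ge N_0$, giving $1/\fM_{N,m,i} \le C a_N^2$. Summing over $N$, each block $\|f_j\|^2$ appears in at most $2m+1$ windows (after Cauchy--Schwarz on the window sums). Therefore
\[
\sum_N 1/\fM_{N,m,i} \le C(2m+1)^2\|f\|^2 < \infty ,
\]
which contradicts \eqref{eq:unbounded_condition}. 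Hence the deficiency indices vanish, and the closure of $J_{m,i}$ is self-adjoint.

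\textbf{Step 3: Domain identification.} By the same boundary-form estimate along a subsequence where $\fM_{N,m,i} a_N b_N \to 0$ (such a subsequence exists by the divergence hypothesis), the maximal operator on $\cD(J_{m,i})$ is symmetric. It therefore coincides with $J^*$ and with the closure, so it is self-adjoint.
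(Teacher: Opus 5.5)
Your proof is correct, and its analytic core is the same as the paper's. The truncated difference $\langle Jf,P_Ng\rangle-\langle P_Nf,Jg\rangle$ collapses to a boundary form over the blocks straddling degree $N$. That form is bounded by $\fM_{N,m,i}$ times local window norms, and divergence of $\sum_N 1/\fM_{N,m,i}$ is played off against square-summability of those window norms.

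Where you differ is in how you conclude. Write $J_{\mathrm{min}}$ for $J_{m,i}$ on finitely supported sequences and $J_{\mathrm{max}}$ for $J_{m,i}$ on $\cD(J_{m,i})$.
\begin{itemize}
\item You apply the estimate with $g=f$ a putative deficiency vector ($J^*f=\pm\mathrm{i}f$), and by von Neumann's criterion conclude that $J_{\mathrm{min}}$ is essentially self-adjoint.
\item The paper applies the estimate to arbitrary $f,g\in\cD(J_{m,i})$ to show $J_{\mathrm{max}}$ is symmetric. It argues by contradiction on $\delta=|\langle Jf,g\rangle-\langle Jg,f\rangle|$, which is essentially your Step 3. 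It then defers the passage to self-adjointness to \cite[Lemma 1]{xu_unbounded_1993}.
\end{itemize}

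Your Steps 2 and 3 are each sufficient on their own. Because of the band structure, $J_{\mathrm{max}}=J_{\mathrm{min}}^*$, hence $J_{\mathrm{max}}^*=\overline{J_{\mathrm{min}}}\subset J_{\mathrm{max}}$. So symmetry of $J_{\mathrm{max}}$ and essential self-adjointness of $J_{\mathrm{min}}$ are equivalent, and this one-line observation is all the citation to Xu supplies.

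Your route is self-contained. It also directly gives essential self-adjointness on the polynomial domain $\Sym(W^\gr)$, which is the form used in the subsequent Nelson-type commutation lemma. The paper's route is shorter on the page but outsources that last step.

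One small point: if $\fM_{N,m,i}=0$ for some large $N$, your inequality $\|P_Nf\|^2\le\tfrac12\fM_{N,m,i}a_N^2$ already gives the contradiction, so no division by $\fM_{N,m,i}$ is needed.
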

\begin{proof}
    Let $f,g \in \cD(J_{m,i})$ such that $f = \sum \ba_k^\tr \Phi_k$ and $g = \sum \bb_k^\tr \Phi_k$. We define $\ba_k = \bb_k = 0$ for $k < 0$. We will prove that $J_{m,i}$ is symmetric so that $\langle J_{m,i} f, g\rangle = \langle J_{m,i} g, f \rangle$. By definition of the $J_{m,i}$, we have
    \begin{align}
        \langle J_{m,i}f, g \rangle = \sum_{r=0}^\infty \left( \sum_{k=1}^m \ba_{r-k}^\tr M^k_{m+r-k,m,i} \bb_r + \ba_r^\tr M^0_{m+r,m,i} \bb_r + \sum_{k=1}^m \ba_{r+k}^\tr (M^k_{m+r, m,i})^\tr \bb_r \right).
    \end{align}
    For $n \in \N$, we define the truncation by
    \begin{align} \label{eq:unbounded_truncation}
        S_n(\langle J_{m,i}f, g \rangle) = \sum_{r=0}^n\left( \sum_{k=1}^m \ba_{r-k}^\tr M^k_{m+r-k,m,i} \bb_r + \ba_r^\tr M^0_{m+r,m,i} \bb_r + \sum_{k=1}^m \ba_{r+k}^\tr (M^k_{m+r, m,i})^\tr \bb_r \right),
    \end{align}
    and note that $\langle J_{m,i}f, g \rangle = \lim_{n \to \infty} S_n(\langle J_{m,i}f, g \rangle)$. Then, by direct computation using that $M_{m+r-k,m,i}^k$ is symmetric, we have
    \begin{align}
        S_n(\langle J_{m,i}f, g \rangle) - S_n(\langle J_{m,i} g, f \rangle) = \sum_{k=1}^m \sum_{r=n+1}^{n+k} -\ba_{r -k}^\tr M_{m+r-k,m,i}^k \bb_{r} + \bb_{r -k}^\tr M_{m+r-k,m,i}^k \ba_{r}.
    \end{align}
    Then, we can bound this by
    \begin{align*}
        |S_n(\langle J_{m,i}f, g \rangle) - S_n(\langle J_{m,i} g, f \rangle)| &\leq \sum_{k=1}^m \sum_{r=n+1}^{n+k} \|M_{m+r-k,m,i}^k\|_2 (\|\ba_{r -k}\|_2^2 + \|\bb_{r}\|_2^2 + \|\bb_{r -k}\|_2^2 + \|\ba_{r}\|_2^2)/2 \\
        & \leq\sum_{k=1}^m \sum_{r=n+1}^{n+k} \|M_{m+r-k,m,i}^k\|_2 \left( \sum_{s=n-m+1}^{n+m} \|\ba_s\|_2^2 + \|\bb_s\|_2^2\right).
    \end{align*}
    Then, suppose $|\langle J_{m,i}f, g \rangle - \langle J_{m,i} g, f \rangle| = \delta$.
    Then, for sufficiently large $N$, we have $|S_n(\langle J_{m,i}f, g \rangle) - S_n(\langle J_{m,i} g, f \rangle)| > \delta/2$ for all $n \geq N$, and therefore
    \begin{align}
        \frac{\delta}{2}\sum_{n \geq N} \frac{1}{\fM_{n,m,i}} \leq \sum_{n\geq N} \sum_{s=n-m+1}^{n+m} \|\ba_s\|_2^2 + \|\bb_s\|_2^2
    \leq 2m (\|\ba\|^2 + \|\bb\|^2)  \leq  2m (\|f\|^2 + \|g\|^2) < \infty,
    \end{align}
    so by the hypothesis, $\delta = 0$ and $J_{m,i}$ is symmetric. Then, the fact that $J_{m,i}$ is self-adjoint can be shown in the same way as~\cite[Lemma 1]{xu_unbounded_1993}.
\end{proof}

\begin{lemma} \label{lem:unbounded_commute}
    Suppose~\eqref{eq:unbounded_condition} holds. Then the operators $J_{m,i}$ pairwise strongly commute.
\end{lemma}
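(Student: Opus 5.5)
The plan follows the classical route for Jacobi matrices in several variables (as in \cite{xu_unbounded_1993} and \cite[Section 3.4]{dunkl_orthogonal_2014}): we show strong commutativity via commutation of resolvents on a dense invariant core.

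\textbf{Step 1: the algebraic commutation on the core.} Let $\cD_0 \subset H(W^\gr)$ be the span of the $\Phi_n$, i.e.\ the finitely supported sequences. Each $J_{m,i}$ maps $\cD_0$ into itself, because it is band-diagonal of width $2m+1$. By \Cref{prop:commutativity_conditions}, the block identities \eqref{eq:commutativity_conditions} say exactly that
\[
J_{m_1,i}J_{m_2,j}\Phi_n = J_{m_2,j}J_{m_1,i}\Phi_n
\]
for all $n$. Indeed, both sides are computed from \Cref{cor:relation_on} applied to $(w_{m_1,i}\tbp_{n_1}, w_{m_2,j}\tbp_{n_2})$, by the same bookkeeping as in the bounded case of \Cref{thm:measure_compact}. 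Hence the operators commute on $\cD_0$, and $\cD_0$ is invariant under all of them.

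\textbf{Step 2: essential self-adjointness on $\cD_0$.} \Cref{lem:unbounded_self_adjoint} gives that $J_{m,i}$, on its maximal domain, is self-adjoint under \eqref{eq:unbounded_condition}. I would check that the argument there in fact shows the closure of $J_{m,i}|_{\cD_0}$ equals this maximal operator. The point is that the maximal operator is the adjoint of $J_{m,i}|_{\cD_0}$, because the matrix is symmetric. The symmetry computation in that lemma shows this adjoint is symmetric, hence $J_{m,i}|_{\cD_0}$ is essentially self-adjoint.

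\textbf{Step 3: from commuting on a core to strong commutativity.} This is the main obstacle, since Nelson's counterexample shows that commuting on a common invariant core does not suffice in general. I would use the criterion that two self-adjoint operators $A,B$ strongly commute if $A,B$ commute on a common invariant dense domain $\cD_0$ on which $A^2+B^2$ (equivalently the sum of squares) is essentially self-adjoint. An alternative is Nussbaum's criterion: strong commutativity follows if $\cD_0$ consists of vectors that are quasi-analytic for both operators.

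To implement the quasi-analytic route, I would bound $\|J_{m,i}^k\Phi_n\|$ by products of the quantities $\fM_{r,m,i}$ over a window of indices $r$, using the band structure. Then the divergence condition \eqref{eq:unbounded_condition} would yield the Carleman-type condition $\sum_k \|J^k_{m,i}\Phi_n\|^{-1/k}=\infty$, via the standard Carleman/Denjoy argument used in \cite{xu_unbounded_1993}. This would make each $\Phi_n$ a quasi-analytic vector for every $J_{m,i}$.

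Nussbaum's theorem, or the resolvent argument of \cite[Lemma 2]{xu_unbounded_1993}, then gives that the resolvents $(J_{m_1,i}-z)^{-1}$ and $(J_{m_2,j}-w)^{-1}$ commute. This is equivalent to the spectral measures commuting, which concludes the proof.
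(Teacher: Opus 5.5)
Your Steps 1 and 2 are sound. Step 1 is the same ingredient the paper uses: commutation on $\cD$ via \Cref{prop:commutativity_conditions}. The genuine gap is in Step 3, where you claim that divergence of $\sum_n 1/\fM_{n,m,i}$ forces $\sum_k \|J_{m,i}^k\Phi_n\|^{-1/k}=\infty$. That implication is false, for two reasons.

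\emph{The bound itself fails.} $\fM_{n,m,i}$ contains only the off-diagonal blocks. The diagonal blocks $M^0_{n,m,i}$ are unconstrained, so $\|J^k_{m,i}\Phi_n\|$ cannot be bounded by products of the $\fM$'s alone.

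\emph{Even with such a bound, the conclusion does not follow.} A product bound only controls geometric means $(\prod_{r<k}c_r)^{1/k}$. However, $\sum_r 1/c_r=\infty$ does not imply $\sum_k(\prod_{r<k}c_r)^{-1/k}=\infty$: take $c_r=1$ when $r$ is a power of $2$ and $c_r=e^{r^2}$ otherwise. This is not an artefact of the estimate. Take $W=W^1=\R$. Then $J_{1,1}$ is a classical Jacobi matrix with off-diagonal entries $a_n=M^1_{n+1,1,1}$, and \eqref{eq:unbounded_condition} reads exactly $\sum_n 1/a_n=\infty$. With $a_n=c_n$ as above and zero diagonal, the hypothesis holds. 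Yet $\|J_{1,1}^k\Phi_0\|\ge a_0a_1\cdots a_{k-1}\ge e^{ck^3}$, so $\Phi_0$ is not quasi-analytic; in fact no nonzero finitely supported vector is. In other words, \eqref{eq:unbounded_condition} is a Wronskian/limit-point type condition, the analogue of $\sum 1/a_n=\infty$. It is strictly weaker than Carleman's condition $\sum_k s_{2k}^{-1/(2k)}=\infty$, where $s_{2k}=\|J^k\Phi_0\|^2$, and that condition is precisely quasi-analyticity of $\Phi_0$. So Nussbaum's theorem cannot be fed from this hypothesis without extra assumptions, such as control of the diagonal blocks together with monotonicity of $\fM_{n,m,i}$ in $n$, which would make a geometric-mean argument work.

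The paper does not use quasi-analytic vectors. It takes the other criterion you mention: commutation on $\cD$ plus Nelson's lemma, applied as in Xu's Lemma 3. If you follow that route, you must prove the hypothesis of the criterion in the form you quote it, namely essential self-adjointness of $J_{m_1,i}^2+J_{m_2,j}^2$ on the finitely supported vectors. This does not come for free from \Cref{lem:unbounded_self_adjoint} applied to each operator separately. The squared operator is a wider band matrix whose extreme off-diagonal blocks are products of consecutive $M^k$'s, and \eqref{eq:unbounded_condition} gives no direct control over those in the Wronskian-type estimate. Your proposal leaves this step unaddressed, so as written neither branch of Step 3 is completed.
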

\begin{proof}
    The proof is analogous to~\cite[Lemma 3]{xu_unbounded_1993} by applying~\cite[Lemma 9.2]{nelson_analytic_1959} to the matrices $J_{m,i}$. In particular, we know from~\Cref{prop:commutativity_conditions} that the $J_{m,i}$ commute on the dense subspace $\cD = \Sym(W^\gr) \subset H(W^\gr)$.
\end{proof}

\begin{theorem} \label{thm:measure_unbounded}
    Suppose $\tbp = (\tbp_n)_{n=0}^\infty$ is a sequence in $\Sym(W^\gr)$ such that $\tbp_0 = 1$, statement (2) in~\Cref{thm:favard_positive} holds, and~\eqref{eq:unbounded_condition} holds. Then, there exists a determinate measure $\mu \in \cM(\hW)$ such that $\tbp$ is orthonormal with respect to $\mu$. 
\end{theorem}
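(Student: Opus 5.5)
The argument follows the proof of \Cref{thm:measure_compact}, replacing boundedness of the Jacobi matrices by the self-adjointness and strong-commutativity lemmas for the unbounded case. First, since statement (2) of \Cref{thm:favard_positive} holds, that theorem gives a positive-definite functional $\cL$ on $\Sym(W^\gr)$ with $\tbp$ orthonormal. The recurrence coefficients of \Cref{cor:relation_on} then define the Jacobi matrices $J_{m,i}$ in \eqref{eq:jacobi_matrix}, acting on $H(W^\gr)$ with the finite sequences $\cD=\Sym(W^\gr)$ (via $\tbp_n\leftrightarrow\Phi_n$) in their domains.

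Under \eqref{eq:unbounded_condition}, \Cref{lem:unbounded_self_adjoint} makes each $J_{m,i}$ self-adjoint, and \Cref{lem:unbounded_commute} makes them pairwise strongly commute. \Cref{thm:spectral1} then yields a joint spectral measure $E$ on $\R^\infty$, which we identify with $\hW$ by enumerating the countably many indices $(m,i)$ and using the product topology. By \Cref{lem:cyclic_vector}, $\Phi_0$ is cyclic. \Cref{thm:spectral} therefore provides a probability measure $\mu$ on $\hW$ and a unitary $U\colon H(W^\gr)\to L^2(\hW,\mu)$ with $J_{m,i}=U^{-1}\lambda_{m,i}U$.

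To transfer orthonormality, we need $\tbp_n(J)\Phi_0=\Phi_n$. This follows from the recurrence, since $\Sym(W^\gr)$ is invariant under every $J_{m,i}$ and all products are therefore defined on the cyclic vector. We also need $U\Phi_0=1$; in the construction of \Cref{thm:spectral}, $\mu(B)=\langle E(B)\Phi_0,\Phi_0\rangle$ and $U$ sends $\Phi_0$ to the constant function $1$. Composing the multiplication operators,
\[
\int_{\hW}\tbp_n\tbp_k^\tr\,d\mu=\langle \tbp_n(\lambda)1,\tbp_k^\tr(\lambda)1\rangle=\langle \Phi_n,\Phi_k^\tr\rangle,
\]
which is $\delta_{nk}I$. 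A byproduct is that all polynomials lie in $L^2(\mu)$, i.e.\ $\mu$ has all moments.

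The remaining step, and the main obstacle, is determinacy. Unlike the compact case, we cannot simply project onto finite-dimensional subspaces and appeal to compactness. The plan is to use that the $J_{m,i}$ are self-adjoint, not merely symmetric, on the closure of $\cD$, so that each restriction $J_{m,i}|_{\cD}$ is essentially self-adjoint. This is what the argument of \cite[Lemma 1]{xu_unbounded_1993} actually establishes, and \eqref{eq:unbounded_condition} is a Carleman-type condition. If $\nu$ is another measure with the same moments, its multiplication operators restricted to polynomials are unitarily equivalent to $J_{m,i}|_{\cD}$. Essential self-adjointness and strong commutativity then force the spectral measures, and hence $\nu$ and $\mu$, to coincide on all finite-dimensional cylinder sets. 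Finally, \cite[Corollary 5.3]{alpay_moment_2015} upgrades determinacy of every finite-dimensional projection to determinacy of $\mu$ on $\hW$.

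The subtle point is that strong commutativity of the closures of the multiplication operators of $\nu$ also needs justification. I would obtain it from Nelson's criterion \cite[Lemma 9.2]{nelson_analytic_1959} applied to the Laplacian-type sum, as in \Cref{lem:unbounded_commute}, since the same matrices represent $\nu$'s operators.
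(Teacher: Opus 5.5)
Your existence argument is the paper's: it reruns the proof of \Cref{thm:measure_compact}, with \Cref{lem:unbounded_self_adjoint,lem:unbounded_commute} supplying self-adjointness and strong commutativity. Your checks $U\Phi_0=1$ and $\tbp_n(J)\Phi_0=\Phi_n$ make explicit what the paper leaves implicit.

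For determinacy your route differs, and it works. The paper passes to each finite coordinate subspace $U\subset W$ and re-orthonormalises $\Sym(U^\gr)$ to get $\tbp^U$ with new Jacobi operators $J^U_{m,i}$. It asserts that both lemmas still hold for these, which is not automatic since their recurrence matrices differ. It then gets determinacy of $\mu_U$ from \cite[Theorem 14.2]{schmudgen_moment_2017} and concludes via \cite[Corollary 5.3]{alpay_moment_2015}.

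You keep the original $J_{m,i}$. Since the maximal operator equals $(J_{m,i}|_\cD)^*$, \Cref{lem:unbounded_self_adjoint} already says $J_{m,i}|_\cD$ is essentially self-adjoint; you do not need to reopen Xu's argument. For a competitor $\nu$, the isometry $\Phi_n\mapsto\tbp_n$ carries $\overline{J_{m,i}|_\cD}$ to a self-adjoint operator $A$ in $K=\overline{\Sym(W^\gr)}^{L^2(\nu)}$, and $A$ is contained in the multiplication operator $M$ by $w_{m,i}$ on $L^2(\nu)$.

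The step your sketch should state explicitly is the resolvent argument. For $z\notin\R$ and $k\in K$ one has $(M-z)^{-1}k=(A-z)^{-1}k\in K$, so $K$ reduces $M$. The spectral projections of $M$, which are multiplications by indicators, then restrict on $K$ to the transported spectral projections $E_{m,i}$. Since $1\in K$, for every cylinder set $C=\bigcap_l\{w_{m_l,i_l}\in B_l\}$ this gives $\nu(C)=\langle\prod_l E_{m_l,i_l}(B_l)\Phi_0,\Phi_0\rangle=\mu(C)$, hence $\nu=\mu$.

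This has two consequences for your write-up:
\begin{itemize}
\item Strong commutativity on the $\nu$ side needs no Nelson argument. It is inherited by unitary equivalence from the $J_{m,i}$, or directly from the commuting indicator projections.
\item Alpay's corollary is superfluous. What you actually prove is not determinacy of $\mu_U$ as a moment problem on $U$, since a competitor on $U$ need not lift to $\hW$. Your last step should simply say that the cylinder marginals of any competitor on $\hW$ agree with those of $\mu$.
\end{itemize}

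On the trade-off: the paper's route, granted the transfer of \eqref{eq:unbounded_condition} to $\tbp^U$, gives the stronger fact that every finite-dimensional marginal is determinate. Yours uses only the stated hypothesis and avoids both that unverified transfer and the two external theorems.
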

\begin{proof}
    The proof is the same as~\Cref{thm:measure_compact} aside from determinacy, where~\Cref{lem:unbounded_self_adjoint,lem:unbounded_commute} are used to show that the $J_{m,i}$ are commuting self-adjoint operators. Thus, there exists a measure $\mu \in \cM(\hW)$ such that $\tbp$ is orthonormal with respect to $\mu$.
    
    In order to prove determinacy, consider a finite dimensional subspace $U \subset W$ which is defined by a finite subset of the basis vectors $\{w_{m_k, i_k}\}_{k=0}^K$ of $W$. Let $\mu_U \in \cM(U)$ denote the pushforward of $\mu$ under the projection to $U$. The induced inner product defined on $\Sym(U^\gr)$
    is simply the restriction of the inner product in $\Sym(W^\gr)$. Let $\tbp^U$ be the collection of orthonormal
    polynomials in $\tbp$ which are valued in $U$. Furthermore, both~\Cref{lem:unbounded_self_adjoint,lem:unbounded_commute} hold for $\tbp^U$. Then, by the same arguments as above, we have a measure $\mu'_U \in \cM(U)$ such that $\tbp^U$ is orthonormal with respect to $\mu'_U$. In particular, all multiplication operators corresponding\footnote{Note that the matrices $M^k_{n,m,i}$ in the recurrence relation for $\tbp^U$ are different in general. We let $J^U_{m_k, i_k}$ denote the Jacobi operators for $\tbp^U$.} to $\{J^U_{m_k, i_k}\}_{k=0}^K$ are self-adjoint, so by~\cite[Theorem 14.2]{schmudgen_moment_2017}, $\mu'_U$ is determinate, so $\mu_U = \mu'_U$. Finally, this holds for any finite dimensional $U \subset W$ constructed from a finite set $\{w_{m_k, i_k}\}_{k=0}^K$ of basis elements, $\mu$ is determinate by~\cite[Corollary 5.3]{alpay_moment_2015}.
\end{proof}

\subsection{Measures on \texorpdfstring{$G(V)$}{G(V)}} \label{ssec:measure_GV}

Now, we return to our original setting of shuffle polynomials and set $W = L(V)$ so that $\Sym(W^\gr) \cong \Sh(V^*)$ as
commutative algebras. Recall that in order recover our interpretation of $\Sh(V^*)$ as signature polynomials, we
evaluate polynomials $p \in \Sh(V^*)$ on the signature of paths. The positive definite Favard's theorem
in~\Cref{thm:favard_positive} immediately holds in this setting as it only specifies an inner product on $\Sh(V^*)$.
However, when we consider~\Cref{thm:measure_compact,thm:measure_unbounded}, which shows the existence of a measure $\mu$ on $\hW$, this leads to a distinction which is absent in the classical setting: the measure $\mu$ may not come from a measure on path space.\medskip

First, $\hL(V)$ is equipped with the product topology and consists of free Lie series on $V$~\cite{reutenauer_free_1993}. Then, when $G(V)$ is equipped with the projective topology, the tensor exponential $\exp_{\otimes} : \hL (V) \to G(V)$ is a homeomorphism.
Note that for any $p \in \Sh(V^*)$ and $w \in \hL(V)$, we have
\begin{align}
    p(w) = \langle p, \exp_{\otimes}(w) \rangle.
\end{align}
Thus, given a measure $\mu \in \cM(\hL (V))$, the pushforward $\mu_G = (\exp_\otimes)_*\mu \in \cM(G(V))$ satisfies
\begin{align} \label{eq:LV_GV_measures}
    \int_{\hL(V)} p(w) q(w) d\mu(w) = \int_W \langle p \shuffle q, \exp_\otimes(w) \rangle d\mu(w) = \int_{G(V)} \langle p \shuffle q, x \rangle d\mu_G(x).
\end{align}
Thus, inner products on $\Sym(L(V)^\gr)$ induced by measures on $\hL(V)$ correspond to inner products on $\Sh(V^*)$ induced by measures on $G(V)$. However, this need not imply $\mu_G$ is induced by a measure on path space. In fact, we have the following inclusions of groups
\begin{align}
    \im(S) \subset \tG(V) \subset G(V),
\end{align}
where $S$ denotes the signature, so we must further consider the support of $\mu_G$. While the signature of paths have an infinite radius of convergence, so $\im(S) \subset \tG(V)$, the characterization of the image is an open problem~\cite[Problem 1.12]{HL10}.  Therefore, we restrict our attention to the first step and consider when the measure $\mu_G$ is supported in $\tG(V)$. We start with the bounded case of~\Cref{thm:measure_compact}.

\begin{proposition}
    Suppose $\tbp = (\tbp_n)_{n=0}^\infty$ be a sequence in $\Sym(L(V)^\gr)$ such that $\tbp_0 = 1$. Suppose statement (2) in~\Cref{thm:favard_positive} holds, and in addition the Jacobi matrices $J_{m,i}$ are bounded and satisfy
    \begin{align} \label{eq:lyndon_infinite_roc_assumption}
        \sum_{m,i} \|J_{m,i}\| \lambda^m < \infty \quad \text{for all} \quad \lambda > 0.
    \end{align}
    Then, there exists a determinate measure $\tmu_G \in \cM(\tG(V))$ such that $\tbp$ is orthonormal with respect to $\tmu_G$.
\end{proposition}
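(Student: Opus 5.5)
By \Cref{thm:measure_compact} (applied with $W = L(V)$), boundedness of the $J_{m,i}$ already gives a determinate measure $\mu$ on $\hL(V) = \hW$ with compact support $S = \prod_{m,i} S_{m,i}$, where $S_{m,i}$ is the support of the spectral measure of $J_{m,i}$. Since $J_{m,i}$ is self-adjoint, $S_{m,i} \subset [-\|J_{m,i}\|, \|J_{m,i}\|]$. I would then push $\mu$ forward to $\mu_G = (\exp_\otimes)_*\mu$ on $G(V)$. By \eqref{eq:LV_GV_measures}, $\tbp$ is orthonormal with respect to $\mu_G$. It therefore remains to show that $\mu_G$ is concentrated on $\tG(V)$, and to set $\tmu_G$ equal to $\mu_G$ viewed as a measure on $\tG(V)$.

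The key step is to show that every $w = \sum_{m,i} c_{m,i} w_{m,i} \in S$, with $|c_{m,i}| \le \|J_{m,i}\|$ and $\{w_{m,i}\}$ the Hall basis, has $\exp_\otimes(w) \in \tG(V)$. Fix norms on $V^{\otimes m}$ as in \eqref{eq:infiniteRadius} and put $\kappa_{m,i} = \|w_{m,i}\|_{V^{\otimes m}}$. The Hall basis elements are iterated brackets of $m$ letters, so $\kappa_{m,i} \le 2^{m-1}\max_j\|e_j\|^m \le C^m$. Then
\[
\|w\|_\lambda = \sum_{m,i} |c_{m,i}|\kappa_{m,i}\lambda^m \le \sum_{m,i} \|J_{m,i}\|(C\lambda)^m < \infty
\]
for every $\lambda > 0$ by \eqref{eq:lyndon_infinite_roc_assumption}. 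So $w \in \widetilde T\ps{V}$, uniformly over $w \in S$. Because $\|\cdot\|_\lambda$ is submultiplicative on $\widetilde T\ps{V}$ for cross norms, $\|\exp_\otimes(w)\|_\lambda \le e^{\|w\|_\lambda} < \infty$. Together with $\exp_\otimes(w) \in G(V)$, this gives $\exp_\otimes(S) \subset \tG(V)$.

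Finally, I would check measurability and determinacy. The map $\exp_\otimes$ restricted to $S$ is continuous into $\tG(V)$ with its $\|\cdot\|_\lambda$ topology: on $S$ the tails $\sum_{m>M}$ of $\|w\|_\lambda$ are uniformly small, so convergence in the product topology implies convergence in each $\|\cdot\|_\lambda$. Hence $\exp_\otimes(S)$ is compact in $\tG(V)$, and $\tmu_G$ is a well-defined Borel (Radon) measure with compact support. Determinacy transfers from $\mu$ because moments of $\tmu_G$ against $\Sh(V^*)$ are exactly polynomial moments of $\mu$ on $\hL(V)$, and $\exp_\otimes$ is a bijection onto its image. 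Any measure on $\tG(V)$ with the same moments pulls back through $\log_\otimes$ to a measure on $\hL(V)$ with the same moments as $\mu$, hence equal to $\mu$.

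The main obstacle I expect is the uniform control needed to pass from the product topology to the $\|\cdot\|_\lambda$ topology. This requires the Hall basis norm bound $\kappa_{m,i} \le C^m$ and the fact that the entries of elements of $S$ are bounded by $\|J_{m,i}\|$, which is where assumption \eqref{eq:lyndon_infinite_roc_assumption} enters.
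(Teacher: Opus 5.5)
Your proof is correct and has the same core as the paper's. Both take the compactly supported $\mu$ from \Cref{thm:measure_compact}, note that its support lies in $\prod_{m,i}[-\|J_{m,i}\|,\|J_{m,i}\|]$, and use \eqref{eq:lyndon_infinite_roc_assumption} to see that every support point, viewed in $T\ps{V}$, has infinite radius of convergence. Both then conclude via $\|\exp_\otimes(w)\|_\lambda\le e^{\|w\|_\lambda}$. Your form of that bound is the correct version of the paper's display, which has a stray $\sum_n$ on the right. Conversely, your $\|w\|_\lambda=\sum_{m,i}\cdots$ should be an inequality.

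The differences are these.

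\begin{enumerate}
\item The paper normalises the Lie basis to $\|\ell_{m,i}\|=1$. You keep the Hall basis and absorb $\|\ell_{m,i}\|\le C^m$ into $\lambda$. Some such convention is needed, because \eqref{eq:lyndon_infinite_roc_assumption} depends on how the basis is scaled.
\item You check that $\exp_\otimes|_S$ is continuous into $\tG(V)$ with its $\|\cdot\|_\lambda$-topology, which the paper passes over. This is what makes $\tmu_G$ a Borel measure on $\tG(V)$, and it also gives compact support there.
\item For determinacy, the paper notes that the expected signature of $\tmu_G$ has infinite radius of convergence and invokes \cite[Theorem 6.1]{characteristic}. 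That fact is immediate from your uniform bound $\sup_{w\in S}e^{\|w\|_\lambda}<\infty$. You instead transport the determinacy of $\mu$, already supplied by \Cref{thm:measure_compact}, through $\log_\otimes$.
\end{enumerate}

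Your determinacy route keeps the argument inside the paper's own framework, but it leaves one step implicit. From $(\log_\otimes)_*\nu=\mu=(\log_\otimes)_*\tmu_G$ you still need to conclude $\nu=\tmu_G$. This holds because the Borel $\sigma$-algebra of the separable metric space $\tG(V)$ is generated by the coordinates $g\mapsto\langle w,g\rangle$, and each of these is a polynomial in the coordinates of $\log_\otimes g$. Alternatively, apply Lusin--Souslin to the continuous injection $\log_\otimes$. The paper's route is shorter and leaves such measure-theoretic points to the cited theorem.
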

\begin{proof}
    Because the Jacobi matrices are bounded,~\Cref{lem:bounded_J} implies that statement (1) of~\Cref{thm:measure_compact} holds, so there exists a compact measure $\mu \in \cM(\hW)$. Recall that this measure is defined by $\mu(B) = \langle E(B) \Phi_0, \Phi_0 \rangle_{H(L(V))}$, where $E$ is the spectral measure with respect to the family $\{J_{m,i}\}$ of Jacobi matrices. Let $E_{m,i}$ denote the 1D spectral measure of the operator $J_{m,i}$, which is supported on the spectrum $\sigma(J_{m,i})$. As $J_{m,i}$ is a bounded, its spectral radius is bounded by $\|J_{m,i}\|$. By~\eqref{eq:lyndon_infinite_roc_assumption}, this implies that the support of the joint spectral measure $\mu$ on $W$ consists of elements $\ba = \sum_{m} \bc_m^\tr \bw_m$, where $\bw_m$ denotes the column vector of degree $m$ basis elements of $W$, such that
    \begin{align}
        \sum_{m} \|\bc_m\| \lambda^m < \infty
    \end{align}
    for all $\lambda > 0$. Let $\exp^n_{\otimes}(\ba)$ denote the degree $n$ component of $\exp_\otimes(\ba)$. This implies that
    \begin{align}
        \sum_{n=0}^\infty \|\exp^n_{\otimes}(\ba)\| \lambda^n  \leq \sum_{n=0}^\infty \exp\left( \sum_{m=0}^\infty \lambda^m \|\bc_m\| \right) < \infty,
    \end{align}
    where we assume that the Lie algebra basis $\ell_{m,i} = \iota(w_{m,i})$ are normalized so that $\|\ell_{m,i}\|_{T(V)} =1$ for all $m,i$. Thus $\tmu =\beta_* \mu \in \cM(\tG(V))$. This also implies that the expectation of $\tmu_G$ will have infinite radius of convergence, so $\tmu_G$ is determinate by~\cite[Theorem 6.1]{characteristic}.
\end{proof}
This condition on the Jacobi matrices imply that the support of $\tmu$ consist of group-like elements whose logarithm (viewed as elements of $T\ps{V}$) itself has infinite radius of convergence. However, a conjecture by Lyons-Sidarova~\cite{lyons_radius_2006}, and recently refined in~\cite{boedihardjo_cartans_2025} states that: the log-signature of a tree-reduced bounded variation path has infinite radius of convergence if and only if it is conjugate to a line segment. If true, this conjecture places extremely strong conditions on a potential measure on path space which induces $\tmu$. \medskip

In order to obtain infinite radius of convergence in the case of unbounded Jacobi operators, we consider a truncated setting where we restrict $L(V)$ to Lie elements of degree at most $M$, denoted $L_M(V)$. In this case, we take $W = L_M(V)$, and we define 
\begin{align}
    G_M(V) = \{ \beta(\bw) \, : \, \bw \in L_M(V)\} \subset \tG(V).
\end{align}
Note that both $\Sym(L_M(V)^\gr)$ and $G_M(V)$ still contain elements of arbitrary degree (in particular, $\Sym(L_M(V)^\gr)$ is still infinite dimensional).



\begin{proposition}
    Suppose $\tbp = (\tbp_n)_{n=0}^\infty$ is a sequence in $\Sym(L_M(V)^\gr)$ such that $\tbp_0 = 1$. Suppose statement (2) in~\Cref{thm:favard_positive} holds, and the matrices satisfy~\eqref{eq:unbounded_condition}. Then, there exists a determinate measure $\tmu_G \in \cM(G_M(V))$ such that $\tbp$ is orthonormal with respect to $\tmu_G$.
\end{proposition}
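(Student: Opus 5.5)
The plan is to apply \Cref{thm:measure_unbounded} with $W = L_M(V)$, and then push the resulting measure forward to $G_M(V)$ through the tensor exponential, exactly as in \eqref{eq:LV_GV_measures}.

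First, $W = L_M(V) = \bigoplus_{m=1}^M L^m(V)$ is a graded vector space of finite type. It is in fact finite dimensional, so $\hW = W$. Statement (2) of \Cref{thm:favard_positive} holds by hypothesis, and so does \eqref{eq:unbounded_condition}. \Cref{lem:unbounded_self_adjoint,lem:unbounded_commute} therefore make the finitely many Jacobi operators $J_{m,i}$ (with $m\le M$) a family of strongly commuting self-adjoint operators. \Cref{lem:cyclic_vector} supplies the cyclic vector $\Phi_0$. \Cref{thm:measure_unbounded} then gives a determinate probability measure $\mu$ on $L_M(V)$ with respect to which $\tbp$ is orthonormal. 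Because the family is finite, we do not even need the infinite-product spectral theorem: the joint spectral measure lives on $\R^{\dim L_M(V)}$, and determinacy follows directly from \cite[Theorem 14.2]{schmudgen_moment_2017}, with no appeal to \cite[Corollary 5.3]{alpay_moment_2015}.

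Second, set $\tmu_G = (\exp_\otimes)_*\mu$. The key point, and the main thing to check, is that every $\bw \in L_M(V)$ yields $\exp_\otimes(\bw)\in\tG(V)$. Write $\bw=\sum_{m\le M}\bc_m^\tr\bw_m$. Using submultiplicativity of the projective norm on the tensor algebra, we get
\[
\sum_n \lambda^n\|\exp^n_\otimes(\bw)\| \le \exp\Big(\sum_{m\le M}\lambda^m\|\bc_m\|\Big)<\infty
\]
for all $\lambda>0$. Here the sum in the exponent is finite, so no growth condition on the Jacobi matrices is needed; this is precisely why truncating to degree $M$ replaces the hypothesis \eqref{eq:lyndon_infinite_roc_assumption}. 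Hence $\tmu_G$ is a measure on $G_M(V)\subset\tG(V)$. Continuity of $\exp_\otimes$ into $\tG(V)$ with its $\|\cdot\|_\lambda$ topology is evident on the finite-dimensional space $L_M(V)$, so the pushforward is well defined as a Borel measure. By \eqref{eq:LV_GV_measures}, the inner products coincide, so $\tbp$ is orthonormal with respect to $\tmu_G$.

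Third comes determinacy of $\tmu_G$, which is the step I expect to be the main obstacle. There are two candidate meanings of determinacy.

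If determinacy is meant among measures on $G_M(V)$, then it transfers from $\mu$. Indeed, $\exp_\otimes$ is a bijection from $L_M(V)$ onto $G_M(V)$ with inverse $\log$. Moreover, polynomials on $L_M(V)$ are exactly the shuffle polynomials restricted to $G_M(V)$, since $p(\bw)=\langle p,\exp_\otimes(\bw)\rangle$. So two measures on $G_M(V)$ with the same moments pull back to measures on $L_M(V)$ with the same moments, and these agree by determinacy of $\mu$.

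If instead determinacy is wanted among all measures on $\tG(V)$, I would try to invoke \cite[Theorem 6.1]{characteristic}. That requires showing $\E_{\tmu_G}S$ has infinite radius of convergence. This does not follow from \eqref{eq:unbounded_condition} alone, since $\mu$ may have heavy tails. I would therefore state the result in the $G_M(V)$ sense, and remark that the stronger form holds whenever $\int\exp(\sum_m\lambda^m\|\bc_m\|)\,d\mu<\infty$ for all $\lambda>0$.
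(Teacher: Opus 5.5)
Your existence and orthonormality steps coincide with the paper's proof: you apply \Cref{thm:measure_unbounded} with $W=L_M(V)$, push $\mu$ forward along $\exp_\otimes$, and use \eqref{eq:LV_GV_measures}. Your explicit check that $\exp_\otimes(L_M(V))\subset\tG(V)$ is a welcome addition.

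Where you differ is determinacy. The paper simply asserts that $\E_{\tmu_G}S$ has infinite radius of convergence and invokes \cite[Theorem 6.1]{characteristic}. Your objection to this is correct. A concrete instance:
\begin{itemize}
\item Take $V=\R$ and $M=1$, so that $L_1(\R)=\R$ and $G_1(\R)=\tG(\R)$, and let $\mu(\mrd x)=\mrd x/\cosh(\pi x)$.
\item Its orthonormal polynomials are Meixner--Pollaczek polynomials, with zero diagonal and off-diagonal Jacobi entries $a_n=n/2$.
\item Here \eqref{eq:unbounded_condition} reads $\sum_n 1/a_{n+1}=\infty$, so it holds.
\item Yet $\sum_n\lambda^n\|\E S^n\|=\E\cosh(\lambda x)$, which equals $\sec(\lambda/2)$ for $\lambda<\pi$ and is infinite for $\lambda\ge\pi$.
\end{itemize}
So the radius of convergence is $\pi$, and \cite[Theorem 6.1]{characteristic} does not apply. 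Your replacement, pulling determinacy back along the bijection $\exp_\otimes\colon L_M(V)\to G_M(V)$, is sound.

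However, you do not need to retreat to determinacy within $G_M(V)$. Your argument already gives determinacy among all measures on $\tG(V)$ (even on $G(V)$) with the same expected signature, which is the sense the paper intends. The extra step is as follows.
\begin{itemize}
\item Use the identification $p(\ell)=\langle p,\exp_\otimes(\ell)\rangle$ that you already rely on. Under it, every generator $w_{m,i}\in L(V)^\gr$ of $\Sh(V^*)\cong\Sym(L(V)^\gr)$ with $m>M$ vanishes on $L_M(V)$. Hence $\langle w_{m,i}\shuffle w_{m,i},\E_{\tmu_G}S\rangle=0$.
\item Let $\nu$ have the same expected signature as $\tmu_G$. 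The shuffle identity gives $\E_\nu\langle w_{m,i},x\rangle^2=0$ for each of these countably many $(m,i)$.
\item So $\nu$-a.s.\ $\log x$ has no components of degree $>M$, i.e.\ $\nu(G_M(V))=1$.
\item Your pullback argument then forces $\nu=\tmu_G$.
\end{itemize}
The exponential-moment condition you propose is therefore unnecessary. Your route proves the full statement with no radius-of-convergence input. The paper's route is valid only when $\mu$ has light enough tails for $\E_{\tmu_G}S$ to have infinite radius of convergence.
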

\begin{proof}
    Applying~\Cref{thm:measure_unbounded} to $W = L_M(V)$, there exists a measure $\mu \in \cM(L_M(V))$ such that $\tbp$ is orthonormal with respect to its induced linear functional. Then, the pushforward by the restricted tensor exponential $\exp_\otimes : L_M(V) \to G_M(V)$ yields a measure $\mu_G \in \cM(G_M(V))$. As the expectation of $\mu_G$ has infinite radius of convergence in $G(V)$, the measure $\mu_G$ is determinate by~\cite[Theorem 6.1]{characteristic}.
\end{proof}

In fact, when we further truncate the total degree of polynomials considered, we can take advantage of Chow's theorem~\cite{chow_uber_1940}, which shows the surjectivity of the truncated path signature. This leads to a measure on path space which induces the inner product up to polynomials of the truncated total order.

\begin{theorem}
    Suppose $\tbp = (\tbp_n)_{n=0}^\infty$ is a sequence in $\Sym(L_M(V)^\gr)$ such that $\tbp_0 = 1$. Suppose statement (2) in~\Cref{thm:favard_positive} holds, and the matrices satisfy~\eqref{eq:unbounded_condition}. Then, there exists a measure $\rho_M \in \cM(C^{1-\text{var}}([0,T], V))$ such that $(\tbp_n)_{n=0}^M$ is orthonormal with respect to $S_*\rho_M \in \cM(\tG(V))$.
\end{theorem}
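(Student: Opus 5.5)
Start from the preceding proposition: under the stated hypotheses there is a measure $\tmu_G$ on $G_M(V)\subset \tG(V)$, the pushforward of a measure $\mu$ on $L_M(V)$ under $\exp_\otimes$, for which $\tbp$ is orthonormal. Only the polynomials $(\tbp_n)_{n=0}^M$ are needed. Their orthonormality relations $\int \tbp_n\tbp_k^\tr \,d\tmu_G = \delta_{nk} I$ involve only shuffle products of degree at most $2M$. By \eqref{eq:LV_GV_measures}, they depend on $\tmu_G$ only through the pushforward $\nu \coloneqq (\pi_{2M})_*\tmu_G$, where $\pi_{2M}\colon G(V)\to G_{2M}(V)$ is the projection onto the truncated group in $T_{2M}(V)$. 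So it suffices to build a measure $\rho_M$ on $C^{1\text{-var}}([0,T],V)$ with $(\pi_{2M}\circ S)_*\rho_M = \nu$. Then for $p,q$ of degree at most $M$,
\[
\int \langle p\shuffle q, S(x)_{0,T}\rangle\, d\rho_M(x) = \int \langle p\shuffle q, g\rangle\, d\nu(g) = \int p\,q \, d\tmu_G,
\]
and this gives the conclusion.

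To construct $\rho_M$, invoke Chow's theorem \cite{chow_uber_1940}: the map $x\mapsto \pi_{2M}S(x)_{0,T}$ from bounded-variation (even piecewise linear) paths onto the step-$2M$ free nilpotent group $G_{2M}(V)$ is surjective. I would then choose a \emph{measurable} section $\sigma\colon G_{2M}(V)\to C^{1\text{-var}}([0,T],V)$ with $\pi_{2M}S(\sigma(g))=g$ and set $\rho_M\coloneqq\sigma_*\nu$.

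The main obstacle is measurability of the section. I would handle it in one of two ways. The first option is a measurable selection theorem (Kuratowski--Ryll-Nardzewski, or Jankov--von Neumann). The map $\Sigma\colon \mathcal P_K\to G_{2M}(V)$ from piecewise linear paths with $K$ pieces is continuous on a Polish space ($\cong (V)^K$), and it is surjective for $K$ large, which is a finite-dimensional consequence of Chow. Hence $g\mapsto\Sigma^{-1}(g)$ is a closed-valued map with nonempty values, and it admits a universally measurable (or Borel, after restricting to compact pieces) selector; $\nu$-measurability suffices. The second option is more explicit. Write $g=\exp(\ell)$ with $\ell$ in the truncated Lie algebra, decompose $\ell$ in a Hall basis, and realise each bracket monomial by a concatenation of commutator loops with parameters depending continuously on the coefficients. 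Then use the Baker--Campbell--Hausdorff formula and an inductive correction degree by degree. This produces a piecewise-continuous, hence Borel, section.

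Finally, I would reparametrise all paths to $[0,T]$, which does not change the signature, and note that pushforward measures are probability measures. With this, $\rho_M$ is the required measure, and $(\tbp_n)_{n\le M}$ is orthonormal with respect to $S_*\rho_M$ on $\tG(V)$, since signatures of bounded-variation paths lie in $\tG(V)$.
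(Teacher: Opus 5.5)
Your proof is correct, but it uses a different mechanism from the paper's. The paper also starts from the measure $\mu$ on $L_M(V)$ given by \Cref{thm:measure_unbounded}. It then uses the generalised Tchakaloff theorem of Bayer--Teichmann to replace $\mu$ by a finitely supported cubature measure $\sum_{i=1}^N\lambda_i\delta_{\ell_i}$ with the same low-order moments. Chow's theorem is applied atom by atom, and $\rho_M=\sum_i\lambda_i\delta_{X_i}$. Because everything is finitely supported, no measurability question arises, and the result is an explicit cubature formula on path space built from finitely many piecewise linear paths.

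You instead transport the whole law $\nu$ of the level-$2M$ truncation through a measurable right inverse of $x\mapsto\pi_{2M}S(x)_{0,T}$. The price is a selection theorem. Kuratowski--Ryll-Nardzewski applies because $\Sigma(U)$ is $\sigma$-compact, hence Borel, for open $U$. Your uniform bound on the number of linear pieces needs a short argument, e.g.\ Sard plus dilation invariance of the image. Alternatively you can avoid it by working on $\bigsqcup_K V^K$.

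In return you need no cubature theorem, and you get exact equality of laws $(\pi_{2M}\circ S)_*\rho_M=\nu$, not just agreement of finitely many moments. This proves more than is stated. For group-like $g$, every $P\in\Sym(L_M(V)^\gr)$ satisfies $\langle P,g\rangle=P(\log g)$, which depends only on the level-$M$ truncation of $g$. So your $\rho_M$ makes the entire sequence $(\tbp_n)_{n\ge0}$ orthonormal. No finitely supported measure can do that, because its $L^2$ space is finite-dimensional.

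Your truncation bookkeeping is also more precise than the paper's sketch, and you handle both of these points correctly:
\begin{itemize}
\item Orthonormality of $(\tbp_n)_{n\le M}$ needs moments up to total degree $2M$, not a cubature of order $M$.
\item Chow's theorem gives only $\pi_{2M}S(X)_{0,T}=\pi_{2M}\exp_\otimes(\ell)$, not $\log S(X)=\ell$ exactly.
\end{itemize}
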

\begin{proof}
    Applying~\Cref{thm:measure_unbounded} to $W = L_M(V)$, there exists a measure $\mu \in \cM(L_M(V))$ such that $\tbp$ is orthonormal with respect to its induced linear functional. Then, by the generalized Tchakaloff's theorem~\cite[Theorem 2]{bayer_proof_2006}, there exists a cubature measure $\mu_M = \sum_{i=1}^N \lambda_i \delta_{\ell_i} \in \cM(L_M(V))$ of order $M$ with respect to generators with inhomogeneous grading, where $N \in \N, \lambda_i \in \R$, and $\ell_i \in L_M(V)$. In particular, $(\tbp_n)_{n=0}^M$ is orthonormal with respect to $\mu_M$. Then, by Chow's theorem~\cite{chow_uber_1940} (see also~\cite[Theorem 7.28]{FV10}), there exists a piecewise linear path $X_i \in C^{1-\text{var}}([0,T], V)$ such that $\log(S(X_i)) = \ell_i$. Let $\rho_M = \sum_{i=1}^N \lambda_i \delta_{\ell_i} \in \cM(C^{1-\text{var}}([0,T], V))$ Thus, $(\tbp_n)_{n=0}^M$ is also orthonormal with respect to $S_* \rho_M$.
\end{proof}

\section{Orthogonal polynomials on Wiener space}\label{sec:brownian}

\subsection{The non-time-augmented case: non-existence}
\label{sec:nonexist}

Consider a centered real-valued Gaussian random variable with variance \(\sigma^2\).
Hermite polynomials are the orthogonal family associated with the measure
\(\mu(x)=\frac{1}{\sqrt{2\pi\sigma^2}}\mathrm{e}^{-\frac{1}{2\sigma^2}x^2}\), i.e., a sequence of polynomials
\(H_n(x)\), \(n\ge 0\) such that
\[
  \int H_n(x)H_m(x)\,\mathrm{d}\mu(x) = n!\delta_{n,m}.
\]

They may be characterized by their generating function
\[
  G(t, x) = \exp\left( tx - \frac12\sigma^2t^2 \right) = \sum_{n=0}^\infty\frac{1}{n!}H_n(x)t^n.
\]

The first few are \(H_0(x) = 1\), \(H_1(x) = x\), \(H_2(x) = x^2 - \sigma^2\), \(H_3(x)=x^3-3\sigma^2x\), and in general
\[
  H_n(x) = n!\sum_{k=0}^{\lfloor n/2\rfloor}\frac{\sigma^{2k}}{k!(n-2k)!2^k}x^{n-2k}.
\]

Fix a vector space \(V\) and let \(\boldsymbol{X}\) be a centered Gaussian vector with covariance matrix \(\Sigma\),
which we regard as a quadratic form on \(V^*\).
Multivariate Hermite polynomials are the block-orthogonal family associated with the pushforward measure
\(\mu=\boldsymbol{X}_*\mathbb{P}=\mathbb{P}\circ\boldsymbol{X}^{-1}\).
They may be characterized by their generating function
\begin{equation}\label{eq:hermite.gen}
  G(t\lambda,\boldsymbol{x})\coloneqq\exp\left(t\lambda(\boldsymbol{x})-\frac{t^2}{2}\Sigma(\lambda)\right)=\sum_{n=0}^{\infty}\frac{t^n}{n!}H_n(\lambda,\boldsymbol{x})
\end{equation}

Since for a given \(\lambda\in V^*\) the random variable \(\lambda(\boldsymbol{X})\) is Gaussian with variance
\(\Sigma(\lambda)\) it follows that the polynomials are the same univariate ones as above, evaluated at \(\lambda(\boldsymbol{x})\) with variance \(\Sigma(\lambda)\).

From the identity \(\mathbb{E}[\mathrm{e}^{\lambda(\boldsymbol{X)}}]=\mathrm{e}^{\frac12\Sigma(\lambda)}\) it follows that
\[
  \mathbb{E}[G(s\lambda,\boldsymbol{X})G(t\lambda',\boldsymbol{X})] = \mathrm{e}^{\frac12st B(\lambda,\lambda')}
\]
where \(B(\lambda,\lambda') = \frac12(\Sigma(\lambda+\lambda')-\Sigma(\lambda)-\Sigma(\lambda'))\) is the associated
bilinear form.
From this we see that
\[
  \int H_n(\lambda,\boldsymbol{x}) H_m(\lambda',\boldsymbol{x})\,\mathrm{d}\mu(\boldsymbol{x}) =
  n!B(\lambda,\lambda')^n\delta_{n,m}.
\]
 
It should also be clear that \(H_n(\lambda,\boldsymbol{x})\) depends polynomially on both \(\lambda\) and
\(\boldsymbol{x}\), homogeneously so on \(\lambda\) but not on \(x\).
For example, \(H_0(\lambda,\boldsymbol{x})=1, H_1(\lambda,\boldsymbol{x})=\lambda_ix^i\) and 
\[
  H_2(\lambda,\boldsymbol{x}) = \lambda_i\lambda_j(x^ix^j - \Sigma^{ij}),
\]
and so on, where the Einstein summation convention is in place.
We note that, although these expressions depend on a choice of basis for \(V\) and the corresponding dual basis for
\(V^*\), the polynomials \(H_n\) are independent of this.

Indeed, we have intentionally omitted any reference to the dimension of \(\boldsymbol{X}\) since it is evident from
\eqref{eq:hermite.gen} that the expression for \(H_n\) does not depend on it.
In fact, \eqref{eq:hermite.gen} makes sense on any finite-dimensional space carrying a fixed quadratic form \(\Sigma\) and it is independent of any choice apart from this data.
Multivariate Hermite polynomials are natural in a categorical sense: consider the category \(\mathsf{Quad}\) of
\emph{quadratic spaces}, i.e., pairs \((V, \Sigma_V)\) where \(V\) is a finite-dimensional vector space and \(\Sigma\) is
a quadratic form. Maps between quadratic spaces are \emph{injective isometries} (and not necessarily surjective), that is, linear maps \(\varphi\colon U\to V\) such that \(\Sigma_V(\varphi u)=\Sigma_U(u)\).

Now, let \((U^*,\Sigma_U)\) and \((V^*,\Sigma_V)\) be nondegenerate quadratic spaces and \(\varphi\colon U\to V\) be such that 
\(\varphi^*\colon V^*\to U^*\), \(\varphi^*\lambda\mapsto\lambda\circ\varphi\) is an isometry.
For fixed \(\lambda\in U^*\), the map $H_n(\lambda(\cdot);\Sigma(\lambda))$ sends a monomial in \(U\) to a polynomial, so
it lies in $\Sym(U)^*$. By homogeneity of \(H_n\) in \(\lambda\) this map is then \(H_n\colon
\Sym_n(U^*)\to \Sym(U)^*\).
The identity
\[
  G_V(\varphi^*\lambda,\boldsymbol{x}) = \exp\left( \lambda(\varphi\boldsymbol{x}) - \frac12\Sigma_V(\varphi^*\lambda) \right) = G_U(\lambda,\varphi\boldsymbol{x})
\]
implies \(H_n(\varphi^*\lambda,\cdot) = H_n(\lambda,\varphi\cdot) = \varphi^*H_n(\lambda,\cdot)\) for all \(n\ge 0\), that is, the diagram
\[
\begin{tikzcd}
  \Sym(V^*)\dar["\Sym(\varphi^*)"]\rar["H_V"]&\Sym(V)^*\dar["\Sym(\varphi)^*"]\\
  \Sym(U^*)\rar["H_U"]&\Sym(U)^*
\end{tikzcd}
\]
commutes.
By making use of the isomorphism \(V\cong V^*\) induced by the nondegenerate bilinear form \(B_V\) associated to
\(\Sigma_V\) we obtain a commuting diagram
\[
\begin{tikzcd}
  \Sym(V^*)\dar["\Sym(\varphi^*)"]\rar["H_V"]&\Sym(V^*)\dar["\Sym(\varphi^*)"]\\
  \Sym(U^*)\rar["H_U"]&\Sym(U^*),
\end{tikzcd}
\]
This property has the advantage, already hinted above, of making any computation involving these polynomials completely
independent of any choice but the covariance operator \(\Sigma\).

Now, we would like a similar map for the non-time-augmented Stratonovich signature of Brownian motion.
In the more restrictive setting where morphisms are taken to be bijective isometries (so that the dimension of \(U\) is
fixed), this is possible, the map being given by Gram-Schmidt block orthogonalization.
Unfortunately, in the more interesting case of morphisms being (not necessarily bijective) isometries this is not possible beyond degree 4 as we now show.

We turn to the $d$-dimensional Wiener measure, without drift for the time being. Switching notation slightly to that in
\Cref{sec:L2}, we recall the expression for the expected signature of Brownian motion \cite{fawcett, LV04}, sometimes called Fawcett's formula, in any given
basis of \(\mathbb{R}^d\),
\begin{equation}\label{eq:fawcett}
\mathbb E S(W)_{0,T} = \exp\! \bigg( \frac T2 \sum_{\gamma = 1}^d \gamma\gamma\bigg) = \sum_{n = 0}^\infty \frac{T^n}{2^n n!}\sum_{\gamma_1,\ldots, \gamma_n = 1}^d \gamma_1\gamma_1 \ldots \gamma_n\gamma_n .
\end{equation}
For simplicty we set \(T=1\) for the remainder of this section.
We would like to block-orthogonalise words in $\mathrm{Sh}(\mathbb R^d)$ w.r.t.\ this measure. Following our considerations for Hermite, we make the following definition of \emph{natural maps}.

\begin{definition}\label{def:natural}
  We call a family of maps \(H_V\colon T(V)\to T(V)\) \emph{natural} if for any injective isometry \(\varphi\colon V\to W\) the diagram
  \begin{center}
    \begin{tikzcd}
      T(V)\rar{H_V}\dar{T(\varphi)}&T(V)\dar{T(\varphi)}\\
      T(W)\rar{H_W}&T(W)
    \end{tikzcd}
  \end{center}
  commutes, where \(T(\varphi)\colon T(V)\to T(W)\) is the induced map \(T(\varphi)(v_1\dotsm
  v_n)=\varphi(v_1)\dotsm\varphi(v_n)\).
\end{definition}

\begin{remark}[Restriction property]\label{def:nice}
%
In particular, if \(\varphi\colon\mathbb{R}^d\to\mathbb{R}^D\) with \(d<D\) is the inclusion map, the orthogonalization
map \(H_{\mathbb{R}^D}\) restricts to \(H_{\mathbb{R}^d}\) on words only containing the first \(d\) coordinates.
\end{remark}

The following is the main result of this subsection.

\begin{theorem}\label{thm:non_natural}
    For $V=\R^d$, define $H_V\colon T(V)\to T(V)$ by $H_V(w) = p_w$ for every word $w$, where $p_w$ is the block-orthogonalisation endomorphism defined in \eqref{eq:p} with respect to $d$-dimensional Brownian motion.
    Then $H_V$ is not natural for $d\geq 2$.
\end{theorem}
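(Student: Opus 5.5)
The approach is to exhibit an explicit counterexample to the commuting square of \Cref{def:natural} for the inclusion $\varphi\colon \R^d \hookrightarrow \R^{d+1}$, i.e.\ to the restriction property of \Cref{def:nice}. Since Brownian motion has identity covariance, the injective isometries are exactly the orthogonal embeddings. The form \eqref{eq:innerProd} is computed from Fawcett's formula \eqref{eq:fawcett} as $(u,v) = \langle u \shuffle v, \exp(\tfrac12\sum_\gamma \gamma\gamma)\rangle$. This form is invariant under orthogonal maps, so for $V = \R^d$ everything is determined by shuffle combinatorics and the number of letters.

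The key structural point is uniqueness. By \eqref{eq:p}, $p_w^{(D)}$ is the unique element of the form $w + (\text{lower degree terms in } T_{|w|-1}(\R^D))$ that is orthogonal to all of $T_{|w|-1}(\R^D)$; positive-definiteness of the Brownian form on $\Sh$ follows, e.g., from \Cref{thm:density} together with the polynomial structure. Naturality for $\varphi$ would mean that $T(\varphi)(p_w^{(d)}) = p_{\varphi w}^{(d+1)}$ for every word $w$ in the letters $1,\dots,d$. By uniqueness, naturality therefore fails as soon as we find a word $w$ and a lower-degree word $u$ containing the new letter $d+1$ such that
\[
\big(T(\varphi)\,p_w^{(d)},\, u\big)_{\R^{d+1}} \neq 0 .
\]
This reduces the theorem to evaluating a single inner product, which gives a clean certificate.

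To find the certificate, I first work with $d=2$, $D=3$. I would use symbolic linear algebra: Gram matrices built from \eqref{eq:fawcett} and Gram--Schmidt in $T_n(\R^2)$. This computes $p_w^{(2)}$ for all words up to degree $5$ or $6$ and then tests inner products against words such as $u = 33\,v$ or $3v3$ with $v$ a short word in the letters $1,2$. Through degree $4$ this must come out zero, consistent with the remark that naturality holds up to degree $4$. I expect the first nonzero pairing in degree $5$ or $6$, e.g.\ for a word like $w = 12121$ or $w=121212$. The mechanism is that shuffling $w$ against a $u$ carrying a $33$ block produces contractions whose count depends on the number of available letters. These contractions are not cancelled by the lower-order corrections, which were fitted only inside $T(\R^2)$.

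For general $d \ge 2$, I keep $w$ in the letters $1,2$ and let $u$ use the letter $d+1$. The difficulty is that $p_w^{(d)}$ itself depends on $d$, because Fawcett's formula sums $\gamma\gamma$ over all $d$ letters. I would therefore redo the computation with $d$ as a symbolic parameter. Every coefficient of $p_w^{(d)}$, and hence the certificate inner product, is a rational function of $d$: each full contraction in \eqref{eq:fawcett} over an internal index contributes a factor $d$. The main obstacle is then to show that this rational function has no root at any integer $d\ge2$. I would first factor it exactly and check its numerator. If that is awkward, a fallback is to choose $w$ and $u$ so that the $d$-dependence enters only through a low-degree polynomial whose roots are visibly non-integral.
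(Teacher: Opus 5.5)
Your approach is sound, and its core is the same computation as the paper's: a degree-$5$ Gram--Schmidt built from Fawcett's formula \eqref{eq:fawcett}, exhibiting a failing square for $\varphi\colon\R^2\hookrightarrow\R^3$. The difference is which side you read the certificate from. The paper solves the system in $T_4(\R^3)$ and observes that $p_{\texttt{11112}}$ contains $\frac1{96}(\texttt{332}-\texttt{233})$, i.e.\ letters absent from $w$. You orthogonalise only in $T_4(\R^2)$ and pair the embedded result with one word containing $\texttt{3}$. The two are equivalent, since $\delta\coloneqq T(\varphi)p^{(2)}_w-p^{(3)}_w\in T_4(\R^3)$ and $(T(\varphi)p^{(2)}_w,u)=(\delta,u)$ for all $u\in T_4(\R^3)$.

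The implication you actually use needs no uniqueness. It only needs $p^{(3)}_{\varphi w}\perp T_4(\R^3)$, which holds by definition. This matters, because positive-definiteness does not follow from \Cref{thm:density}: density says nothing about the kernel of $\ell\mapsto\langle\ell,S\rangle$ (think of a Dirac mass).

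Your search does close at degree $5$. Insert the paper's $p^{(2)}_{\texttt{11112}}=\texttt{11112}-\frac1{80}\texttt{211}-\frac{29}{80}\texttt{112}+\frac5{96}\texttt{2}$ and count shuffles against \eqref{eq:fawcett}:
\[
\big(T(\varphi)p^{(2)}_{\texttt{11112}},\texttt{332}\big)_{\R^3}=\tfrac1{64}-\tfrac1{80}\cdot\tfrac1{24}-\tfrac{29}{80}\cdot\tfrac1{12}+\tfrac5{96}\cdot\tfrac14=-\tfrac1{480}\neq0 .
\]
So $u=\texttt{33}v$ with $v=\texttt{2}$ is a certificate, and your guess $\texttt{12121}$ is not needed. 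What your version buys is that only the $d=2$ projection is required. By the sign-flip symmetries, that projection is a $5\times5$ system in the span of $\texttt{2},\texttt{112},\texttt{121},\texttt{211},\texttt{222}$. Together with one inner product, everything can be checked by hand.

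The symbolic-$d$ part goes beyond what the paper proves. The paper exhibits only the $\R^2\hookrightarrow\R^3$ failure, which already shows the family is not natural. As written, your symbolic-$d$ argument is only a sketch, because Gram--Schmidt in $T_4(\R^d)$ is not a system of fixed size. Before $d$ can be a parameter, you must use invariance under $O(d-2)$ acting on the letters $\texttt{3},\dots,d$ to reduce to a $d$-independent spanning set. That set is built from words in $\texttt{1},\texttt{2}$ together with contractions $\sum_{\gamma\geq3}\gamma\gamma$, and its Gram entries are polynomial in $d$. You must also treat separately the small $d$ at which this spanning set becomes linearly dependent.

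The paper's data suggest the target. Both reported formulas can be written as $\texttt{11112}+c_d\sum_{\gamma=1}^d(\gamma\gamma\texttt{2}-\texttt{2}\gamma\gamma)-\frac38\texttt{112}+\frac5{96}\texttt{2}$, with $c_2=\frac1{80}$ and $c_3=\frac1{96}$. So all the $d$-dependence sits in one cup term, and if this form persists, failure for $\R^d\hookrightarrow\R^{d+1}$ follows from $c_{d+1}\neq0$.
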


 \begin{proof}
 Using the \texttt{Wolfram} code listed in \Cref{ap:code}, we obtain for \(d=3\) and \(w=\texttt{11112}\), the image
\[
  p_w=\texttt{11112}+\frac{1}{96}\,\texttt{332}-\frac{1}{96}\,\texttt{233}-\frac{1}{96}\,\texttt{211}-\frac{35}{96}\,\texttt{112}+\frac{5}{96}\,\texttt{2}.
\]
Since \(p_w\) depends on letters not in \(w\) the map cannot be natural.
For an example with \(d=2\), and the same \(w\) we obtain
\[
  p_w=\texttt{11112}-\frac{1}{80}\,\texttt{211}-\frac{29}{80}\,\texttt{112}+\frac{5}{96}\,\texttt{2}
\]
and we see that the coefficients have changed.
 \end{proof}

While the above proof is short (following the computational effort in \Cref{ap:code}), it perhaps does not reveal the mechanism behind the failure of naturality.
We now provide an alternative, less computational, explanation for this behaviour, albeit without providing all the details.

While the map \(w\mapsto p_w\) is completely determined by \eqref{eq:p}, computing the coefficients in this expression
involves solving a linear system which might be difficult and not very insightful. In particular, it is difficult to
asses the naturality of this map in its current form.
Therefore, we attempt to come up with a procedure for computing it
that takes the structure of the shuffle algebra and Fawcett's formula \eqref{eq:fawcett} into account,
i.e. by using the formula
\begin{equation}\label{eq:shuffle_Fawcett}
 (u,w) = \scal{u\shuffle w , \E S(W)_{0,1}}
\;.   
\end{equation}
For this we develop some graphical notation. Each dot will be a placeholder for a letter in the alphabet $\{1,\ldots,d\}$, and each arc will stand for a contraction, i.e. we replace the subword $(\alpha,\beta)$ formed by the contraction by a Kronecker delta $\delta_{\alpha\beta}$.
Replacing indices with degree, the first three projections
(which do satisfy the restriction property) can be written as
\begin{equation}\label{eq:diagEx}
e_0 = \varnothing, \quad e_1 = \tikz[baseline = -0.1ex]{\draw[fill] (0,0) circle [radius=0.04];}\ ,\quad
e_2 = \tikz[baseline = -0.1ex]{
\draw[fill] (0,0) circle [radius=0.04];
\draw[fill] (0.3,0) circle [radius=0.04];
}\ - \frac 12\
\tikz[baseline = -0.1ex]{
\draw[fill] (0,0) circle [radius=0.04];
\draw[fill] (0.3,0) circle [radius=0.04];
\draw (0,0) .. controls (0,0.3) and (0.3,0.3) .. (0.3,0);
}\ ,\quad
e_3 = \tikz[baseline = -0.1ex]{
\draw[fill] (0,0) circle [radius=0.04];
\draw[fill] (0.3,0) circle [radius=0.04];
\draw[fill] (0.6,0) circle [radius=0.04];
}\ - \frac 14(\
\tikz[baseline = -0.1ex]{
\draw[fill] (0,0) circle [radius=0.04];
\draw[fill] (0.3,0) circle [radius=0.04];
\draw[fill] (0.6,0) circle [radius=0.04];
\draw (0,0) .. controls (0,0.3) and (0.3,0.3) .. (0.3,0);
}\ + \ \tikz[baseline = -0.1ex]{
\draw[fill] (0,0) circle [radius=0.04];
\draw[fill] (0.3,0) circle [radius=0.04];
\draw[fill] (0.6,0) circle [radius=0.04];
\draw (0.3,0) .. controls (0.3,0.3) and (0.6,0.3) .. (0.6,0);
}\ ) \text{.}
\end{equation}
Note $e_n$ on its own does not identify an element of the shuffle algebra: this is only true once $n$ letters are supplied.
More precisely, for a word $w= \alpha_1\ldots\alpha_n$, $p_w$ is given by $e_n$ where we assign the letters $\alpha_1,\ldots,\alpha_n$ to the $n$ dots.

We make the following general ansatz for $e_n$.
Since $e_n$ must be monic, we start with the string of $n$ dots (the word itself). We want to impose $(e_n, e_m) = 0$ for all $m < n$, and it is enough to do this for $m \equiv n \pmod{2}$. Taking the inner product as in \eqref{eq:shuffle_Fawcett}
of the word itself with any word of degree $n - 2$,
by the formula \eqref{eq:fawcett},
will leave some arcs between consecutive nodes, so it makes sense to add some multiple of such elements, where the multiple will be solved for. Taking the inner product of this element with elements of degree $n - 4$ will yield diagrams with two arcs, each of which between consecutive nodes; this is not all, however: it will also diagrams containing nested arcs, since the single arcs included in the preceding step are \say{skipped over}. Continuing recursively in this manner suggests the following form for $e_n$: begin with a string of $n$ nodes with no pairings, and add unknown multiples of diagrams in which the pairings obey the following rules:
\begin{itemize}
\item The pairing is non-crossing, i.e.\ no two arcs intersect;
\item Each node under an arc is itself paired.
\end{itemize}
This means in general, our diagrams will consist of \say{islands} of nodes that are paired alternated with strings of
unpaired nodes. Similar diagrams have appeared in \cite{anshelevich_appell}.
At degree $4$ this becomes
\[
e_4 = \ \tikz[baseline = -0.1ex]{
\draw[fill] (0,0) circle [radius=0.04];
\draw[fill] (0.3,0) circle [radius=0.04];
\draw[fill] (0.6,0) circle [radius=0.04];
\draw[fill] (0.9,0) circle [radius=0.04];
} \ + x_1 ( \ \tikz[baseline = -0.1ex]{
\draw[fill] (0,0) circle [radius=0.04];
\draw[fill] (0.3,0) circle [radius=0.04];
\draw[fill] (0.6,0) circle [radius=0.04];
\draw[fill] (0.9,0) circle [radius=0.04];
\draw (0,0) .. controls (0,0.3) and (0.3,0.3) .. (0.3,0);
} \ ) + x_2( \ \tikz[baseline = -0.1ex]{
\draw[fill] (0,0) circle [radius=0.04];
\draw[fill] (0.3,0) circle [radius=0.04];
\draw[fill] (0.6,0) circle [radius=0.04];
\draw[fill] (0.9,0) circle [radius=0.04];
\draw (0.3,0) .. controls (0.3,0.3) and (0.6,0.3) .. (0.6,0);
} \ ) + x_3( \ \tikz[baseline = -0.1ex]{
\draw[fill] (0,0) circle [radius=0.04];
\draw[fill] (0.3,0) circle [radius=0.04];
\draw[fill] (0.6,0) circle [radius=0.04];
\draw[fill] (0.9,0) circle [radius=0.04];
\draw (0.6,0) .. controls (0.6,0.3) and (0.9,0.3) .. (0.9,0);
} \ ) + y_1( \ \tikz[baseline = -0.1ex]{
\draw[fill] (0,0) circle [radius=0.04];
\draw[fill] (0.3,0) circle [radius=0.04];
\draw[fill] (0.6,0) circle [radius=0.04];
\draw[fill] (0.9,0) circle [radius=0.04];
\draw (0,0) .. controls (0,0.3) and (0.3,0.3) .. (0.3,0);
\draw (0.6,0) .. controls (0.6,0.3) and (0.9,0.3) .. (0.9,0);
} \ )
+ y_2( \ \tikz[baseline = -0.1ex]{
\draw[fill] (0,0) circle [radius=0.04];
\draw[fill] (0.3,0) circle [radius=0.04];
\draw[fill] (0.6,0) circle [radius=0.04];
\draw[fill] (0.9,0) circle [radius=0.04];
\draw (0.3,0) .. controls (0.3,0.2) and (0.6,0.2) .. (0.6,0);
\draw (0.0,0) .. controls (0.0,0.4) and (0.9,0.4) .. (0.9,0);
} \ ) .
\]
Solving for $(e_4, e_2) = 0 = (e_4, e_0)$ we obtain a unique solution!
\begin{equation}\label{eq:level4}
e_4 = \ \tikz[baseline = -0.1ex]{
\draw[fill] (0,0) circle [radius=0.04];
\draw[fill] (0.3,0) circle [radius=0.04];
\draw[fill] (0.6,0) circle [radius=0.04];
\draw[fill] (0.9,0) circle [radius=0.04];
} \ -\frac 16 ( \ \tikz[baseline = -0.1ex]{
\draw[fill] (0,0) circle [radius=0.04];
\draw[fill] (0.3,0) circle [radius=0.04];
\draw[fill] (0.6,0) circle [radius=0.04];
\draw[fill] (0.9,0) circle [radius=0.04];
\draw (0,0) .. controls (0,0.3) and (0.3,0.3) .. (0.3,0);
} \ ) -\frac 16( \ \tikz[baseline = -0.1ex]{
\draw[fill] (0,0) circle [radius=0.04];
\draw[fill] (0.3,0) circle [radius=0.04];
\draw[fill] (0.6,0) circle [radius=0.04];
\draw[fill] (0.9,0) circle [radius=0.04];
\draw (0.3,0) .. controls (0.3,0.3) and (0.6,0.3) .. (0.6,0);
} \ ) -\frac 16( \ \tikz[baseline = -0.1ex]{
\draw[fill] (0,0) circle [radius=0.04];
\draw[fill] (0.3,0) circle [radius=0.04];
\draw[fill] (0.6,0) circle [radius=0.04];
\draw[fill] (0.9,0) circle [radius=0.04];
\draw (0.6,0) .. controls (0.6,0.3) and (0.9,0.3) .. (0.9,0);
} \ ) + \frac{1}{24}( \ \tikz[baseline = -0.1ex]{
\draw[fill] (0,0) circle [radius=0.04];
\draw[fill] (0.3,0) circle [radius=0.04];
\draw[fill] (0.6,0) circle [radius=0.04];
\draw[fill] (0.9,0) circle [radius=0.04];
\draw (0,0) .. controls (0,0.3) and (0.3,0.3) .. (0.3,0);
\draw (0.6,0) .. controls (0.6,0.3) and (0.9,0.3) .. (0.9,0);
} \ )
+ \frac{1}{12}( \ \tikz[baseline = -0.1ex]{
\draw[fill] (0,0) circle [radius=0.04];
\draw[fill] (0.3,0) circle [radius=0.04];
\draw[fill] (0.6,0) circle [radius=0.04];
\draw[fill] (0.9,0) circle [radius=0.04];
\draw (0.3,0) .. controls (0.3,0.2) and (0.6,0.2) .. (0.6,0);
\draw (0.0,0) .. controls (0.0,0.4) and (0.9,0.4) .. (0.9,0);
} \ ) .
\end{equation}
Now we ask whether this ansatz works in general. At level 5 it reads
\begin{equation}\label{eq:e5}
\begin{split}
e_5 ={}& 
\ \tikz[baseline = -0.1ex]{
\draw[fill] (0,0) circle [radius=0.04];
\draw[fill] (0.3,0) circle [radius=0.04];
\draw[fill] (0.6,0) circle [radius=0.04];
\draw[fill] (0.9,0) circle [radius=0.04];
\draw[fill] (1.2,0) circle [radius=0.04];} \ + x_1( \ \tikz[baseline = -0.1ex]{
\draw[fill] (0,0) circle [radius=0.04];
\draw[fill] (0.3,0) circle [radius=0.04];
\draw[fill] (0.6,0) circle [radius=0.04];
\draw[fill] (0.9,0) circle [radius=0.04];
\draw[fill] (1.2,0) circle [radius=0.04];
\draw (0,0) .. controls (0,0.3) and (0.3,0.3) .. (0.3,0);
} \ ) + x_2( \ \tikz[baseline = -0.1ex]{
\draw[fill] (0,0) circle [radius=0.04];
\draw[fill] (0.3,0) circle [radius=0.04];
\draw[fill] (0.6,0) circle [radius=0.04];
\draw[fill] (0.9,0) circle [radius=0.04];
\draw[fill] (1.2,0) circle [radius=0.04];
\draw (0.3,0) .. controls (0.3,0.3) and (0.6,0.3) .. (0.6,0);
} \ ) + x_3( \ \tikz[baseline = -0.1ex]{
\draw[fill] (0,0) circle [radius=0.04];
\draw[fill] (0.3,0) circle [radius=0.04];
\draw[fill] (0.6,0) circle [radius=0.04];
\draw[fill] (0.9,0) circle [radius=0.04];
\draw[fill] (1.2,0) circle [radius=0.04];
\draw (0.6,0) .. controls (0.6,0.3) and (0.9,0.3) .. (0.9,0);
} \ )+ x_4( \ \tikz[baseline = -0.1ex]{
\draw[fill] (0,0) circle [radius=0.04];
\draw[fill] (0.3,0) circle [radius=0.04];
\draw[fill] (0.6,0) circle [radius=0.04];
\draw[fill] (0.9,0) circle [radius=0.04];
\draw[fill] (1.2,0) circle [radius=0.04];
\draw (0.9,0) .. controls (0.9,0.3) and (1.2,0.3) .. (1.2,0);
} \ )\\
&+ y_1( \ \tikz[baseline = -0.1ex]{
\draw[fill] (0,0) circle [radius=0.04];
\draw[fill] (0.3,0) circle [radius=0.04];
\draw[fill] (0.6,0) circle [radius=0.04];
\draw[fill] (0.9,0) circle [radius=0.04];
\draw[fill] (1.2,0) circle [radius=0.04];
\draw (0,0) .. controls (0,0.3) and (0.3,0.3) .. (0.3,0);
\draw (0.6,0) .. controls (0.6,0.3) and (0.9,0.3) .. (0.9,0);
} \ ) + y_2( \ \tikz[baseline = -0.1ex]{
\draw[fill] (0,0) circle [radius=0.04];
\draw[fill] (0.3,0) circle [radius=0.04];
\draw[fill] (0.6,0) circle [radius=0.04];
\draw[fill] (0.9,0) circle [radius=0.04];
\draw[fill] (1.2,0) circle [radius=0.04];
\draw (0,0) .. controls (0,0.3) and (0.3,0.3) .. (0.3,0);
\draw (0.9,0) .. controls (0.9,0.3) and (1.2,0.3) .. (1.2,0);
} \ ) + y_3( \ \tikz[baseline = -0.1ex]{
\draw[fill] (0,0) circle [radius=0.04];
\draw[fill] (0.3,0) circle [radius=0.04];
\draw[fill] (0.6,0) circle [radius=0.04];
\draw[fill] (0.9,0) circle [radius=0.04];
\draw[fill] (1.2,0) circle [radius=0.04];
\draw (0.3,0) .. controls (0.3,0.3) and (0.6,0.3) .. (0.6,0);
\draw (0.9,0) .. controls (0.9,0.3) and (1.2,0.3) .. (1.2,0);
} \ ) + y_4( \ \tikz[baseline = -0.1ex]{
\draw[fill] (0,0) circle [radius=0.04];
\draw[fill] (0.3,0) circle [radius=0.04];
\draw[fill] (0.6,0) circle [radius=0.04];
\draw[fill] (0.9,0) circle [radius=0.04];
\draw[fill] (1.2,0) circle [radius=0.04];
\draw (0.0,0) .. controls (0.0,0.4) and (0.9,0.4) .. (0.9,0);
\draw (0.3,0) .. controls (0.3,0.2) and (0.6,0.2) .. (0.6,0);
} \ ) + y_5( \ \tikz[baseline = -0.1ex]{
\draw[fill] (0,0) circle [radius=0.04];
\draw[fill] (0.3,0) circle [radius=0.04];
\draw[fill] (0.6,0) circle [radius=0.04];
\draw[fill] (0.9,0) circle [radius=0.04];
\draw[fill] (1.2,0) circle [radius=0.04];
\draw (0.3,0) .. controls (0.3,0.4) and (1.2,0.4) .. (1.2,0);
\draw (0.6,0) .. controls (0.6,0.2) and (0.9,0.2) .. (0.9,0);
} \ ) .
\end{split}
\end{equation}
Now we claim that the linear system arising from the constraints $(e_5, e_3) = 0 = (e_5, e_1)$ does not have a solution.
Indeed, the constraints are obviously equivalent to $(e_5, w_3) = 0 = (e_5, w_1)$, where $w_n$ denotes a word of length $n$.

On the one hand, the equation \((e_5,w_1)=0\) yields
\begin{equation}\label{eq:w1}
  0 = \dotsb + \left(y_5+\frac14 x_3\right)( \ \tikz[baseline = -0.1ex]{
\draw[fill] (0,0) circle [radius=0.04];
\draw[fill] (0.3,0) circle [radius=0.04];
\draw[fill] (0.6,0) circle [radius=0.04];
\draw[fill] (0.9,0) circle [radius=0.04];
\draw[fill] (1.2,0) circle [radius=0.04];
\draw[fill, red] (1.5,0) circle [radius=0.04];
\draw (0,0) .. controls (0,0.6) and (1.5,0.6) .. (1.5,0);
\draw (0.3,0) .. controls (0.3,0.4) and (1.2,0.4) .. (1.2,0);
\draw (0.6,0) .. controls (0.6,0.2) and (0.9, 0.2) .. (0.9,0);
} \ ) + \dotsb
\end{equation}
where $\cdots$ represents terms with different full pairings and the red dot indicates the letter not in the original word.
Taking an alphabet with at least three letters and choosing the dot assignments as $\alpha\beta\gamma\gamma\beta\alpha$ for the six dots in \eqref{eq:w1},
we see that all terms in $\cdots$ vanish and the constraints imply \(y_5+\tfrac14 x_3=0\).

On the other hand, the equation \((e_5,w_3)=0\) gives
\[
  0 = \left(\frac{1}{48}+\frac16 x_3\right)( \ \tikz[baseline = -0.1ex]{
\draw[fill] (0,0) circle [radius=0.04];
\draw[fill] (0.3,0) circle [radius=0.04];
\draw[fill] (0.6,0) circle [radius=0.04];
\draw[fill] (0.9,0) circle [radius=0.04];
\draw[fill] (1.2,0) circle [radius=0.04];
\draw[fill, red] (1.5,0) circle [radius=0.04];
\draw[fill, red] (1.8,0) circle [radius=0.04];
\draw[fill, red] (2.1,0) circle [radius=0.04];
\draw (0,0) .. controls (0,0.6) and (1.5,0.6) .. (1.5,0);
\draw (0.3,0) .. controls (0.3,0.4) and (1.8,0.4) .. (1.8,0);
\draw (0.6,0) .. controls (0.6,0.2) and (0.9, 0.2) .. (0.9,0);
\draw (1.2,0) .. controls (1.2,0.3) and (2.1,0.3) .. (2.1,0);
} \ ) + \dotsb + \left(\frac14 y_5+\frac{1}{12} x_3\right)( \ \tikz[baseline = -0.1ex]{
\draw[fill] (0,0) circle [radius=0.04];
\draw[fill] (0.3,0) circle [radius=0.04];
\draw[fill] (0.6,0) circle [radius=0.04];
\draw[fill] (0.9,0) circle [radius=0.04];
\draw[fill] (1.2,0) circle [radius=0.04];
\draw[fill, red] (1.5,0) circle [radius=0.04];
\draw[fill, red] (1.8,0) circle [radius=0.04];
\draw[fill, red] (2.1,0) circle [radius=0.04];
\draw (0,0) .. controls (0,0.6) and (1.5,0.6) .. (1.5,0);
\draw (0.3,0) .. controls (0.3,0.4) and (1.2,0.4) .. (1.2,0);
\draw (0.6,0) .. controls (0.6,0.2) and (0.9, 0.2) .. (0.9,0);
\draw (1.8,0) .. controls (1.8,0.3) and (2.1,0.3) .. (2.1,0);
} \ ) + \dotsb
\;.
\]
It can be checked that these are the only terms producing the corresponding diagrams.
The difference in the coefficients comes from the fact that, in each cases, there are different numbers of shuffles in \eqref{eq:shuffle_Fawcett} that yield the diagram, and also the number of contractions is different.
This yields \(y_5+\tfrac13 x_3=0\) and \(\frac18+x_3=0\)
which contradicts the earlier equation \(y_5+\tfrac14 x_3=0\).

This proves that the general ansatz as in \eqref{eq:e5} is not possible.
What is missing to make the above argument an alternative proof of
\Cref{thm:non_natural} is a justification for why naturality would imply that $p_w$ must be given by $e_n$ as in the above ansatz.

Every natural transformation in the sense of \Cref{def:natural} must be \(O(V)\)-equivariant.
The later class of maps is indexed by Brauer diagrams \cite{brauer} (see
also the more modern article \cite{lehrer_brauer_2015}).
Roughly speaking, there are four types of basic diagrams (maps) \cite[Theorem 2.6]{lehrer_brauer_2015} yielding all
possible transformations by composition and tensorization.
These are the identity map, transposition of two tensor factors; \emph{caps}, corresponding to contractions
\(\alpha\beta\mapsto\delta_{\alpha\beta}\) as above and \emph{cups}, corresponding to maps of the form
\[
  \mathbb{R}\ni\lambda\mapsto\lambda\sum_{\alpha=1}^d\alpha\alpha,
\]
where \(d\) denotes the dimension of the underlying vector space.
Note that of these four, only cups depend on the dimension and are therefore not natural, hence our ansatzes
in \eqref{eq:diagEx}-\eqref{eq:e5} include only identities and caps.
The choice of including only noncrossing diagrams is motivated by Fawcett's formula,
but we do not give a non-computational proof that crossing diagrams do not appear.

We have, however, analysed computationally the same ansatz as \eqref{eq:diagEx}-\eqref{eq:e5} where we now include
all possible pairings up to level 5.
This extended ansatz yields the same solutions up to degree four: the extra degrees of freedom are set to zero, and at
degree 5 the system still contains a subset of inconsistent equations. See \Cref{ap:code} for Wolfram code listings
exploring this.

\subsection{The time-augmented case: It\^o orthogonal polynomials}\label{subsec:ito}

We have seen that, without including time, the block orthogonalisation
with respect to the signature of Brownian motion is not natural in the sense of \Cref{thm:non_natural}.
We now show that including time as a coordinate solves this problem, as it enables the use of the It\^o integral, whose product rule involves quadratic variation corrections.
Let $V$ be an Euclidean vector space, for which we temporarily fix an orthonormal frame, i.e.\ an isomorphism $V \cong \mathbb R^d$; we will return to coordinate-free considerations later on, and recall that even without the choice of a basis the inner product yields a canonical identification $V \cong V^*$.
Write $\widetilde V \coloneqq \mathbb R \oplus V$.
Elements of $T(\widetilde V)$ can be identified with linear combinations of words in the alphabet $[d]_\zero \coloneqq \{\zero,1,\ldots,d\}$; write $[d]^\bullet_\zero$ for the set of such words. Note the distinction between the letter $\zero$ and the empty word $\varnothing$, which spans $\mathbb R = \widetilde V^{\otimes 0}$. Recall the \emph{quasi-shuffle} product on $T(\widetilde V)$, given recursively on words as follows and extended bilinearly:
\begin{equation}\label{eq:qshprod}
u\alpha \qshuffle v\beta = (u \qshuffle v\beta)\alpha + (u\alpha \qshuffle v)\beta + (u \qshuffle v)[\alpha,\beta] \qquad
\text{with}\quad [\alpha,\beta] = \one^{\alpha \neq \zero}_{\alpha\beta} \zero .
\end{equation}
Denote $\widehat{\mathrm{Sh}}(\widetilde V) = T(\widetilde V)$ endowed with this product. We consider two gradings on this space: $\# w$ denotes the number of letters in the word $w$, and
\begin{equation}
|w| \coloneqq \# w + \text{number of \zero's in $w$.}
\end{equation}
Given a $V$-valued Wiener process $B$, write $\widetilde B_t \coloneqq (t, B_t)$ for its time augmentation in the $0^\text{th}$ coordinate. We denote $\widehat S(\widetilde B)$ its It\^o signature, i.e.\ \eqref{eq:sig} but where the integrals are defined by Riemann and It\^o integration. Recall (e.g.\ \cite{baudoin}) the product rule \eqref{eq:shuffle} is replaced by
\begin{equation}\label{eq:qshuffle}
\langle u, \widehat S(\widetilde B) \rangle\langle v, \widehat S(\widetilde B) \rangle = \langle u \qshuffle v, \widehat S(\widetilde B) \rangle.
\end{equation}
The bracket in \eqref{eq:qshprod} then indexes quadratic variation, i.e.\ if $v_1, v_2 \in V$
\begin{equation}\label{eq:bracket_rho}
\langle [v_1, v_2] , \dif B_t \rangle = \varrho_{v_1,v_2} \dif t
\end{equation}
where $\varrho_{v_1,v_2} = \E[B^{v_1}_1 B^{v_2}_1]$ denotes the (constant) correlation of the components of $B$ in the directions $v_1, v_2$. Following the same logic of \eqref{eq:innerProd}, we define the inner product on $\widehat{\mathrm{Sh}}(\widetilde V)$
\begin{equation}\label{eq:innerquasi}
(u, v)_{\qshuffle} \coloneqq \mathbb E \langle u, \widehat S(\widetilde B)_{0,T} \rangle \langle v,  \widehat S(\widetilde B)_{0,T} \rangle = \langle u \qshuffle v , \mathbb E \widehat S(\widetilde B)_{0,T} \rangle .
\end{equation}
The introduction of time as a coordinate has the consequence of making the inner product degenerate, for example $\langle \zero - T \varnothing, \widehat S(\widetilde B)_{0,T}\rangle = 0$ a.s. The next proposition identifies a complement to the nullspace (cf.\ \cite[Theorem 3.9]{dupire2023functional} for a linear independence statement).
\begin{proposition}\label{eq:nullspace}
Let $N$ be the nullspace of $( \, \cdot \, , \, \cdot \, )_{\qshuffle}$ and denote $\widehat{\mathrm{Sh}}{}^\circ(\widetilde V)$ the vector subspace of $\widehat{\mathrm{Sh}}(\widetilde V)$ generated by words that do not end in a \emph{\zero}. Then $\widehat{\mathrm{Sh}}(\widetilde V) = N \oplus \widehat{\mathrm{Sh}}{}^\circ(\widetilde V)$.
\end{proposition}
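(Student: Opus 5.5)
We need to show two things: every word is congruent modulo $N$ to an element of $\widehat{\mathrm{Sh}}{}^\circ(\widetilde V)$ (spanning), and $N \cap \widehat{\mathrm{Sh}}{}^\circ(\widetilde V) = 0$ (independence). Throughout, $N$ coincides with the kernel of $u \mapsto \langle u, \widehat S(\widetilde B)_{0,T}\rangle$ in $L^2$, since $(u,u)_{\qshuffle} = \E\langle u,\widehat S(\widetilde B)_{0,T}\rangle^2$.

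\textbf{Spanning.} I will show by induction on $\# w$ that every word ending in $\zero$ is congruent mod $N$ to a combination of words not ending in $\zero$. The key identity is the integration-by-parts / quasi-shuffle relation with the one-letter word $\zero$, whose signature coordinate is the constant $T$. By \eqref{eq:qshprod}, since $[\alpha,\zero]=0$ for every letter $\alpha$ (the bracket vanishes when one argument is $\zero$, so the correction term disappears),
\[
u\beta \qshuffle \zero = (u \qshuffle \zero)\beta + u\beta\zero .
\]
Pairing with $\widehat S$ and using $\langle \zero, \widehat S\rangle = T$ gives
\[
\langle u\beta\zero, \widehat S\rangle = T\langle u\beta,\widehat S\rangle - \langle (u\qshuffle \zero)\beta, \widehat S\rangle \quad\text{a.s.}
\]
Hence $u\beta\zero - T u\beta + (u\qshuffle\zero)\beta \in N$. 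If $\beta\neq\zero$, the right-hand side consists of words ending in $\beta$, so we are done. If $\beta=\zero$, then the words $u\zero$ and $(u\qshuffle \zero)\zero$ have strictly fewer trailing-$\zero$ complications relative to $\#$: more precisely, I would induct on the number of trailing $\zero$'s together with length, or more simply use the direct formula $\langle v\zero^k,\widehat S_{0,T}\rangle = \int_0^T \langle v,\widehat S_{0,t}\rangle \frac{(T-t)^{k-1}}{(k-1)!}\,dt$. This lets me rewrite $v\zero^k$ via the identity above applied repeatedly, which eventually expresses it by words ending in the last non-$\zero$ letter of $v$, or by the empty word if $v$ is a power of $\zero$. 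The base case is $\zero^k \equiv \frac{T^k}{k!}\varnothing$. I will phrase this as a well-founded induction on $(\#w, \text{number of trailing } \zero\text{'s})$.

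\textbf{Independence.} Suppose $u=\sum c_w w$ with all $w$ not ending in $\zero$ and $\langle u,\widehat S(\widetilde B)_{0,T}\rangle=0$ a.s. Group the terms by last letter: $u = c_\varnothing \varnothing + \sum_{i=1}^d u_i i$. Then
\[
\langle u,\widehat S_{0,T}\rangle = c_\varnothing + \sum_{i=1}^d \int_0^T \langle u_i,\widehat S_{0,t}\rangle\, dB^i_t
\]
is an Itô integral. Taking expectations gives $c_\varnothing=0$. The Itô isometry (taking $B$ with identity covariance in the fixed orthonormal frame) then gives $\sum_i\int_0^T \E\langle u_i,\widehat S_{0,t}\rangle^2\,dt=0$, so $\langle u_i,\widehat S_{0,t}\rangle=0$ a.s. for a.e.\ $t$, and by continuity for all $t\in[0,T]$.

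The \textbf{main obstacle} is that $u_i$ may now end in $\zero$, so the inductive hypothesis does not apply directly. I would strengthen the claim to: if $\langle v,\widehat S_{0,t}\rangle = 0$ for all $t \in [0,T]$, then $v\in\mathrm{span}\{\text{words ending in } \zero\} + $ (the corresponding "time-polynomial" kernel), and in particular $v=0$ when $v$ is in the complement. To do this, write $v = v^\circ + \sum_k v_k\zero$ and use that $t\mapsto \langle v,\widehat S_{0,t}\rangle$ is a semimartingale. The martingale part vanishes identically, which forces the $dB$-integrands to vanish; the drift part is $\sum_k\langle v_k,\widehat S_{0,t}\rangle\,dt$, so $\langle v_k\text{-part},\widehat S_{0,t}\rangle=0$ for all $t$. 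A joint induction on length then shows that every word coefficient not ending in $\zero$ vanishes. Since $u$ has only such words, $u=0$. I will carefully set up this joint induction (on $\#$, for the statement ``vanishing for all $t$ implies all coefficients are zero'' restricted to $\widehat{\mathrm{Sh}}{}^\circ$ at each stage, applied after peeling letters), and this bookkeeping is where I expect the real work to be.
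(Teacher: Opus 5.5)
Your route is the paper's: spanning by quasi-shuffling with the letter $\zero$ (whose coordinate is the constant $T$), and independence via the It\^o isometry. The isometry reduces matters to a statement about the whole process $t\mapsto\langle v,\widehat S(\widetilde B)_{0,t}\rangle$, which is proved by uniqueness of the semimartingale decomposition and induction on length.

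\textbf{Where the argument breaks.} The problem is how you state that process-level lemma. What the paper proves, and what you need, is: if $\langle v,\widehat S(\widetilde B)_{0,t}\rangle=0$ for all $t\in[0,T]$, then $v=0$, for \emph{every} $v\in\widehat{\mathrm{Sh}}(\widetilde V)$, words ending in $\zero$ included. There is no ``time-polynomial kernel'' at the level of processes. The degeneracy giving $N$ comes only from evaluating at the single time $T$: for example $\zero-T\varnothing$ yields the process $t-T$, which is not identically zero.

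With your weaker conclusion (only coefficients of words not ending in $\zero$ vanish) the argument does not close, for two reasons:
\begin{itemize}
\item The $u_i$ may contain words ending in $\zero$; for $u=\zero i$ one has $u_i=\zero$. So you need $u_i=0$ outright, not just its non-$\zero$-ending part.
\item Your final sentence applies the lemma to $u$ itself, whose process is only known to vanish at $t=T$.
\end{itemize}

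\textbf{The fix is already in your sketch.} Evaluating at $t=0$ kills the constant term. The vanishing martingale part kills the $\dif B$-integrands. The vanishing drift gives $\langle v_\zero,\widehat S(\widetilde B)_{0,t}\rangle\equiv 0$ for the part $v_\zero\zero$ of $v$ ending in $\zero$. Induction on length then gives $v=0$ entirely, and you apply this to each $u_i$.

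\textbf{A smaller point in the spanning step.} For $\beta=\zero$ your identity does not reduce anything by itself. If $u\beta\zero=v\zero^m$ with $v$ not ending in $\zero$, then $(u\qshuffle\zero)\zero$ contains $v\zero^m$ itself with coefficient $m-1$. You can collect it (total coefficient $m\neq0$). Alternatively, as the paper does, use
\[
v\zero^k\qshuffle\zero=(k+1)\,v\zero^{k+1}+(\text{words with exactly }k\text{ trailing }\zero\text{'s})
\]
and induct on the number of trailing $\zero$'s alone.
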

\begin{proof}
Since $( \, \cdot \, , \, \cdot \, )_{\qshuffle}$ is just the $L^2$ inner product applied to the evaluation of a word on $\widehat S(\widetilde B)_{0,T}$, it suffices to show that (i) if $\rho \in \widehat{\mathrm{Sh}}{}^\circ(\widetilde V)$ such that $\mathbb E \langle \rho, \widehat S(\widetilde B)_{0,T} \rangle^2 = 0$ then $\rho = 0$, and (ii) that for any $w \in [d]^\bullet_\zero$ there exists a $\rho \in \widehat{\mathrm{Sh}}{}^\circ(\widetilde V)$ such that $\langle w\zero, \widehat S(\widetilde B)_{0,T} \rangle = \langle \rho, \widehat S(\widetilde B)_{0,T} \rangle$.

The claim (ii) follows inductively on the number of trailing $\zero$'s, since, by the (quasi-)shuffle relations, $\langle w\zero, \widehat S(\widetilde B)_{0,T} \rangle$ can be expressed as the product $\langle \zero, \widehat S(\widetilde B)_{0,T} \rangle \langle w, \widehat S(\widetilde B)_{0,T} \rangle$ minus terms with fewer trailing zeros, and $\langle \zero, \widehat S(\widetilde B)_{0,T} \rangle = T \in \mathbb R$. In order to prove (i), we first show that if the function 
\[
[0,T] \ni t \mapsto \langle \sigma, \widehat S(\widetilde B)_{0,t}\rangle
\]
vanishes identically on some $\sigma \in \widehat{\mathrm{Sh}}(\widetilde V)$, then $\sigma = 0$. Proceed by induction on the maximum length $n$ of a word in the linear combination for $\sigma$. If $n = 0$, $\langle \sigma, \widehat S(\widetilde B)_{0,t}\rangle \equiv \sigma \in \mathbb R$. For $n > 0$, write $\sigma = \lambda \varnothing + \sum_i \lambda_{\zero, i} w_{\zero,i}\zero + \sum_{\alpha = 1}^d \sum_i \lambda_{\alpha, i_\alpha} w_{\alpha,i}\alpha$. The process $\langle \sigma, \widehat S(\widetilde B)_{0,t}\rangle \equiv 0$ is then equal to a constant term plus a linear combination of Riemann and It\^o integrals, all of whose integrands must vanish identically, by Doob-Meyer, independence of components, and the fact that if $\int_0^t H_s \dif B^\alpha_s$ or $\int_0^t H_s \dif s$ vanishes for all $t \in [0,T]$ then $H$ must also vanish identically. Therefore $\lambda = 0$, $\sum_i \lambda_{\zero, i} w_{\zero,i}\zero = 0$ and $\sum_i \lambda_{\alpha, i_\alpha} w_{\alpha,i}\alpha = 0$ for all $\alpha$ and the induction is complete. Now, assume $\rho = \mu \varnothing + \sum_{\alpha = 1}^d \sum_j \lambda_{\alpha,j} w_j \alpha$ is as in the statement of (i) above. By the It\^o isometry
\[
0 = \mathbb E \langle \rho, \widehat S(\widetilde B)_{0,T} \rangle^2 = \sum_{\alpha = 1}^d \int_0^T \mathbb E\langle {\textstyle \sum_j} \lambda_{\alpha,j} w_j, \widehat S(\widetilde B)_{0,t} \rangle^2 \dif t
\]
and the integrand does not vanish identically, since the process $t \mapsto \langle {\textstyle \sum_j} \lambda_{\alpha,j} w_j, \widehat S(\widetilde B)_{0,t} \rangle$ does not, as just proved. 
\end{proof}

Write $[d]_\zero^\circ \subset [d]_\zero^\bullet$ for the subset of words that do not end in a $\zero$, so $\mathrm{span}[d]_\zero^\circ = \widehat{\mathrm{Sh}}{}^\circ(\widetilde V)$. Using It\^o integration has the benefit that, by independence and orthogonality of Wiener chaos, many pairs of words are already orthogonal. For $u, v \in [d]_\zero^\circ$ write $u \sim_\zero v$ if $u$ and $v$ are equal after stripping away zeros and leaving other letters in their place. Observe that (by the martingale property of It\^o integrals)
\begin{equation}\label{eq:eitosig}
\mathbb E \widehat S(\widetilde B)_{0,T} = \sum_{n = 0}^\infty \frac{T^n}{n!} \zero^n
\end{equation}
and therefore $(u,v)_{\qshuffle} = 0$ whenever $u \not \sim_\zero v$, since in this case $u \qshuffle v$ will be a linear combination of words none of which are of the form $\zero^n$. Moreover, if $u \sim_\zero v$, $(u,v)_{\qshuffle}$ will only depend on the number and position of the non-zero letters, i.e.\ it will equal the inner product of the words in the binary alphabet $\{\zero, \tone \!\}$ in which all non-zero entries in $u$ and $v$ are replaced with a $\tone$, for which we compute
\begin{equation}\label{eq:zeroOne}
(\zero^{i_1} \tone \zero^{i_2} \ldots \zero^{i_k} \tone, \zero^{j_1} \tone \zero^{j_2} \ldots \zero^{j_k} \tone)_{\qshuffle} = \frac{T^{i+j+k}}{(i+j+k)!} \prod_{r = 0}^k \binom{i_r + j_r}{i_r}, \qquad \sum_{l = 1}^k i_l = i, \ \sum_{l = 1}^k j_l = j.
\end{equation}
in which $i_l, j_l$ may be $0$. Define a linear order $<_\zero$ on each equivalence class mod $\sim_\zero$ as follows:
\begin{enumerate}
\item If $u \sim_\zero v$ and $\# u < \# v$ ($\Leftrightarrow |u| < |v| \Leftrightarrow$ $u$ has fewer zeros than $v$), then $u <_\zero v$;
\item If $u \sim_\zero v$ and $\# u = \# v$, then $u <_\zero v$ if $u$ is less than $v$ in the lexicographic order.
\end{enumerate}
The following definition is similar to \eqref{eq:p} but adapted to the current setting with $\zero$ playing a special role.
\begin{definition}[It\^o orthogonal basis]\label{def:ehat}
For $u \in [d]^\circ_\zero$ define $\widehat p_u$ by performing Gram-Schmidt orthogonalisation along its equivalence class mod $\sim_\zero$, according to its linear order:
\[
\widehat p_w = w - \sum_{\substack{v \sim_\zero w \\ v <_\zero w}} \frac{(w, \widehat p_v)_{\qshuffle}}{(\widehat p_v, \widehat p_v)_{\qshuffle}} \widehat p_v .
\]
\end{definition}
Each $\widehat p_w$ is a linear combination of other words that contain the same non-zero letters (as a subword, in fact), with the same number of zeros or fewer. 
We compute the terms of $\widehat p$ up to words of length $3$. We use letters $\alpha,\beta,\ldots$ to denote letters in $[d]$.
\begin{align*}
\widehat p_\varnothing &= \varnothing, \quad \widehat p_\zero = \zero, \quad \widehat p_{\zero \tone} = \zero \tone - \frac 12 \tone, \quad \widehat p_{\zero\zero\tone} = \zero\zero\tone - \frac 12 \zero\tone + \frac{1}{12} \tone \\
\widehat p_{\tone\tone} &= \tone\tone, \quad \widehat p_{\zero\tone\tone} = \zero\tone\tone - \frac 13 \tone\tone,\quad \widehat p_{\tone \zero \tone} = \tone\zero\tone + \frac 12 \zero\tone\tone - \frac 12 \tone\tone, \quad \widehat p_{\tone\tone\tone} = \tone\tone\tone .
\end{align*}

While we have described the orthogonal basis $\{\widehat p_w \mid w \in [d]^\circ_\zero\}$ in terms of a basis on $V$, it comes from an intrinsic orthogonalisation map. Indeed, we have decomposed
\begin{equation}\label{eq:directsum}
\widehat{\mathrm{Sh}}{}^\circ(\widetilde V) = \bigoplus_{k = 0}^\infty W^k, \qquad W^k = \bigoplus_{i = 0}^\infty \underbrace{\bigoplus_{i_1 + \ldots + i_k = i} W^k_{i_1\ldots i_k}}_{|\, \cdot \,| = k + 2i, \ \# = k + i}
\end{equation}
with $W^k$ the space generated by all words with $k$ non-zero letters (ending in a non-zero letter) and $W^k_{i_1\ldots i_k}$ its subspace of generated by words of the form $\zero^{i_1} v_1 \zero^{i_2} \ldots \zero^{i_k} v_k$ with $v_l \in V$ and the multiplicities $i_l$ possibly zero. The first direct sum is orthogonal, since $W^k$ maps to the $k^\text{th}$ Wiener chaos, but the other two are not. For each fixed $k$, the blocks $W^k_{i_1\ldots i_k}$ are linearly ordered according to the order on sequences of integers given by $i_1 \dots i_k < i_1' \dots i_k'$ if $\sum_l i_l < \sum_l i_l'$ or if $\sum_l i_l = \sum_l i_l'$ and for some $h \leq k$, $i_l = i_l'$ for $l < h$ and $i_h < i_h'$. We then perform block-orthogonalisation $\widehat p$ according to this order on the blocks $W^k_{i_1\ldots i_k}$: $\widehat p_w$ is just the orthogonal projection of $w$ onto the direct complement of all preceding blocks in $W^k$. Denote the direct sum of these maps $\widehat p^V \colon \widehat{\mathrm{Sh}}{}^\circ(\widetilde V) \to \widehat{\mathrm{Sh}}{}^\circ(\widetilde V)$. A basis of $V$ additionally yields an orthogonal basis of each block $W^k_{i_1\ldots i_k}$, and orthogonality is preserved by $\widehat p$, yielding an orthogonal basis. Notice that only a basis of $V$, not a frame as in the more general setting \eqref{eq:p}, is necessary.

\begin{theorem}[Natural orthogonalisation of Wiener functionals]\label{thm:natural_time}
With $\mathcal F$ the sigma-algebra generated by $(B_t)_{t \in [0,T]}$, $Y \in L^2(\mathcal F)$ can be represented as the series with pairwise orthogonal summands, convergent in $L^2$
\[
Y = \sum_{w \in [d]^\circ_{\emph{\zero}}} \frac{\mathbb E[Y \langle \widehat p_w, \widehat S(\widetilde B)_{0,T}\rangle]}{(\widehat p_w, \widehat p_w)_{\qshuffle}} \langle \widehat p_w, \widehat S(\widetilde B)_{0,T} \rangle.
\]
Moreover, $\widehat p$ is a natural orthogonalisation according to the functor $V \mapsto \widehat{\mathrm{Sh}}{}^\circ(\widetilde V)$, i.e.\ for any injective isometry of Euclidean spaces $\varphi \colon U \to V$, the diagram
\[
\begin{tikzcd}
    \widehat{\mathrm{Sh}}{}^\circ(\widetilde U) \arrow[r, "\widehat p^U"] \arrow[d, "\widehat{\mathrm{Sh}}{}^\circ(\varphi)"] & \widehat{\mathrm{Sh}}{}^\circ(\widetilde U) \arrow[d, "\widehat{\mathrm{Sh}}{}^\circ(\varphi)"] \\
    \widehat{\mathrm{Sh}}{}^\circ(\widetilde V) \arrow[r, "\widehat p^V"] & \widehat{\mathrm{Sh}}{}^\circ(\widetilde V)
\end{tikzcd}
\]
(with $\widehat{\mathrm{Sh}}{}^\circ(\varphi) \coloneqq T(\one \oplus \varphi)|_{\widehat{\mathrm{Sh}}{}^\circ(\widetilde U)}$) commutes.
\end{theorem}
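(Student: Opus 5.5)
The proof has two parts, the $L^2$ expansion and naturality, and I would handle them separately.

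For the expansion, I need density of $\{\langle \rho, \widehat S(\widetilde B)_{0,T}\rangle : \rho \in \widehat{\mathrm{Sh}}{}^\circ(\widetilde V)\}$ in $L^2(\mathcal F)$. I would get it either from \Cref{thm:density} or from a direct Wiener chaos argument.

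\textbf{Route via \Cref{thm:density}.} Time-augmented Brownian motion with its Stratonovich lift has an expected signature with infinite radius of convergence (Fawcett's formula \eqref{eq:fawcett} applied to $\widetilde B$). Also $\mathcal G = \mathcal F$ by \Cref{rem:FG}. It\^o and Stratonovich signature coordinates span the same space of random variables, since each It\^o iterated integral is a finite linear combination of Stratonovich ones and vice versa. By \Cref{eq:nullspace}, every coordinate coincides a.s.\ with one indexed by $\widehat{\mathrm{Sh}}{}^\circ(\widetilde V)$.

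\textbf{Route via Wiener chaos.} The $k$-th chaos is spanned by $\int_{0<t_1<\dots<t_k<T} f(t_1,\ldots,t_k)\,\dif B^{\alpha_1}_{t_1}\cdots \dif B^{\alpha_k}_{t_k}$ with $f$ polynomial, since polynomials are dense in $L^2$ of the simplex. Such a polynomial integrand is realised by It\^o signature words $\zero^{i_1}\alpha_1\cdots\zero^{i_k}\alpha_k$ together with the shuffle relation for the time coordinate. I would choose this route because it is short.

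Given density, \Cref{def:ehat} produces, inside each $\sim_\zero$ class, an orthogonal family spanning the same space as the words. The classes are mutually orthogonal by \eqref{eq:eitosig}. Hence $\{\widehat p_w\}$ is an orthogonal basis of a dense subspace, and every $(\widehat p_w,\widehat p_w)_{\qshuffle}>0$ by \Cref{eq:nullspace}. The series follows exactly as in \Cref{cor:series}.

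For naturality, I would use the intrinsic description \eqref{eq:directsum}. The map $\widehat p^V$ restricted to $W^k_{i_1\ldots i_k}$ is the orthogonal projection onto the complement of the preceding blocks within $W^k$. The key step is to show that $\widehat{\mathrm{Sh}}{}^\circ(\varphi)$ is an isometry for $(\cdot,\cdot)_{\qshuffle}$ that respects the block decomposition. It maps $W^k_{i_1\ldots i_k}(U)$ into $W^k_{i_1\ldots i_k}(V)$, because it fixes $\zero$ and acts on letters.

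For the isometry, I would use \eqref{eq:zeroOne} extended multilinearly. For $u=\zero^{i_1}u_1\cdots\zero^{i_k}u_k$ and $v=\zero^{j_1}v_1\cdots\zero^{j_k}v_k$, the inner product equals $\prod_l\langle u_l,v_l\rangle$ times the combinatorial factor in \eqref{eq:zeroOne}. This holds because the quasi-shuffle bracket contracts letters using $\varrho$, which is the Euclidean inner product, and only full contractions survive the expectation. Words with different numbers of nonzero letters are orthogonal. Since $\varphi$ preserves inner products, $\widehat{\mathrm{Sh}}{}^\circ(\varphi)$ is an isometry.

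The remaining work is to show that the projection commutes with the embedding. Let $P$ be the span of the preceding blocks. The image of $U$-blocks lies in the corresponding $V$-blocks. I must show that for $x\in W^k_{\mathbf i}(U)$, the $V$-projection of $\varphi x$ onto $P(V)$ already lies in $\varphi P(U)$. Here I decompose $V=\varphi U\oplus(\varphi U)^\perp$. Then $W^k_{\mathbf j}(V)$ splits orthogonally into the summand built on $\varphi U$ and summands containing at least one letter from $(\varphi U)^\perp$. By the product formula above, the latter are orthogonal to everything built on $\varphi U$, so the projection stays within $\varphi(\cdots U)$.

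I expect this orthogonal splitting to be the main point to verify carefully, but given the factorised formula for the inner product it should be direct. The density step is routine given the earlier results.
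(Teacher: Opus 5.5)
Your proposal is correct and follows essentially the same route as the paper. The expansion comes from $L^2$-density combined with \Cref{eq:nullspace} and the argument of \Cref{cor:series}; the paper cites \Cref{thm:density} or \cite{lpdensity} for density, and your preferred Wiener chaos route is the argument of \Cref{thm:density_brownian}. Naturality comes from $\widehat{\mathrm{Sh}}{}^\circ(\varphi)$ preserving the blocks $W^k_{i_1\ldots i_k}$ and the inner product. Your explicit orthogonal splitting $V=\varphi U\oplus(\varphi U)^\perp$ makes precise what the paper compresses into ``the whole Gram--Schmidt process is thus preserved'': the projection onto preceding blocks, computed in $V$, has no component outside the image of $U$.
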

\begin{proof}
The first part follows directly from \Cref{cor:series} (or rather the same proof; since \Cref{thm:density} or \cite{lpdensity} apply to time-augmented Brownian motion), \eqref{eq:nullspace} and the preceding discussion. 
Let $\Phi \coloneqq \widehat{\mathrm{Sh}}{}^\circ(\varphi)$. It follows from the preceding discussion that the decomposition into blocks $W^k_{i_1\ldots i_k}$ is preserved by $\Phi$, and moreover $\Phi$ acts on $W^k_{i_1\ldots i_k} = W^k_{i_1\ldots i_k}(U) \cong U^{\otimes k}$ by $\varphi^{\otimes k}$. Since $\widehat p$ operates on each block $W^k$ independently, we have reduced the problem to proving, for each $i,k \geq 0$, commutation of the squares
\[
\begin{tikzcd}
\bigoplus_{i_1 + \ldots + i_k = i} U^{\otimes k} \arrow[r, "\widehat p^U"] \arrow[d,"\varphi^{\otimes k}"] & \bigoplus_{i_1 + \ldots + i_k = i} U^{\otimes k} \arrow[d,"\varphi^{\otimes k}"] \\
\bigoplus_{i_1 + \ldots + i_k = i} V^{\otimes k} \arrow[r, "\widehat p^V"] & \bigoplus_{i_1 + \ldots + i_k = i} V^{\otimes k}.
\end{tikzcd}
\]
This follows from the fact that $\varphi$ preserves the bracket in \eqref{eq:qshprod}, i.e.\ $[\varphi(u),\varphi(u')] = [u,u']$ and therefore preserves inner products in $W^k$:
\[
(\zero^{i_1} \varphi(u_1) \zero^{i_2} \ldots \zero^{i_k} \varphi(u_k), \zero^{j_1} \varphi(u_1') \zero^{j_2} \ldots \zero^{j_k} \varphi(u_k')) = (\zero^{i_1} u_1 \zero^{i_2} \ldots \zero^{i_k} u_k, \zero^{j_1} u_1' \zero^{j_2} \ldots \zero^{j_k} u_k').
\]
The whole Gram-Schmidt process \Cref{def:ehat} is thus preserved and the claim follows.
\end{proof}

\begin{remark}[Stratonovich orthogonalisation]\label{rem:strat}
A similar orthogonalisation map can be obtained when working with the Stratonovich signatures. We only sketch this briefly, since it is easier to interpret terms in the It\^o setting. Recall that the Hu-Meyer formulae \cite{humeyer} (later generalised to the Hoffman's exponential \cite{hoff} in a different context) establish natural algebra isomorphisms
\[
\exp^V \colon \mathrm{Sh}(\widetilde V) \to \widehat{\mathrm{Sh}}(\widetilde V), \qquad \log^V \colon \widehat{\mathrm{Sh}}(\widetilde V) \to \mathrm{Sh}(\widetilde V)
\]
inverses of each other. A quick inspection of these maps reveals that they respect the direct sum of \Cref{eq:nullspace}, since the bracket does not act on the $\zero$ coordinate. We use $(\, \cdot \, , \, \cdot \,)_\shuffle$ to denote the inner product on $\mathrm{Sh}(\widetilde V)$ in which evaluation is against the Stratonovich signature $S(\widetilde W)_{0,T}$. Then
\[
(u, v)_\shuffle = \langle u \shuffle v , S(\widetilde B)_{0,T} \rangle = \langle \exp^V(u) \shuffle \exp^V(v) , \widehat S(\widetilde B)_{0,T} \rangle = (\exp^V(u), \exp^V(v))_{\qshuffle},
\]
i.e.\ $\exp^V$ (and $\log^V$) define isometries. Therefore, setting
\[
q^V \colon \mathrm{Sh}(\widetilde V) \to \mathrm{Sh}(\widetilde V), \qquad q^V = \exp^V \circ \widehat p^V \circ \log^V
\]
all properties of $\widehat p$ carry over to the Stratonovich-shuffle case.
\end{remark}

\begin{remark}[Hermite and Legendre]
We observe that $\widehat p$ embeds both Hermite and Legendre polynomials: the former correspond to $n!\widehat p_{\tone^n}$, the latter (shifted to the interval $[0,T]$) to $n!\widehat p_{\zero^n}$.
\end{remark}

\begin{remark}[Comparison with Wiener chaos]
Since orthogonalisation map $\widehat p$ works largely because of orthogonality of Wiener chaos, it is interesting to probe the link with this decomposition further. Recall that, letting $\mathcal H = L^2([0,T], \mathbb R^d) = L^2([0,T] \times [d])$ there exists an isometry given by multiple Wiener integration (see e.g.\ \cite{NP12})
\begin{equation}\label{eq:wienerchaos}
\bigoplus_{n = 0}^\infty L^2_\mathrm{sym}(\underbrace{([0,T] \times [d]) \times \cdots \times ([0,T] \times [d])}_{n}) = \bigoplus_{n = 0}^\infty \mathcal H^{\odot n} \cong L^2(\mathcal F)
\end{equation}
where the subscript $\mathrm{sym}$ refers to symmetry of the functions in the $n$ factors. A straightforward way of obtaining an orthogonal basis of $L^2(\mathcal F)$ is that of fixing one of $L^2[0,T]$ (Fourier, Legendre, etc.), lifting it to one $\{\varphi_m\}_{m \in \mathbb N}$ of $L^2([0,T], \mathbb R^d)$, and of taking symmetric powers of it. This yields a representation
\begin{equation}\label{eq:nm-basis}
Y = \sum_{n = 0}^\infty \sum_{m = 0}^\infty\sum_{m_1 + \ldots + m_n = m} \lambda_{m_1,\ldots,m_n}^n \mathcal \int_{[0,T]^n} \varphi_{m_1,\alpha_1}(t_1) \cdots \varphi_{m_n,\alpha_n}(t_n) \dif W_{t_1}^{\alpha_1} \cdots \dif W_{t_n}^{\alpha_n} .
\end{equation}
with implicit summation on each $\alpha_l$ and coefficients $\lambda$ symmetric in the lower indices. This basis however depends on two parameters $n$ and $m$, and in order to obtain a one-parameter basis, one must make a choice of how to trade off truncation in $n$ with truncation in $m$. Notice, moreover, that quantities of interest in differential equations, such as the Lévy areas $\frac 12 \int_0^T (B^\alpha \dif B^\beta - B^\beta \dif B^\alpha)$, which are not the Wiener integral of symmetric kernels in the sense above, have infinite expansion w.r.t.\ \eqref{eq:nm-basis}. \Cref{def:ehat} is adapted to Wiener chaos but sidesteps these issues, thanks to the time-ordered nature of the signature.
\end{remark}

Pushing similar considerations a little further we are able to obtain a quick proof of \Cref{thm:density} in the case of time-augmented Brownian motion and $p = 2$. Recall that, even though $\mathcal F = \mathcal G$ by \Cref{rem:FG}, the proposition does not show that linear functions on $S(B)_{0,T}$ (without time-augmentation) is dense in $L^2(\mathcal F)$, and \Cref{thm:density} is needed for this.

\begin{proposition}[$L^2$-density of signature functionals]
\label{thm:density_brownian}
Linear functions on $S(\widetilde B)_{0,T}$ are dense in $L^2(\mathcal F)$.
\end{proposition}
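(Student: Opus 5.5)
By \Cref{rem:strat}, the Hu--Meyer map $\exp^V$ identifies linear functions on the Stratonovich signature $S(\widetilde B)_{0,T}$ with linear functions on the It\^o signature $\widehat S(\widetilde B)_{0,T}$. It therefore suffices to show that $\mathcal S \coloneqq \mathrm{span}\{\langle w, \widehat S(\widetilde B)_{0,T}\rangle \mid w \in [d]^\circ_\zero\}$ is dense in $L^2(\mathcal F)$. I would do this via the Wiener chaos isometry \eqref{eq:wienerchaos}. Since the chaoses $\mathcal H^{\odot n}$ are mutually orthogonal and span $L^2(\mathcal F)$, it is enough to approximate, for each fixed $n$, every element of the $n$-th chaos by elements of $\mathcal S$ lying in that same chaos.

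Write an element of the $n$-th chaos as an iterated It\^o integral over the simplex,
\[
n!\int_{0<t_1<\dots<t_n<T} f_{\alpha_1\ldots\alpha_n}(t_1,\ldots,t_n)\,\dif B^{\alpha_1}_{t_1}\cdots \dif B^{\alpha_n}_{t_n},
\]
where $f$ is now an arbitrary (non-symmetric) element of $L^2(\Delta_n^T)\otimes(\R^d)^{\otimes n}$ and $\Delta_n^T$ is the simplex. The restriction to the simplex is an isometry up to the factor $n!$. Next, expand the word $w = \zero^{i_1}\alpha_1\zero^{i_2}\cdots\zero^{i_n}\alpha_n$: integrating out the time letters shows that $\langle w, \widehat S(\widetilde B)_{0,T}\rangle$ is the iterated It\^o integral of the kernel
\[
\frac{t_1^{i_1}}{i_1!}\,\frac{(t_2-t_1)^{i_2}}{i_2!}\cdots\frac{(t_n-t_{n-1})^{i_n}}{i_n!}\, e_{\alpha_1}\otimes\cdots\otimes e_{\alpha_n}.
\]
The triangular change of variables $s_k = t_k - t_{k-1}$ is polynomial with polynomial inverse. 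Hence the span of these kernels, over all $i_1,\dots,i_n \ge 0$, is exactly $\R[t_1,\ldots,t_n]\otimes(\R^d)^{\otimes n}$ restricted to $\Delta_n^T$.

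The main step is that polynomials are dense in $L^2(\Delta_n^T)$. This holds because $\Delta_n^T$ is compact, so Stone--Weierstrass gives uniform density of polynomials in $C(\Delta_n^T)$, and continuous functions are dense in $L^2$ of a finite measure (\Cref{lem:bc_dense}). By the It\^o isometry, $L^2$-approximation of kernels on the simplex transfers to $L^2(\mathcal F)$-approximation of the corresponding iterated integrals. This gives density of $\mathcal S$ in each chaos, and hence in $L^2(\mathcal F)$.

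The only delicate point is the bookkeeping in the second paragraph: verifying that the kernel of a word with interleaved zeros is as claimed, and that iterated It\^o integrals on the simplex with non-symmetric kernels exhaust the $n$-th chaos. The latter is standard, since symmetrising the kernel recovers the symmetric representation in \eqref{eq:wienerchaos}.
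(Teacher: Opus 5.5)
Your proof is correct and has the same overall structure as the paper's. Both arguments reduce to the It\^o signature via the Hu--Meyer conversion, use the Wiener chaos decomposition, approximate kernels by polynomials, and identify polynomial kernels with iterated integrals of words $\zero^{i_1}\alpha_1\cdots\zero^{i_n}\alpha_n$.

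The routes differ only in that last identification. The paper works with monomial kernels $t_1^{m_1}\cdots t_n^{m_n}$ on the cube $[0,T]^n$. It writes each $t^m/m!$ as the volume of a simplex $\Delta^m[0,t]$, splits the cube into $n!$ permuted simplices, and inductively decomposes products of simplices into disjoint unions of simplices via shuffles. This turns each monomial-kernel chaos element into an explicit combination of words.

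You go the opposite way. You work on the simplex from the start with non-symmetric kernels and compute the kernel of each word directly as $\prod_k (t_k-t_{k-1})^{i_k}/i_k!$. In the variables $s_k=t_k-t_{k-1}$, a unimodular linear change of coordinates, these kernels are up to constants exactly the monomials in $s$, so their span is all polynomials on $\Delta_n^T$. Density then follows from Stone--Weierstrass on the simplex rather than from one-variable density in $L^2[0,T]$.

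What each route buys:
\begin{itemize}
\item Your version avoids the symmetrisation over permuted simplices and the combinatorial decomposition of products of simplices. It also makes transparent that the words with $n$ non-zero letters span exactly the polynomial kernels of the $n$-th chaos.
\item The paper's version yields explicit shuffle-type formulas expressing monomial-kernel chaos elements in It\^o signature coordinates.
\end{itemize}

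Both arguments rely on the same stochastic Fubini exchange to move the time integrals into the kernel, which you correctly flag as the one point needing care.
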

\begin{proof}
By the It\^o-Stratonovich conversion formulae for iterated integrals of Brownian motion with time \cite{humeyer}, we may equivalently prove the statement for linear functions on $\widehat S(\overline B)_{0,T}$. Since polynomials are dense in $L^2[0,T]$, by \eqref{eq:wienerchaos} it suffices to show that for any $n, m_1,\ldots,m_n \in \mathbb N$ and any $\alpha_1,\ldots,\alpha_n \in [d]$ the multiple Wiener integral
\[
\int_{[0,T]^n} t_1^{m_1} \cdots t_n^{m_n} \dif W_{t_1}^{\alpha_1} \cdots \dif W_{t_n}^{\alpha_n}
\]
can be expressed as a linear function of $\widehat S(\widetilde B)_{0,T}$. We express it as, up to a factor of $m_1! \cdots m_n!$
\begin{align*}
&\int_{[0,T]^n} \bigg(\int_{\Delta^{m_1}[0,t_1]} \dif s_{1,1} \cdots \dif s_{1,m_1} \bigg) \cdots \bigg(\int_{\Delta^{m_n}[0,t_n]} \dif s_{n,1} \cdots \dif s_{n,m_n} \bigg) \dif W_{t_1}^{\alpha_1} \cdots \dif W_{t_n}^{\alpha_n} \\
={}&\int_{[0,T]^n \ltimes (\Delta^{m_1}[0,t_1] \times \cdots \times \Delta^{m_n}[0,t_n])} \dif s_{1,1} \cdots \dif s_{1,m_1}  \cdots \dif s_{n,1} \cdots \dif s_{n,m_n} \dif W_{t_1}^{\alpha_1} \cdots \dif W_{t_n}^{\alpha_n}
\end{align*}
where for a family of sets $(B_a)_{a \in A}$ we denote $A \ltimes B_a \coloneqq \{(a,b) \mid a \in A, b \in B_a\}$. Writing $[0,T]^n = \bigsqcup_{\sigma \in \mathfrak S_n} \sigma_*\Delta^n[0,T]$, we reduce the problem of expressing (up to reordering) $\Delta^n[0,T] \ltimes (\Delta^{m_1}[0,t_1] \times \cdots \times \Delta^{m_n}[0,t_n])$ as a disjoint union of simplices over $[0,T]$. By treating the two-factor case
\begin{align*}
&\Delta^i[0,a] \times \Delta^j[0,b]\\
={}&\bigsqcup_{k = 1}^j \{0 < u_1 < \ldots < u_i < a\} \times \{ 0 < u_{i+1} < \ldots < u_{i+k} < a < u_{i+k+1} < \ldots < u_{i+j} < b\} \\
={}&\bigsqcup_{k = 1}^j\bigsqcup_{\sigma \in \mathrm{Sh}(i,k)} \{0 < u_{\sigma(1)} < \ldots < u_{\sigma(i+k)} < a < u_{\sigma(i+k+1)} < \ldots < u_{\sigma(i+j)} < b\}
\end{align*}
we reduce, by induction, to sets of the form
\[
\Delta^n[0,T] \ltimes (\Delta^{i_1}[0,t_1] \times \Delta^{i_2}[t_1,t_2] \times \cdots \times \Delta^{i_n}[t_{n-1}, t_n]) = \Delta^{n + (i_1 + \ldots + i_n)}[0,T]
\]
and the proof is complete.
\end{proof}

\subsection{Numerical experiments}\label{subsec:numerics}

In this subsection we perform numerical experiments on the orthogonalised It\^o signature developed in the previous. The first necessary step is to obtain the Gram-Schmidt-orthogonalised It\^o-signature features, up to a given inhomogeneous degree (that is with the drift coordinate counting double). We use the package \texttt{Signax} \cite{signax} for computing signatures and \cite{reizenstein} for algebraic manipulation on the tensor algebra. Since \texttt{Signax} (as well as all other packages that compute signatures), when called on Brownian motion, compute the Stratonovich signature, we must first convert to It\^o form. This is done symbolically by implementing the Hoffman logarithm (see \Cref{rem:strat}). Note that, on finitely discretised time series, this is not equivalent to computing It\^o integrals as left-endpoint Riemann sums, but they are equivalent in the limit of vanishing mesh size. In order to perform the Gram-Schmidt orthogonalisation, the inner product $(u, v)_{\qshuffle}$ \eqref{eq:innerquasi} is computed on binary words as in \eqref{eq:zeroOne}. The Gram-Schmidt basis \eqref{def:ehat} is then calculated, still in the binary case, and only then \say{mapped on} to the case of general $d$ as described in \Cref{subsec:ito}. This is an expression of the naturality of the orthogonal basis \Cref{thm:natural_time} and avoids the explicit computation of the (quasi-)shuffle product or Monte Carlo evaluation which would normally be necessary for \eqref{eq:shuffle}, significantly speeding up computation of the inner product \eqref{eq:qshuffle}. For further details, we refer to our implementation \cite{orthsig} and its documentation. We are able to check that the orthogonalisation achieves the required goal, see \Cref{fig:heatmap}.
\begin{figure}[h!]
    \centering
    \resizebox{\textwidth}{!}{%
        \mbox{%
            \includegraphics[height=3cm]{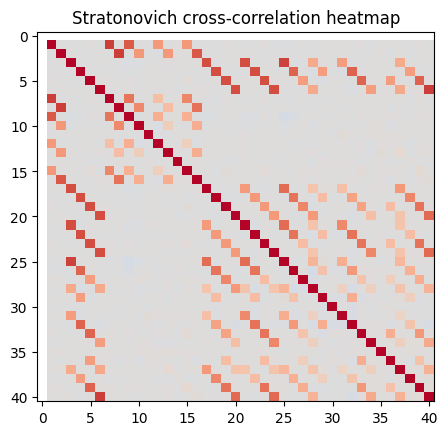}%
            \hspace{0.5em}%
            \includegraphics[height=3cm]{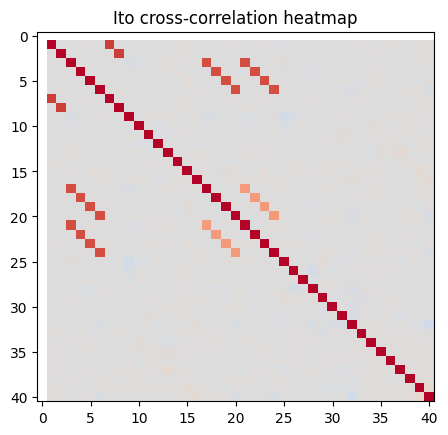}%
            \hspace{0.5em}%
            \includegraphics[height=3cm]{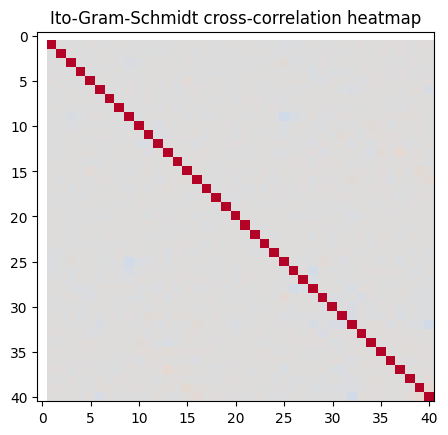}%
            \hspace{0.5em}%
            \includegraphics[height=3cm]{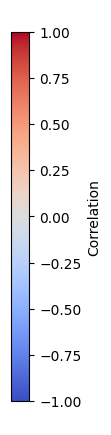}%
        }%
    }
    \caption{Comparison of correlation heatmaps for flattened signatures, on the subspace making $( \, \cdot \,, \, \cdot \,)_{\qshuffle}$ is non-degenerate \eqref{eq:nullspace}, computed over $100$k $2$-dimensional time-augmented sample Brownian paths, with $T = 1$ and $1$k grid points. The Stratonovich signature features are far from orthogonal, the It\^o ones are much sparser but still not orthogonal (because of residual correlations inside each Wiener chaos), and finally their Gram-Schmidt orthogonalisation is verified to be fully orthogonal (modulo numerical errors). This and similar checks can be performed with the notebook \cite[\texttt{orth\_checks.ipynb}]{orthsig}.}\label{fig:heatmap}
\end{figure}

We consider two related but distinct tasks. In \emph{functional expansion}, the function on paths is known and the task is to approximate it as a signature expansion. This can often be done independently of data; for example if the function is an SDE \eqref{eq:RDEs}, one can use a numerical method like \eqref{eq:taylor} iterated over many intervals. In \emph{functional regression}, the function on paths is not known or hard to expand analytically, rather we have access to i.i.d.\ input-output pairs, but the goal is similar. We will use orthogonal polynomials on Wiener space in a similar way for both, but the methodologies we compare with for each are distinct.

We consider a linear SDE
\begin{equation}
\dif Y^k = A^k_{\alpha i} Y^i \dif W^\alpha, \quad Y_0 = y_0,
\end{equation}
which is one of the rare cases for which the stochastic Taylor expansion \eqref{eq:taylor} converges on a single interval \cite[\S 4.2]{LCL}. We compare the performance of the Taylor scheme with that of evaluating the truncated series \Cref{thm:natural_time} on a sample of Brownian paths, see \Cref{fig:sde_L2_R2}. We observe convergence of both methods, with the Taylor method lagging behind the $L^2$ for low degrees, and catching up to the orthogonal expansion at higher degrees, especially for lower sample sizes (which only affects the orthogonal expansion). For a non-linear SDE whose solution lies in $L^2$, \Cref{thm:natural_time} still applies but examples such as \eqref{eq:legendre} show that one cannot expect convergence of the Taylor scheme on one interval, and thus iterating the method would be necessary; this however is not a signature expansion as usually intended in the machine learning literature.

\begin{figure}[htbp]
    \centering

    \begin{minipage}[t]{0.48\textwidth}
        \centering
        \includegraphics[width=\linewidth]{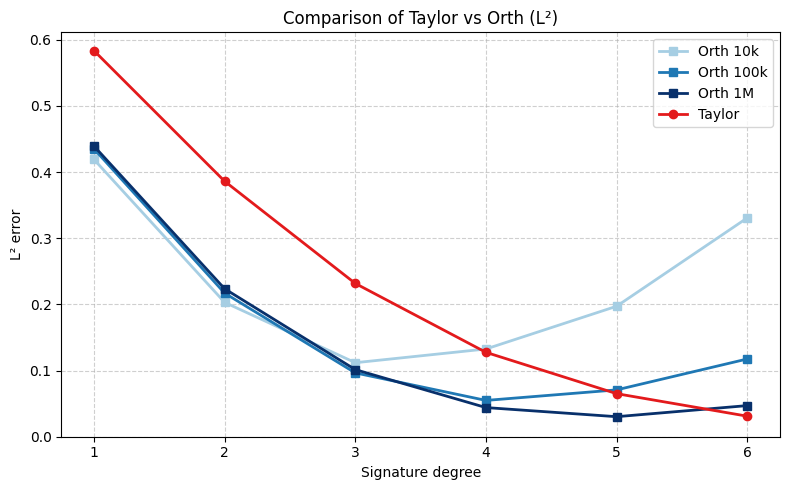}
    \end{minipage}
    \hfill
    \begin{minipage}[t]{0.48\textwidth}
        \centering
        \includegraphics[width=\linewidth]{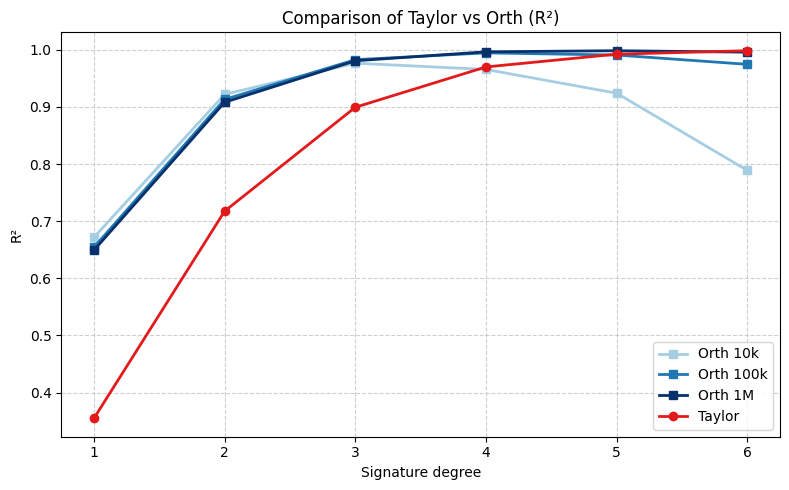}
    \end{minipage}

    \caption{Comparison of Taylor and Orth models: out-of-sample L$^2$ error (left) and coefficient of determination (R$^2$, right) across different dataset sizes. The Brownian motion is taken to have dimension $d = 2$, $Y$ is scalar, and errors/R$^2$ are averaged over $10$ random choices of the matrix $A$ normalised to have Euclidean norm $1$.}
    \label{fig:sde_L2_R2}
\end{figure}

In the next example we consider the a scalar Black Scholes model, and learn the two functions: an at-the-money call
option and a lookback option, i.e.\ just the maximum of the path (a difficult case for functional expansions
\cite[Example 2]{dupire19}), see \Cref{fig:sde_R2_BS}. Note that unlike in the literature on applications of signatures to finance (e.g.\ \cite{arribas2018derivatives}), we are learning/expanding the payoff in terms of the underlying Brownian motion, not the price path. For the latter, we would need to orthogonalise the signature of paths drawn from geometric Brownian motion. We observe convergence of both methods, both in the sample size and truncation level, although the Monte Carlo estimator for the orthogonal expansion, i.e.\ the truncated series in \Cref{thm:natural_time}, appears to converge more slowly than the OLS estimator \eqref{eq:OLS} in terms of the sample size, especially at higher degree. Notice that these are two distinct estimators for the same quantity, $\Pi_N Y$. In particular, for the orthogonal expansion estimator the difference between in- and out-of sample error is less marked, since it is not the solution to a data-dependent optimisation problem.

\begin{figure}[htbp]
    \centering

    \begin{minipage}[t]{0.48\textwidth}
        \centering
        \includegraphics[width=\linewidth]{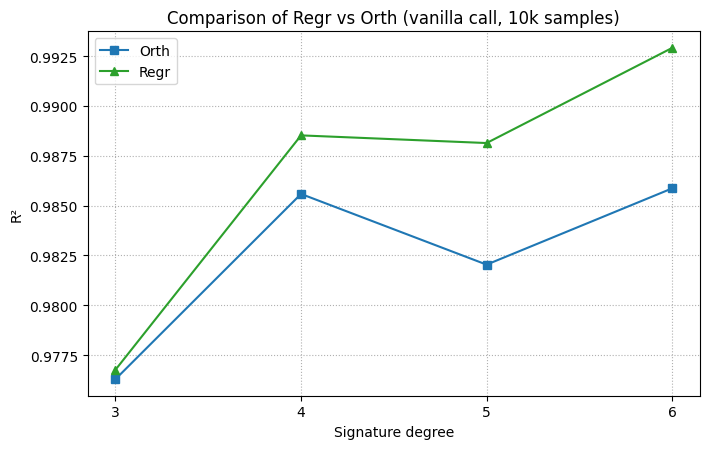}
    \end{minipage}
    \hfill
    \begin{minipage}[t]{0.48\textwidth}
        \centering
        \includegraphics[width=\linewidth]{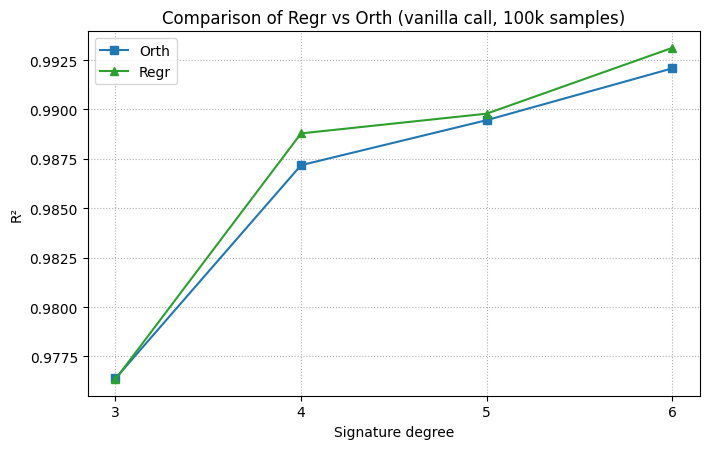}
    \end{minipage}

        \begin{minipage}[t]{0.48\textwidth}
        \centering
        \includegraphics[width=\linewidth]{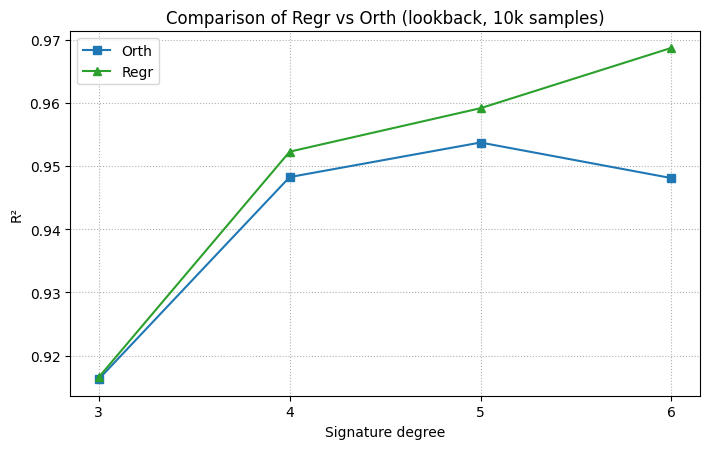}
    \end{minipage}
    \hfill
    \begin{minipage}[t]{0.48\textwidth}
        \centering
        \includegraphics[width=\linewidth]{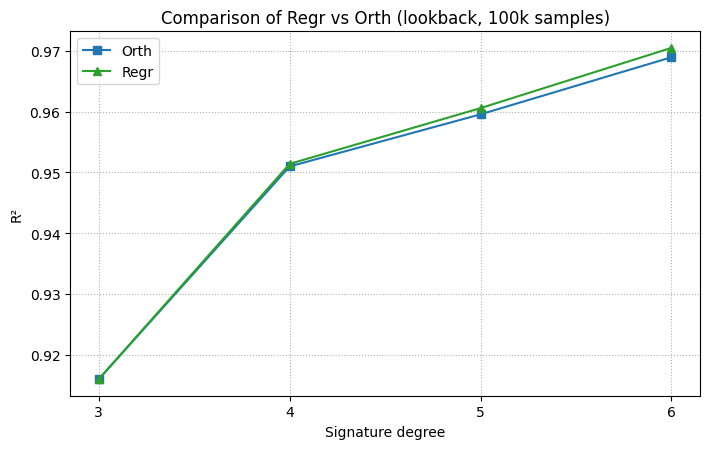}
    \end{minipage}

    \caption{Comparison of coefficients of determination for OLS regression (Regr) on the truncated signature (with non-orthogonal coordinates, out-of-sample) and orthogonal signature expansion (Orth). The scalar Black Scholes model has parameters $\sigma = 0.2$, $\mu = 0$, $S_0 = 1$, and the call option is struck at $K = 1$ at time $T = 1$. We observed worse performance for both models for OTM options, and for ITM options Orth was performing worse than Regr.}
    \label{fig:sde_R2_BS}
\end{figure}

Evaluating the expansion \Cref{thm:natural_time} is linear in both sample size $M$ and number of features $D = \frac{d^{N+1} - 1}{d-1}$, while performing the matrix inversion in \eqref{eq:OLS} has complexity which is cubic in either $K$ or $M$ depending on whether it is evaluated in the primal or dual formulation. In practice, the bottleneck lies in the overhead costs of computing the signatures of large samples of paths. Also, when sampling from a fixed measure, the data matrix could be computed once and for all and re-used across many different problems. Even when performing regression, however, using the orthogonal basis rather than the ordinary basis has advantages. Orthogonality means coefficients can be learnt incrementally in degree, unlike for non-orthogonal features. It can also improve conditioning, see \cite{tian} for a comparison in the case of polynomials (ill conditioning of Vandermonde matrices is a well known phenomenon \cite{pan16}). Finally, expanding a whole collection of payoffs as in \Cref{thm:natural_time} has the benefit that covariances can be more efficiently estimated by truncating the series
\begin{align*}
\mathbb E[Y_1 Y_2] &= \sum_{w \in [d]^\circ_{\zero}} \frac{\mathbb E[Y_1 \langle \widehat p_w, \widehat S(\widetilde B)_{0,T}\rangle] \mathbb E[Y_2 \langle \widehat p_w, \widehat S(\widetilde B)_{0,T}\rangle]}{(\widehat p_w, \widehat p_w)_{\qshuffle}^2} \langle \widehat p_w, \widehat S(\widetilde B)_{0,T} \rangle^2 \\
&= \sum_{w \in [d]^\circ_{\zero}} \frac{\mathbb E[Y_1 \langle \widehat p_w, \widehat S(\widetilde B)_{0,T}\rangle] \mathbb E[Y_2 \langle \widehat p_w, \widehat S(\widetilde B)_{0,T}\rangle]}{(\widehat p_w, \widehat p_w)_{\qshuffle}}
\end{align*}
This could have applications, for example, in stochastic portfolio theory \cite{CM24}.

\bibliographystyle{plain}
\bibliography{noncommutative_orthog_polys}

\appendix
\lstset{numbers=left, numberstyle=\tiny, stepnumber=2, numbersep=5pt, basicstyle=\small, tabsize=2,
xleftmargin=.1\textwidth, language=Mathematica}
\section{Linear systems}\label{ap:code}
We explain the procedure used to study the naturality of the block-orthogonalization map in the shuffle algebra.
The full code listings can be found on GitHub \cite{orth}.
Starting from Fawcett's formula in \eqref{eq:fawcett} and the inner product on words in \eqref{eq:innerProd}, we
implemented the expected signature and shuffle product in \texttt{Wolfram 14}, in order to enable symbolic computations.
For simplicity we take \(T=1\).
The idea with this approach is that since the natural basis is not orthogonal under \((\cdot,\cdot)\), inversion of the
Gram matrix yielding the projection at degree 5 is computationally intensive.

We begin by creating an abstract symbol \(w\) that acts as a placeholder for a word, and declaring it to obey a certain
set of rules with respect to the CircleTimes (\(\otimes\)) operator, which acts as an external tensor product.

\begin{lstlisting}[firstline=57, lastline=65, caption={Tensor product of words}]
ClearAll[CircleTimes]
SetAttributes[CircleTimes, {Flat, OneIdentity}]
CircleTimes[x___, y_Plus, z___] := CircleTimes[x, #, z]& /@ y
CircleTimes[x___, -a_, y___] := -CircleTimes[x, a, y]
CircleTimes[x___, a_, y_] := a CircleTimes[x, y] /; FreeQ[a, w] 
CircleTimes[x_, a_, y___] := a CircleTimes[x, y] /; FreeQ[a, w]
CircleTimes[x___, a_ y_, z_] := a CircleTimes[x, y, z] /; FreeQ[a, w]
CircleTimes[x_, a_ y_, z___] := a CircleTimes[x, y, z] /; FreeQ[a, w]
CircleTimes[p_ x_]:=p CircleTimes[x]/;FreeQ[p, w]
\end{lstlisting}

We then implement the concatenation operator Conc, which maps \(T(V)\otimes T(V)\to T(V)\) \emph{linearly}:

\begin{lstlisting}[firstline=67, lastline=73, caption={Concatenation of words}]
ClearAll[Conc]
SetAttributes[Conc, OneIdentity]
Conc[0] = 0;
Conc[p_ x_] := p Conc[x] /; FreeQ[p, w]
Conc[x_Plus] := Plus[Conc/@x]
Conc[CircleTimes[x__w]] := w@@Delete[0] /@ Level[{x},1]
Conc[CircleTimes[x_]]:=x
\end{lstlisting}

Finally, the shuffle product on w[...] symbols is implemented as a linear map \(T(V)\otimes T(V)\to T(V)\)
recursively:

\begin{lstlisting}[firstline=75, lastline=83, caption={Shuffle product of words}, breaklines]
ClearAll[Shuf]
SetAttributes[Shuf, OneIdentity]
Shuf[x_w]:=x
Shuf[p_ x_]:=p Shuf[x]/;FreeQ[p,w]
Shuf[x_Plus]:=Plus[Shuf/@x]
Shuf[x_w\[CircleTimes]w[]]:=x
Shuf[w[]\[CircleTimes]x_w]:=x
Shuf[w[u__]\[CircleTimes]w[v__]]:=Conc[Shuf[w[u]\[CircleTimes]w@@Drop[{v},-1]]\[CircleTimes]w[{v}[[-1]]]]+Conc[Shuf[w@@Drop[{u},-1]\[CircleTimes]w[v]]\[CircleTimes]w[{u}[[-1]]]]
Shuf[x__w\[CircleTimes]y_w]:=Shuf[Shuf[CircleTimes[x]]\[CircleTimes]y]
\end{lstlisting}
The last relation enforces associativity.

Next, we implement \eqref{eq:fawcett} on pure words w[...] and extend by linearity.

\begin{lstlisting}[firstline=161, lastline=169, caption={Expected signature of time-augmented Brownian motion}]
ClearAll[ESig]
ESig[w[]]=1;
ESig[w[x__]]:=Block[{z=Cases[{x},0],l=DeleteCases[{x},0]}, 
	With[{n=Length[l], m=Length[z]},
		If[EvenQ[n],2^(-n/2)/(n/2+m)! Times@@\[Delta]@@@Partition[l,2],0]
	]
]
ESig[x_Plus]:=Plus[ESig/@x]
ESig[p_ x_]:=p ESig[x]/;FreeQ[p,w]
\end{lstlisting}
Here, the symbol \textbackslash[Delta] (\(\delta\)) is subject to the symmetry rule
\begin{lstlisting}[firstline=158, lastline=159]
ClearAll[\[Delta]]
\[Delta]/:\[Delta][b_, a_] := \[Delta][a, b] /; b>a
\end{lstlisting}

Finally this induces an inner product on words by \eqref{eq:innerProd}:

\begin{lstlisting}[firstline=171, lastline=176, caption={Inner product on the shuffle algebra}]
ClearAll[ip]
ip[x_w,y_w]:=ESig[Shuf[x\[CircleTimes]y]]
ip[x_Plus,y_]:=Plus[ip[#,y]&/@x]
ip[x_,y_Plus]:=Plus[ip[x,#]&/@y]
ip[p_ x_,y_]:=p ip[x,y]/;FreeQ[p,w]
ip[x_,p_ y_]:=p ip[x,y]/;FreeQ[p,w]
\end{lstlisting}

In the next step, we use these functions to build the Ansatz in \Cref{sec:nonexist}, where we also include all crossing
partitions.
\begin{lstlisting}[breaklines,caption={Generation of the Ansatz for the orthogonalization map}]
Pairings[0] = 0; Parings[1] = 0; 
Pairings[n_ /; n > 1] := Select[Combinatorica`SetPartitions[n], AllTrue[Length[#] <= 2 &]]
vars[n_ /; n > 0] := Join[Array[a, Length[Pairings[n]] - 1], {1}]
Ansatz[n_ /; n > 0] := vars[n] . Conc /@ Map[CircleTimes @@ # &, Map[If[Length[#] == 1, w @@ #, \[Delta] @@ # w[]] &, Pairings[n], {2}], {1}]
\end{lstlisting}

Thus, the command Ansatz[3] generates the following output:
\begin{lstlisting}[breaklines, mathescape]
w[1, 2, 3] + a[2]w[3]$\delta$[1, 2] + a[3]w[2]$\delta$[1, 3] + a[1]w[1]$\delta$[2, 3]
\end{lstlisting}


We then generate the orthogonality relations:
\begin{lstlisting}[breaklines, mathescape]
Lower[n_ /; EvenQ[n]] := With[{pol = Ansatz[n]}, Table[ESig[Shuf[pol\[CircleTimes]w @@ (n + Range[2 k])]], {k, 0, n/2 - 1}]]
Lower[n_ /; OddQ[n]] := With[{pol = Ansatz[n]}, Table[ESig[Shuf[pol\[CircleTimes]w @@ (n + Range[2 k + 1])]], {k, 0, (n - 1)/2 - 1}]]
$\delta$vars[n_] := DeleteDuplicates[Cases[Lower[n], _$\delta$, Infinity]];
\end{lstlisting}

The command Lower[n] generates a systems of equations that have to be solved for in the \(a\) variables.
Since these should hold for any choice of letters and not just w[1,2,3,4], we use the SolveAlways instruction.
\begin{lstlisting}
Sol[n_] := SolveAlways[# == 0 & /@ Lower[n], \[Delta]vars[n]]
\end{lstlisting}

For example, running Sol[3] yields:
\begin{lstlisting}
{{a[1] -> -(1/4), a[2] -> -(1/4), a[3] -> 0}}
\end{lstlisting}
which is precisely the solution in \eqref{eq:diagEx}.
Note that a[3], i.e., the variable correspoding to the pairing \(\{\{1,3\},\{2\}\}\) is set to zero.

In order to check if the system has a solution, we may look at the matrix corresponding to the system of linear
equations generated by Lower[n], say \(A\), and check if the equality \(\rank(A)=\rank([A\mid b])\) holds, where \(b\) denotes
the vector of constant terms.
If the equality does not hold, it means that the system is inconsistent and therefore has no solution. In Wolfram this
is implemented by the following functions:
\begin{lstlisting}[breaklines, mathescape]
ClearAll[PreCoefs, Coefs, AugCoefs]
PreCoefs[n_] := PreCoefs[n] = CoefficientArrays[# == 0 & /@ Flatten[Values@CoefficientRules[#, $\delta$vars[n]] & /@ Lower[n]], Drop[vars[n], -1]] // Normal
Coefs[n_] := Coefs[n] = PreCoefs[n][[2]]
AugCoefs[n_] := Transpose[Insert[Transpose[Coefs[n]], -PreCoefs[n][[1]], -1]]
\end{lstlisting}
giving \(A=\text{Coefs[n]}\) and \([A\mid b]=\text{AugCoefs[n]}\) for a given word length \(n\).

 For the case of interest, namely \(n=5\), the matrix
has \(\rank(A)=25\) and \(\rank([A\mid b])=26\).
A subset of contradicting equations may be obtained by inspecting this matrix or by computing a certificate by computing
a basis of its column null space by solving the system \(y^\top A=0\), and then looking for a vector such that \(y^\top
b\neq 0\). The nonzero
entries of such a vector determine an inconsistent set of equations. This is essentially a version of Farkas' lemma.

In the same way it may be checked that for \(n=6,7\) the ranks of the corresponding \(A\) are \(75\) and \(231\),
respectively, while the ranks of the augmented matrices are \(76\) and \(232\).
\end{document}